\tikzstyle{VertexStyle} = [shape = circle, draw, fill]
\tikzset{pre/.style={-}}    
\tikzstyle{every node}=[circle, inner sep=0pt, minimum width=4pt]
\newtheorem{theorem}{Theorem}[section]
\newtheorem{lemma}[theorem]{Lemma}
\newtheorem{corollary}[theorem]{Corollary}
\newtheorem{proposition}[theorem]{Proposition}
\newtheorem{sublemma}{}[theorem]
\theoremstyle{definition}
\newtheorem{definition}{Definition}[section]
\newcommand{\dY}{$\Delta$-$Y$~exchange}
\newcommand{\Yd}{$Y$-$\Delta$~exchange}
\newcommand{\ba}{\backslash}
\DeclareMathOperator{\cl}{cl}
\DeclareMathOperator{\fcl}{fcl}
\DeclareMathOperator{\co}{co}
\DeclareMathOperator{\si}{si}
\newcommand{\cocl}{\cl^*}
\newcommand{\clstar}{\cl^{(*)}}
\newcommand{\seq}[1]{[#1]}
\newcommand{\unfortunate}{$N$-grounded}
\newcommand{\pspider}{elongated-quad $3$-separator}
\newcommand{\twisted}{skew-whiff $3$-separator}
\newcommand{\spikelike}{spike-like $3$-separator}
\newcommand{\tvamoslike}{twisted cube-like $3$-separator}
\newcommand{\psep}{particular $3$-separator}
\newcommand{\Psep}{Particular $3$-separator}
\newenvironment{slproof}[1][Subproof]{\begin{proof}[#1]}{\end{proof}}
\crefname{enumi}{}{}
\crefname{sublemma}{}{}
\Crefname{sublemma}{Claim}{Claims}
\begin{document}

\title[$N$-detachable pairs I: unveiling $X$]{$N$-detachable pairs in $3$-connected matroids I: unveiling $X$}

\thanks{The authors were supported by the New Zealand Marsden Fund.}

\author{Nick Brettell \and Geoff Whittle \and Alan Williams}
\address{Department of Computer Science, Durham University, United Kingdom}
\email{nbrettell@gmail.com}
\address{School of Mathematics and Statistics, Victoria University of Wellington, New Zealand}
\email{geoff.whittle@vuw.ac.nz}
\email{ayedwilliams@gmail.com}

\keywords{matroid representation, excluded minor, $3$-connected, Splitter Theorem}

\subjclass{05B35}
\date{\today}

\maketitle

\begin{abstract}
  Let $M$ be a $3$-connected matroid, and let $N$ be a $3$-connected minor of $M$.
  We say that a pair $\{x_1,x_2\} \subseteq E(M)$ is \emph{$N$-detachable} if 
  one of the matroids $M/x_1/x_2$ or $M \backslash x_1 \backslash x_2$ is both $3$-connected and has an $N$-minor.
  This is the first in a series of three papers where we describe the structures that arise when $M$ has no $N$-detachable pairs.
  In this paper, we prove that if no $N$-detachable pair can be found in $M$, then either
  $M$ has a $3$-separating set, which we call $X$, with certain strong structural properties, or
  $M$ has one of three particular $3$-separators that can appear in a matroid with no $N$-detachable pairs.
\end{abstract}

\section{Introduction}

Let $M$ be a $3$-connected matroid, and let $N$ be a $3$-connected minor of $M$.
We say that a pair $\{x_1,x_2\} \subseteq E(M)$ is \emph{$N$-detachable} if 
either 
\begin{enumerate}[label=\rm(\alph*)]
  \item $M/x_1/x_2$ is $3$-connected and has an $N$-minor, or
  \item $M \ba x_1 \ba x_2$ is $3$-connected and has an $N$-minor.
\end{enumerate}

This is the first in a series of three papers where we describe the structures that
arise when it is not possible to find an $N$-detachable pair. As a consequence, we show
that if $M$ has at least ten more elements than $N$, then
 either $M$ has an $N$-detachable pair after possibly performing a single
$\Delta$-$Y$ or $Y$-$\Delta$ exchange, or $M$ is essentially $N$ with a single 
spike attached. More precisely, we have the following theorem.
Formal definitions of $\Delta$-$Y$ exchange and ``spike-like $3$-separator'' are given 
in Section~\ref{presec}.

\begin{theorem}
  Let $M$ be a $3$-connected matroid, 
  and let $N$ be a $3$-connected minor of $M$ such that $|E(N)| \ge 4$ and $|E(M)|-|E(N)| \ge 10$.
  Then either
  \begin{enumerate}
    \item $M$ has an $N$-detachable pair,
    \item there is a matroid $M'$ obtained by performing a single $\Delta$-$Y$ or $Y$-$\Delta$ exchange on $M$ such that $M'$ has 
      an $N$-detachable pair, or
    \item there is a \spikelike~$P$ of $M$ such that at most one element of $E(M)-E(N)$ is not in $P$.
  \end{enumerate}
  \label{maintheorem}
\end{theorem}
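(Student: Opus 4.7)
The plan is to argue by contrapositive on (i) and (ii): assume that $M$ has no $N$-detachable pair, and that no matroid obtained from $M$ by a single \dY{} or \Yd{} has an $N$-detachable pair either, and deduce that (iii) must hold. The first step is to invoke the main structural result of this paper---the dichotomy summarised in the abstract---to conclude that $M$ either contains a $3$-separating set $X$ with strong structural properties, or $M$ contains one of three ``particular'' $3$-separators: a \spikelike{}, a \twisted{}, or a \tvamoslike{}.

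I would then handle each of the three particular $3$-separator cases. For a \spikelike{} $P$, the goal is to use the structure of $P$ together with the failure of (i) to show that every element of $E(M) - E(N)$, apart from possibly a single ``tip'', lies inside $P$; this is precisely conclusion (iii). For a \twisted{} or a \tvamoslike{}, I expect to exhibit a triangle or triad inside the $3$-separator on which a \dY{} or \Yd{} uncovers an $N$-detachable pair in the resulting matroid $M'$, contradicting the failure of (ii). Hence these last two cases never actually arise under our standing assumption.

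The remaining case, where $M$ has a $3$-separating set $X$ with the strong structural properties, is where the sequel papers in the series do their work. The plan is to use the detailed structural description of $X$---together with the slack provided by the hypothesis $|E(M)| - |E(N)| \ge 10$---to run a further case analysis, showing that either the interaction between $X$ and the $N$-minor is rich enough to produce an $N$-detachable pair (possibly after a \dYs{} or $Y$-$\Delta$ exchange), or $X$ itself collapses into a \spikelike{} that swallows all but one element of $E(M) - E(N)$, reducing us to the case already handled.

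The main obstacle, and the reason the argument is split across three papers, is the $X$ case: showing that a large well-structured $3$-separator must either yield a detachable pair or degenerate into a spike configuration. The difficulty is that an apparent candidate pair can simultaneously fail to preserve $3$-connectivity on one side and fail to preserve the $N$-minor on the other, so the combinatorial bookkeeping required to rule out every obstruction---using the full structural power of $X$---is considerable, and the bound of ten extra elements is there precisely to give enough room to carry this out.
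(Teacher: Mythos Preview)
The paper does not actually prove \cref{maintheorem} here: it is the announced outcome of a three-paper series, and the present paper only establishes the foundational dichotomy (\cref{foundation}). So there is no full proof here to compare against. That said, your outline contains two concrete errors worth flagging.

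First, you have misread the list of \psep s arising in this paper. The three that appear in \cref{foundation}(ii)(a) are the \emph{\pspider}, the \twisted, and the \tvamoslike---not the \spikelike. The \spikelike{} is not an outcome of this paper's dichotomy at all; it is the sole surviving obstruction in the final statement of \cref{maintheorem}, and it emerges only after the analysis of the subsequent papers.

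Second, your plan to dispose of the \twisted{} and the \tvamoslike{} by performing a \dY{} or \Yd{} on a triangle or triad inside them cannot work. By definition, none of these three $6$-element configurations contains a triangle or a triad (all their listed circuits and cocircuits have four or five elements), so there is nothing inside $P$ on which to perform such an exchange. More to the point, \cref{sec-problematic} explicitly constructs, for each of these three separators, a $3$-connected matroid $M$ with $E(M)-E(N)\subseteq P$ and no $N$-detachable pair, and in the \pspider\ case notes that this persists even after a \dY{} or \Yd. So these cases genuinely arise under the failure of (i) and (ii); they are not phantoms to be eliminated.

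What actually removes them from \cref{maintheorem} is the hypothesis $|E(M)|-|E(N)|\ge 10$. Each of the three separators has exactly six elements, and the constructions in \cref{sec-problematic} all have $|E(M)|-|E(N)|\le 6$. In the stronger theorem (gap $\ge 5$, stated in the third paper) these bounded-size separators appear as additional outcomes alongside the \spikelike; the point of raising the bound to $10$ is precisely to squeeze them out, leaving only the \spikelike{}---the one \psep{} of unbounded size---as outcome (iii).
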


In fact, we prove a stronger result that requires only that $|E(M)| - |E(N)| \ge 5$, but a handful of additional highly structured outcomes involving particular
$3$-separators of bounded size
arise. Describing these requires some preparation and we defer the full statement of the
stronger theorem until the third paper. 

These papers had their genesis in the Ph.D.\ thesis of Alan Williams~\cite{Williams2015} where
the problem of finding a detachable pair without worrying about keeping a minor was solved.
In essence, the strategy here follows the strategy of \cite{Williams2015}, but with the additional
responsibility of always taking care to keep the minor.

\subsection*{Background and motivation}
The proof of \cref{maintheorem} is long; much longer than we originally anticipated. 
Without a solid motivation, the case for going to the trouble of proving it is weak indeed.
In fact, the motivation is clear. It comes from a desire to find exact excluded-minor 
characterisations of certain minor-closed classes of representable matroids. 
What follows is a discussion of that motivation.

To some extent, progress in matroid theory
can be measured by success in finding excluded-minor characterisations of classes of
matroids. Results to date include Tutte's 
excluded-minor characterisation of binary and regular matroids \cite{tutte1958homotopy}; Bixby's
and, independently, Seymour's excluded-minor characterisation of ternary matroids \cite{bixby1979reid,seymour1979matroid};
Geelen, Gerards and Kapoor's excluded-minor characterisation of GF$(4)$-representable
matroids \cite{Geelen2000excluded}; and Hall, Mayhew and van Zwam's excluded-minor characterisation of the
near-regular matroids, that is, the matroids representable over all fields 
with at least three elements \cite{hall2011excluded}. 
Recently Geelen, Gerards and Whittle announced a proof
of Rota's Conjecture \cite{geelen2014solving}.  However, their techniques 
are extremal and give no insight into how one might find the exact list of
excluded minors for such classes. Extending the range of known exact excluded-minor theorems
for basic classes of matroids remains a problem of genuine interest and, indeed, a significant
challenge that tests the state of the art of techniques in matroid theory.

At this stage we need to note that regular matroids, and many other naturally arising
classes of representable matroids such as near-regular, 
dyadic and $\sqrt[6]{1}$-matroids \cite{whittle1997matroids},
can be described as classes of matroids representable
over an algebraic structure called a {\em partial field}. Of course, a field is an example
of a partial field, and classes of 
matroids representable over partial fields enjoy many of the properties that hold for matroids
representable over fields \cite{pendavingh2010lifts,pendavingh2010confinement,semple1996partial}. 

The immediate problem that looms large is that of finding the excluded minors for
the class of GF$(5)$-representable matroids. While this problem is beyond the range of
current techniques, a road map for an attack is outlined in \cite{pendavingh2010confinement}. In essence,
this road map reduces the problem to a finite sequence of problems of the following type.
We have the class of $\mathbb P$-representable matroids for some
fixed partial field $\mathbb P$. We have a $3$-connected matroid $N$ with the property that
every $\mathbb P$-representation of $N$ extends {\em uniquely} to a $\mathbb P$-representation
of any  $3$-connected $\mathbb P$-representable matroid  having $N$ as a minor. Such a matroid 
$N$ is
called a {\em strong stabilizer} for the class of $\mathbb P$-representable matroids. 
With these ingredients,
the goal is to bound the size of an excluded minor for the class of $\mathbb P$-representable
matroids having 
the strong stabilizer $N$ as a minor.
This situation is a more general version of the one that arises in the proof of the excluded-minor characterisation 
of GF$(4)$-representable matroids \cite{Geelen2000excluded}. There, the partial field is GF$(4)$
and the fixed minor $N$ is $U_{2,4}$.

For all of the classes described above we may attempt to generalise the 
strategy developed by Geelen, Gerards and Kapoor. We have an excluded minor $M$, with 
strong stabilizer $N$. We wish to bound the size of $M$ relative to $N$. 
Assume, for a contradiction, that $M$ is large relative to $N$.
It is proved in  \cite{Geelen2000excluded,stabilizers} that in this case, up
to duality, one can find a pair of elements $x,y\in E(M)$ such that $M\ba x$, $M\ba y$
and $M\ba x\ba y$ have $N$-minors and are $3$-connected up to series pairs. Finding such a 
pair is the first step in the proofs given in \cite{Geelen2000excluded,hall2011excluded}. But there is the rub.
The possible presence of series pairs leads to a major complication in the subsequent analysis.
The current proofs for the excluded-minor characterisations of both GF$(4)$-representable and
near-regular matroids could be significantly shortened if we could replace
``$3$-connected up to series pairs'' by ``$3$-connected'' in the initial step. That is
precisely what Theorem~\ref{maintheorem} enables us to do. 

If we are to succeed in finding the excluded minors for the classes of matroids 
that would lead to an exact solution to Rota's Conjecture for GF$(5)$, eliminating
unnecessary technicalities in the analyses becomes more than just a convenience; it
becomes absolutely essential. Eliminating unnecessary technicalities is what this paper achieves. It gives a feasible first
step on the way to an explicit characterisation of the excluded minors for these classes.

Note that outcomes (ii) and (iii) of Theorem~\ref{maintheorem} do not limit its applicability for
finding excluded-minor characterisations of matroids representable over partial fields.
It is known that excluded minors for a partial field are closed under the $\Delta$-$Y$ 
exchange \cite{oxley2000generalized}. Moreover, it is not difficult to show that excluded minors have bounded-size spike-like $3$-separators.

Theorem~\ref{maintheorem} has already been applied to make further progress on excluded-minor problems. 
For a fixed matroid $N$, a matroid $M$ is $N$-{\em fragile} if, for all $e\in E(M)$,
at most one of  $M\ba e$ and $M/e$ has an $N$-minor. It is shown in \cite{bcosw2019} that if
$M$ is a sufficiently large excluded minor for a partial field~$\mathbb P$ with a strong stabilizer $N$ as
a minor, then $M$ is $\Delta$-$Y$-equivalent to a matroid from which an $N$-fragile matroid can be obtained by deleting two elements. The proof of this result makes essential use of Theorem~\ref{maintheorem}.
In essence, this reduces the problem of bounding the size of
an excluded minor to understanding the class of $\mathbb P$-representable
$N$-fragile matroids. In general this appears to be a difficult problem, but progress
has been made for two genuinely interesting classes.

The Hydra-5 partial field captures the first layer of the hierarchy of 
GF$(5)$-representable partial fields mentioned above. The 2-regular partial field
has the property 2-regular-representable matroids are representable over all fields
of size at least four. It turns out that $U_{2,5}$ is a strong stabilizer for
both these partial fields. Moreover, the $U_{2,5}$-fragile matroids that are either 
2-regular or Hydra-5 representable are known \cite{clark2015structure}. Using this, it is 
possible to obtain 
an explicit bound for the size of an excluded minor for either of these partial fields \cite{ben}.
The current bound is too large to enable an exhaustive search for the excluded minors.
It is hoped that, in the not too distant future, we can refine this bound and 
obtain an explicit list of the excluded minors. There would be some satisfaction in 
achieving this. Finding the excluded minors would be an important first step on the
way to getting the excluded minors for GF$(5)$. Having said that, it seems likely that,
in the end, combinatorial explosion will make the full solution impossible. Nonetheless it
is interesting to know just where the boundary of infeasibility lies. 

On the other hand, obtaining the excluded minors for the 2-regular matroids would be a
significant step towards understanding the matroids representable over all fields of size
at least 4. We know that this class contains the class of 2-regular matroids.
The excluded
minors for 2-regular that belong to the class would be interesting indeed and it is likely
that they could be exploited to obtain an explicit description of the class of matroids
representable over all fields of size at least four. Indeed it would also be a significant
step towards understanding the classes that arise when one considers matroids representable
over sets of fields that contain GF$(4)$. This would generalise analogous results for
GF$(2)$ and GF$(3)$ \cite{tutte1958homotopy,whittle1997matroids}.

\subsection*{The structure of these papers}
We now outline the approach taken to prove \cref{maintheorem} in this series of papers.
As is traditional, we begin by recalling Seymour's Splitter Theorem.

\begin{theorem}[Seymour's Splitter Theorem~\cite{pds}]
  Let $M$ be a $3$-connected matroid that is not a wheel or a whirl, 
  and let $N$ be a $3$-connected proper minor of $M$.
  Then there exists an element $e \in E(M)$ such that $M/e$ or $M\ba e$ is $3$-connected and has an $N$-minor.
\end{theorem}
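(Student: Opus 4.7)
The plan is to induct on $|E(M)| - |E(N)|$, using Tutte's Wheels-and-Whirls Theorem as the fundamental tool. In the base case, $|E(M)| - |E(N)| = 1$, and the single element of $E(M) - E(N)$ must be removable to produce $N$; since $N$ is $3$-connected, this gives the conclusion.

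For the inductive step, suppose the result holds whenever the ground-set difference is smaller, and assume for contradiction that $M$ satisfies the hypotheses but no element works. Tutte's Wheels-and-Whirls Theorem gives an element $e \in E(M)$ with $M/e$ or $M \ba e$ being $3$-connected; by duality assume $M \ba e$ is $3$-connected. By the contradiction hypothesis, $M \ba e$ has no $N$-minor, so every expression of $N$ as a minor of $M$ contracts $e$. In particular $M/e$ has an $N$-minor, so by the contradiction hypothesis $M/e$ is not $3$-connected.

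Hence $M/e$ admits a $2$-separation $(A, B)$ with $|A|, |B| \ge 2$. The key structural step is to lift this to $M$: the partition $(A \cup \{e\}, B)$ (or $(A, B \cup \{e\})$) is a $3$-separation of $M$, and since $M$ itself is $3$-connected, $e$ must lie on a triangle or triad meeting both sides. Exchanging $e$ for a suitable neighbour in this triangle/triad produces a new element $e'$ to which the Wheels-and-Whirls machinery can be reapplied. Iterating, one builds a maximal fan containing $e$ whose interior elements are all ``bad'' in the above sense. One then argues that either some element encountered along the fan succeeds (contradicting our assumption, with the $N$-minor preserved because $N$ is $3$-connected and hence cannot be ``trapped'' by the entire fan), or the fan exhausts $E(M)$, in which case the alternating triangle/triad structure forces $M$ to be a wheel or a whirl, again a contradiction.

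The main obstacle I expect is the bookkeeping in the fan-extension argument: one must ensure that the $N$-minor is genuinely preserved as one slides along the fan, and that the various $3$-separations produced by successive applications of Wheels-and-Whirls are compatible. This is where the hypothesis that $N$ is itself $3$-connected becomes essential, ruling out the possibility that $N$ is ``spread across'' both sides of a $3$-separation in a way that blocks every candidate removal.
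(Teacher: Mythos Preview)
The paper does not provide its own proof of this statement: Seymour's Splitter Theorem is stated and cited from \cite{pds} as a known result, with no proof given. It serves as a foundational tool that the paper invokes, not something the authors establish.

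Your sketch outlines one of the standard approaches to proving the Splitter Theorem, and the high-level strategy is sound. However, what you have written is a plan rather than a proof, and you have correctly identified the genuine difficulty yourself: the fan-extension argument and the preservation of the $N$-minor along the fan require substantial care. In particular, the assertion that ``$e$ must lie on a triangle or triad meeting both sides'' is not automatic from the mere existence of a $2$-separation in $M/e$ and needs justification via Bixby's Lemma or a direct connectivity argument; and the claim that sliding along the fan eventually either succeeds or exhausts $E(M)$ requires a careful setup to ensure the fan actually grows at each step and that the $N$-minor survives each move. These are not insurmountable, but they constitute the real content of the proof, and your outline does not fill them in.
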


By Seymour's Splitter Theorem, we may assume, up to duality, that there is an element $d \in E(M)$ such that $M \ba d$ is $3$-connected and has an $N$-minor.
Let $d' \in E(M \ba d)$ such that $M \ba d \ba d'$ has an $N$-minor.
If $M \ba d \ba d'$ is $3$-connected, then $\{d,d'\}$ is an $N$-detachable pair.
On the other hand, if $M \ba d \ba d'$ is not $3$-connected, then $M \ba d \ba d'$ has a $2$-separation $(Y,Z)$ where 
the $N$-minor is primarily contained in one side of the $2$-separation, $Z$ say.

The main result of this first paper of the series shows that if $M$ were to have no $N$-detachable pairs, and $|Y| \ge 4$, then either $Y$ contains a $3$-separating set $X$ with a number of strong structural properties, or $Y \cup d$ contains one of the handful of \psep s that can appear in a matroid with no $N$-detachable pairs (we describe these \psep s in \cref{sec-problematic}).
On the journey towards the proof of this result, we prove a number of lemmas about the existence of $N$-detachable pairs when $M$ or $M^*$ contains one of a few special structures: namely, triangles (\cref{sectris}), a $U_{3,5}$-restriction (\cref{secplanes}), or a single-element extension of a flan (\cref{secflans}). 
The proof of the main result is in \cref{secunveil}.

In the second paper, we further analyse this structured set $X$, and show that if we cannot find an $N$-detachable pair, then $X \cup d$ is contained in one of the handful of \psep s that can appear in a matroid with no $N$-detachable pairs.
In the third paper, the main hurdle that remains is handling the case where for any pair $\{d,d'\}$ such that $M \ba d \ba d'$ has an $N$-minor, the $2$-separation $(Y,Z)$ in $M \ba d \ba d'$ has $|Y|<4$. 

\section{Preliminaries}
\label{presec}

The notation and terminology in the paper follow Oxley~\cite{oxbook}.
We write $x \in \clstar(Y)$ to denote that either $x \in \cl(Y)$ or $x \in \cocl(Y)$.
The phrase ``by orthogonality'' refers to the fact that a circuit and a cocircuit cannot intersect in exactly one element.
For a set~$X$ and element~$e$, we write $X \cup e$ instead of $X \cup \{e\}$, and $X-e$ instead of $X-\{e\}$.
We say that $X$ meets $Y$ if $X \cap Y \neq \emptyset$.
We denote $\{1,2,\dotsc,n\}$ by $\seq{n}$.

\subsection*{Connectivity}
Let $M$ be a matroid with ground set $E$.  The \emph{connectivity function} of $M$, denoted by $\lambda_M$, is defined as follows, for all subsets $X$ of \nopagebreak $E$:
\begin{align*}
  \lambda_M(X) = r(X) + r(E - X) - r(M).
\end{align*}
A subset $X$ or a partition $(X, E-X)$ of $E$ is \emph{$k$-separating} if $\lambda_M(X) \leq k-1$.
A $k$-separating partition $(X,E-X)$ is a \emph{$k$-separation} if $|X| \ge k$ and $|E-X|\ge k$.
A $k$-separating set $X$, a $k$-separating partition $(X,E-X)$ or a $k$-separation $(X,E-X)$ is \emph{exact} if $\lambda_M(X) = k-1$.
The matroid $M$ is \emph{$n$-connected} if, for all $k < n$, it has no $k$-separations.
When a matroid is $2$-connected, we simply say it is \emph{connected}.

The connectivity functions of a matroid and its dual are equal; that is,
$\lambda_M(X) = \lambda_{M^*}(X)$.  In fact, it is easily shown that
\begin{align*}
  \lambda_M(X) = r(X) + r^*(X) - |X|.
\end{align*}
\subsection*{Spike-like $3$-separators}
Let $M$ be a matroid with ground set $E$.
We say that a $4$-element set $Q \subseteq E$ is a \emph{quad} if it is both a circuit and a cocircuit of $M$.

\begin{definition}
  \label{def-spike-like}
  Let $P \subseteq E$ be an exactly $3$-separating set of $M$.
  If there exists a partition $\{L_1,\dotsc,L_t\}$ of $P$ with $t\geq 3$ such that 
  \begin{enumerate}[label=\rm(\alph*)]
    \item $|L_i|=2$ for each $i\in\{1,\dotsc,t\}$, and
    \item $L_i\cup L_j$ is a quad for all distinct $i,j\in\{1,\dotsc,t\}$, 
  \end{enumerate}
  then $P$ is a \emph{\spikelike} of $M$.
\end{definition}

To illustrate the necessity for outcome (iii) of \cref{maintheorem} we describe the construction of a matroid that satisfies neither (i) nor (ii) of the theorem.
Let $F_7$ be a copy of the Fano matroid with a triangle $\{x,y,z\}$.
Let $F_7'$ be the matroid obtained from $F_7$ by adding elements $y'$ and $z'$ in parallel with $y$ and $z$ respectively, and relabelling the element~$x$ as $t$.
Now let $S$ be a spike with tip $t$, where $r(S) \ge 4$, and let $T=\{t,y',z'\}$ be a leg of $S$.  Let $M = P_T(F_7',S) \ba T$, the generalised parallel connection of $S$ and $F_7'$ along $T$ with the elements $T$ removed.  Then $M$ has no $F_7$-detachable pairs.
Alternatively,
let $F_7''$ be the matroid obtained from $F_7$ by adding elements $y'$ and $z'$ in parallel with $y$ and $z$ respectively, and freely adding the element~$t$ on the line spanned by $\{x,y,z\}$.
Then, similarly, $P_T(F_7'',S) \ba T$ has no $F_7$-detachable pairs.

A geometric illustration of a \spikelike\ is given in \cref{spikelikefig1}.

We will see three more \psep s, and how they can give rise to matroids without any $N$-detachable pairs, in \cref{sec-problematic}.

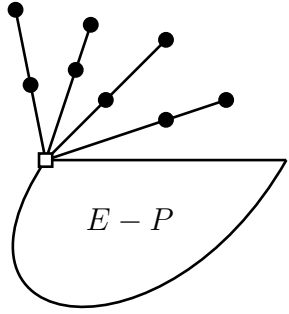
\begin{figure}
  \centering
  \begin{tikzpicture}[rotate=90,scale=0.8,line width=1pt]
    \tikzset{VertexStyle/.append style = {minimum height=5,minimum width=5}}
    \clip (-2.5,-6) rectangle (3.0,2);
    \node at (-1,-1.4) {\large$E-P$};
    \draw (0,0) .. controls (-3,2) and (-3.5,-2) .. (0,-4);
    \draw (0,0) -- (2.5,0.5);
    \draw (0,0) -- (2.25,-0.75);
    \draw (0,0) -- (2,-2);
    \draw (0,0) -- (1,-3);

    \SetVertexNoLabel
    \Vertex[x=1.25,y=0.25,LabelOut=true,L=$q_3$,Lpos=180]{c1}
    \Vertex[x=2.25,y=-0.75,LabelOut=true,L=$q_2$,Lpos=90]{c2}
    \Vertex[x=2.5,y=0.5,LabelOut=true,L=$q_1$,Lpos=180]{c3}
    \Vertex[x=1.5,y=-0.5,LabelOut=true,L=$q_4$,Lpos=135]{c4}

    \Vertex[x=1,y=-1,LabelOut=true,L=$q_1$,Lpos=180]{c5}
    \Vertex[x=2,y=-2,LabelOut=true,L=$q_1$,Lpos=180]{c6}

    \Vertex[x=1,y=-3,LabelOut=true,L=$q_4$,Lpos=135]{c7}
    \Vertex[x=0.67,y=-2,LabelOut=true,L=$q_4$,Lpos=135]{c8}

    \draw (0,0) -- (0,-4);

    \SetVertexNoLabel
    \tikzset{VertexStyle/.append style = {shape=rectangle,fill=white}}
    \Vertex[x=0,y=0]{a1}
  \end{tikzpicture}
  \caption{An example of a \spikelike\ in a matroid with rank $r(E-P)+3$}
  \label{spikelikefig1}
\end{figure}

\subsection*{More connectivity}
The next lemma is a consequence of the easily verified fact that the connectivity function is submodular.  We write ``by uncrossing'' to refer to an application of this lemma.

\begin{lemma}
  \label{onetrick}
  Let $M$ be a $3$-connected matroid, and let $X$ and $Y$ be $3$-separating subsets of $E(M)$.
  \begin{enumerate}
    \item If $|X \cap Y| \ge 2$, then $X \cup Y$ is $3$-separating.
    \item If $|E(M) - (X \cup Y)| \ge 2$, then $X \cap Y$ is $3$-separating.
  \end{enumerate}
\end{lemma}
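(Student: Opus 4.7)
The plan is a direct application of the submodularity of the connectivity function together with the symmetry $\lambda_M(S)=\lambda_M(E(M)-S)$, which follows at once from the formula $\lambda_M(S)=r(S)+r(E(M)-S)-r(M)$. Submodularity gives
\[
  \lambda_M(X)+\lambda_M(Y)\;\ge\;\lambda_M(X\cap Y)+\lambda_M(X\cup Y),
\]
and since $X$ and $Y$ are $3$-separating, the left-hand side is at most $4$. So to prove either part it suffices to bound one of $\lambda_M(X\cap Y)$, $\lambda_M(X\cup Y)$ from below by $2$; the other is then at most $2$, i.e.\ $3$-separating.

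For part (1), assume $|X\cap Y|\ge 2$. First I handle the degenerate case $|E(M)-(X\cup Y)|\le 1$: here $\lambda_M(X\cup Y)=\lambda_M(E(M)-(X\cup Y))\le r\bigl(E(M)-(X\cup Y)\bigr)\le 1$, so $X\cup Y$ is trivially $3$-separating. Otherwise $|E(M)-(X\cap Y)|\ge|E(M)-(X\cup Y)|\ge 2$, and since $|X\cap Y|\ge 2$, the $3$-connectivity of $M$ gives $\lambda_M(X\cap Y)\ge 2$. Submodularity then yields $\lambda_M(X\cup Y)\le 2$.

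Part (2) is symmetric. Assume $|E(M)-(X\cup Y)|\ge 2$. If $|X\cap Y|\le 1$ then $\lambda_M(X\cap Y)\le r(X\cap Y)\le 1$, so $X\cap Y$ is $3$-separating. Otherwise, combining $|X\cup Y|\ge|X\cap Y|\ge 2$ with the hypothesis $|E(M)-(X\cup Y)|\ge 2$ and the $3$-connectivity of $M$ gives $\lambda_M(X\cup Y)\ge 2$, and submodularity yields $\lambda_M(X\cap Y)\le 2$.

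There is essentially no obstacle here beyond careful bookkeeping of the degenerate cases: the only subtle point is that a ``$3$-separating'' set is not required to be large, so one cannot assume at the outset that $3$-connectivity applies directly to $X\cap Y$ or $X\cup Y$; the argument must branch on whether these sets (or their complements) are small enough that $\lambda$ can be bounded trivially.
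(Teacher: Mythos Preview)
Your proof is correct and follows exactly the approach the paper indicates: the paper does not actually prove this lemma but simply states that it ``is a consequence of the easily verified fact that the connectivity function is submodular,'' which is precisely the argument you give. Your careful handling of the degenerate small-set cases is appropriate, since the paper's definition of $3$-separating does not impose a size constraint.
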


The following lemmas are well known.

\begin{lemma}
  \label{swapSepSides}
  Let $e$ be an element of a matroid $M$, and let $X$ and $Y$ be disjoint sets whose union is $E(M) - \{e\}$.  Then $e \in \cl(X)$ if and only if $e \notin \cl^{*}(Y)$.
\end{lemma}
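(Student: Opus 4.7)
The plan is a direct computation using the standard formula relating rank and corank, namely $r^*(A) = |A| + r(E(M)-A) - r(M)$ for any $A \subseteq E(M)$. Since membership in a closure is characterized by a rank non-increase, I will translate both conditions in the biconditional into rank statements and check that they are equivalent.

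First I would note that $e \in \cl(X)$ if and only if $r(X \cup e) = r(X)$, and that $e \in \cl^*(Y)$ if and only if $r^*(Y \cup e) = r^*(Y)$. So the task reduces to showing that $r(X \cup e) = r(X)$ if and only if $r^*(Y \cup e) \neq r^*(Y)$. Since $X$ and $Y$ partition $E(M)-e$, we have $E(M) - Y = X \cup e$ and $E(M) - (Y \cup e) = X$. Substituting these into the rank-corank formula gives
\begin{align*}
  r^*(Y \cup e) &= |Y| + 1 + r(X) - r(M), \\
  r^*(Y) &= |Y| + r(X \cup e) - r(M).
\end{align*}
Subtracting yields $r^*(Y \cup e) - r^*(Y) = 1 - (r(X \cup e) - r(X))$. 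Since $r(X \cup e) - r(X) \in \{0,1\}$, this difference is either $0$ or $1$, so $e \in \cl^*(Y)$ (i.e.\ the difference is $0$) is equivalent to $r(X \cup e) = r(X) + 1$, which is exactly $e \notin \cl(X)$. The biconditional follows.

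There is no real obstacle here; the only thing to be careful about is keeping track of which sets appear as complements and not miscounting the cardinality terms when plugging into the rank-corank identity. The whole argument is a two-line calculation once the rank-corank formula is invoked.
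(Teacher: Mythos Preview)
Your proof is correct. The paper does not actually prove this lemma; it lists it among several ``well known'' facts without argument. Your rank--corank computation is the standard way to see it, and there is nothing to add.
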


\begin{lemma}
  \label{extendSep}
  Let $X$ be an exactly $3$-separating set in a $3$-connected matroid $M$, 
  and suppose that $e \in E(M) - X$.  Then $X \cup e$ is $3$-separating if and only if $e \in \clstar(X)$.
\end{lemma}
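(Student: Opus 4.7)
The plan is to compute $\lambda_M(X \cup e)$ directly and compare it to $\lambda_M(X) = 2$. First I would write
\[
\lambda_M(X \cup e) = r(X \cup e) + r((E(M) - X) - e) - r(M),
\]
and track how each rank term shifts when $e$ is moved from $E(M) - X$ into $X$. The quantity $r(X \cup e) - r(X)$ is either $0$ or $1$, taking the value $0$ precisely when $e \in \cl(X)$. Symmetrically, $r((E(M) - X) - e) - r(E(M) - X)$ is either $-1$ or $0$, taking the value $-1$ precisely when $e \notin \cl((E(M) - X) - e)$.

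Next I would translate the second condition into a statement about $\cocl(X)$. Applying \cref{swapSepSides} with the disjoint pair $X$ and $(E(M) - X) - e$ (whose union is $E(M) - \{e\}$) shows that $e \notin \cl((E(M)-X)-e)$ if and only if $e \in \cocl(X)$. Splitting into the four combinations of whether or not $e \in \cl(X)$ and whether or not $e \in \cocl(X)$, the computation above yields
\[
\lambda_M(X \cup e) = \lambda_M(X) + \alpha - \beta,
\]
where $\alpha = 1$ when $e \notin \cl(X)$ and $\alpha = 0$ otherwise, and $\beta = 1$ when $e \in \cocl(X)$ and $\beta = 0$ otherwise.

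Since $\lambda_M(X) = 2$, the inequality $\lambda_M(X \cup e) \le 2$ is equivalent to $\alpha \le \beta$, which holds precisely when $\alpha = 0$ or $\beta = 1$, i.e., when $e \in \cl(X)$ or $e \in \cocl(X)$. This is exactly the condition $e \in \clstar(X)$, giving the desired equivalence. The argument is essentially bookkeeping about how the ranks on the two sides of the partition change when a single element crosses, so I do not expect a serious obstacle; the only mild subtlety is the sign convention when converting ``$e$ is not in the closure of the complement'' into ``$e$ is in the coclosure of $X$'', and this is handled cleanly by \cref{swapSepSides}.
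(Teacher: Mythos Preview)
Your proof is correct. The paper does not actually give a proof of this lemma; it is listed among a block of results introduced with ``The following lemmas are well known,'' so there is nothing to compare against. Your argument---tracking the change in $\lambda_M$ when $e$ crosses the partition and using \cref{swapSepSides} to convert ``$e \notin \cl((E(M)-X)-e)$'' into ``$e \in \cocl(X)$''---is exactly the standard proof of this fact.
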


\begin{lemma}
  \label{exactSeps}
  Let $(X, Y)$ be an exactly $3$-separating partition of a $3$-connected matroid $M$. Suppose $|X| \ge 3$ and $x \in X$. Then
  $x \in \clstar(X-x)$.
\end{lemma}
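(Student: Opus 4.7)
The plan is to argue by contradiction: assume $x \notin \clstar(X-x)$, i.e., $x \notin \cl(X-x)$ and $x \notin \cocl(X-x)$, and derive a $2$-separation of $M$, contradicting $3$-connectivity.

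First, I would apply \cref{swapSepSides} to the partition $(X-x, Y)$ of $E(M) - \{x\}$. This translates the two assumptions into statements on the $Y$-side: $x \notin \cl(X-x)$ is equivalent to $x \in \cocl(Y)$, and $x \notin \cocl(X-x)$ is equivalent to $x \in \cl(Y)$. So under the contradiction hypothesis, $x$ lies in both $\cl(Y)$ and $\cocl(Y)$, meaning $r(Y\cup x) = r(Y)$ and $r^*(Y\cup x) = r^*(Y)$.

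Next, I would use the alternative form of the connectivity function from the preliminaries, $\lambda_M(Z) = r(Z) + r^*(Z) - |Z|$, applied to $Z = Y\cup x$. Substituting the two equalities from the previous step gives
\[
\lambda_M(Y\cup x) = r(Y) + r^*(Y) - (|Y|+1) = \lambda_M(Y) - 1.
\]
Since $\lambda_M$ is invariant under complementation and $(X,Y)$ is exactly $3$-separating, $\lambda_M(Y) = 2$, hence $\lambda_M(X-x) = \lambda_M(Y\cup x) = 1$.

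Finally, I would check the size conditions to conclude this is a genuine $2$-separation. From $|X| \ge 3$ we have $|X-x| \ge 2$, and $Y \ne \emptyset$ (otherwise $\lambda_M(X) = 0$, contradicting exact $3$-separation), so $|Y\cup x|\ge 2$. Thus $(X-x,\,Y\cup x)$ is a $2$-separation of $M$, contradicting the $3$-connectivity of $M$. The only subtle point is keeping the two applications of \cref{swapSepSides} straight (closure on one side corresponds to \emph{non}-coclosure on the other); beyond that, the argument is a direct computation using the $r + r^* - |\cdot|$ form of $\lambda_M$.
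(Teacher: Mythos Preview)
Your proof is correct. The paper does not supply a proof of this lemma, stating only that it is ``well known''; your argument via \cref{swapSepSides} and the $r + r^* - |\cdot|$ form of $\lambda_M$ is a standard and clean way to establish it.
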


\begin{lemma}
  \label{gutses}
  Let $(X, Y)$ be an exactly $3$-separating partition of a $3$-connected matroid $M$, with $|X| \ge 3$ and $x \in X$. Then
  $(X-x, Y \cup x)$ is exactly $3$-separating if and only if $x$ is in 
  one of $\cl(X-x) \cap \cl(Y)$ and $\cocl(X-x) \cap \cocl(Y)$.
\end{lemma}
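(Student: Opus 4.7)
The plan is to compute $\lambda_M(X-x)$ directly from the formula $\lambda_M(Z) = r(Z) + r^*(Z) - |Z|$ and translate the resulting closure conditions via \cref{swapSepSides}. Explicitly, first I would write
\[
\lambda_M(X-x) - \lambda_M(X) \;=\; 1 - [r(X) - r(X-x)] - [r^*(X) - r^*(X-x)],
\]
and observe that each bracketed quantity lies in $\{0,1\}$, equal to $0$ precisely when $x \in \cl(X-x)$ and when $x \in \cocl(X-x)$, respectively.

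Next I would split into cases according to whether $x \in \cl(X-x)$ and whether $x \in \cocl(X-x)$. By \cref{exactSeps}, $x \in \clstar(X-x)$, so at least one of these holds, which rules out the ``both contribute $1$'' case. If both contribute $0$ (i.e.\ $x$ lies in both the closure and coclosure of $X-x$), then $\lambda_M(X-x) = \lambda_M(X) + 1 = 3$. If exactly one contributes $0$, then $\lambda_M(X-x) = \lambda_M(X) = 2$. Combining with the lower bound $\lambda_M(X-x) \ge 2$ from $3$-connectivity (since $|X-x|\ge 2$ and $|Y\cup x|\ge 2$), this shows that $(X-x, Y\cup x)$ is exactly $3$-separating if and only if \emph{exactly one} of $x \in \cl(X-x)$ and $x \in \cocl(X-x)$ holds.

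Finally I would invoke \cref{swapSepSides}, which gives $x \in \cl(X-x) \Leftrightarrow x \notin \cocl(Y)$ and $x \in \cocl(X-x) \Leftrightarrow x \notin \cl(Y)$. Thus the condition ``$x \in \cl(X-x)$ and $x \notin \cocl(X-x)$'' becomes ``$x \in \cl(X-x) \cap \cl(Y)$'', and ``$x \notin \cl(X-x)$ and $x \in \cocl(X-x)$'' becomes ``$x \in \cocl(X-x) \cap \cocl(Y)$'', matching the statement.

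The proof is essentially a case analysis on the local rank/corank changes when removing $x$, so there is no real obstacle; the only subtle point is remembering to use \cref{exactSeps} to discard the case where $x$ is in neither closure (which would force $\lambda_M(X-x) = 1$, contradicting $3$-connectivity), and then correctly applying the ``swap sides'' dictionary to convert between conditions on $X-x$ and conditions on $Y$.
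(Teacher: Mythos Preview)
Your proof is correct. The paper does not actually prove this lemma: it is listed among a block of results introduced as ``well known'' and stated without proof. Your argument via the rank--corank formula for $\lambda_M$, together with \cref{exactSeps} and \cref{swapSepSides}, is the standard one and is entirely sound.
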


If $(X, Y)$ and $(X-x, x \cup Y)$ are exactly $3$-separating partitions in a $3$-connected matroid, then we say $x$ is a \emph{guts element} if $x \in \cl(X-x) \cap \cl(Y)$, and $x$ is a \emph{coguts element} if $x \in \cocl(X-x) \cap \cocl(Y)$.
We also say $x$ is \emph{in the guts of $(X,Y)$} or $x$ is \emph{in the coguts of $(X,Y)$}, respectively.

\begin{lemma}
  \label{gutsstayguts}
  Let $(X,Y)$ be a $3$-separation in a $3$-connected matroid. 
  Then $\cl(X) \cap \cocl(X) \cap Y = \emptyset$.
\end{lemma}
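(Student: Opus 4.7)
The plan is to suppose for contradiction that some $y \in Y$ lies in $\cl(X) \cap \cocl(X)$, and then use the alternative formula $\lambda_M(Z) = r(Z) + r^*(Z) - |Z|$ stated in the connectivity subsection to show that moving $y$ across the separation strictly decreases the connectivity, contradicting $3$-connectedness.

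In more detail, first I would record what the two closure conditions mean numerically: $y \in \cl(X)$ gives $r(X \cup y) = r(X)$, and $y \in \cocl(X)$ (equivalently $y \in \cl^*(X)$) gives $r^*(X \cup y) = r^*(X)$. Substituting into the formula $\lambda_M(X \cup y) = r(X \cup y) + r^*(X \cup y) - |X \cup y|$, the ranks stay fixed while the cardinality increases by $1$, so
\begin{equation*}
  \lambda_M(X \cup y) = r(X) + r^*(X) - |X| - 1 = \lambda_M(X) - 1 \le 1,
\end{equation*}
using $\lambda_M(X) \le 2$ since $(X,Y)$ is $3$-separating.

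Next I would verify that $(X \cup y, Y - y)$ is an honest partition with both parts of size at least $2$, so that $\lambda_M(X \cup y) \le 1$ really produces a forbidden separation. Since $(X,Y)$ is a $3$-separation, the definition in \cref{presec} gives $|X| \ge 3$ and $|Y| \ge 3$; hence $|X \cup y| \ge 4$ and $|Y - y| \ge 2$. Therefore $(X \cup y, Y - y)$ is a $k$-separation with $k \le 2$, contradicting the assumption that $M$ is $3$-connected. This completes the proof.

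There is essentially no obstacle here: the statement is a direct bookkeeping consequence of the cardinality-weighted connectivity formula, and the only care required is making sure both sides of the shifted partition remain large enough to count as a $2$-separation, which is immediate from the size hypothesis built into the definition of a $3$-separation.
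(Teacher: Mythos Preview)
Your proof is correct. The paper does not actually supply a proof of this lemma; it is listed among several ``well known'' connectivity facts stated without argument. Your approach via the formula $\lambda_M(Z) = r(Z) + r^*(Z) - |Z|$ is the standard one and is exactly what a reader would fill in.
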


A $k$-separation $(X, E-X)$ of a matroid $M$ with ground set $E$ is \emph{vertical} if $r(X) \ge k$ and $r(E-X) \ge k$.
We also say a partition $(X, \{z\}, Y)$ of $E$ is a \emph{vertical $3$-separation} when $(X \cup \{z\}, Y)$ and $(X, Y \cup \{z\})$ are both vertical $3$-separations and $z \in \cl(X) \cap \cl(Y)$.
Note that, given a vertical $3$-separation $(X,Y)$ and some $z \in Y$, if $z \in \cl(X)$, then $(X,\{z\},Y)$ is a vertical $3$-separation, by \cref{extendSep,exactSeps}.

A vertical $k$-separation in $M^*$ is known as a cyclic $k$-separation in $M$.
More specifically, a $k$-separation $(X, E-X)$ of $M$ is \emph{cyclic} if $r^*(X) \ge k$ and $r^*(E-X) \ge k$; or, equivalently, if $X$ and $E-X$ contain circuits.
We also say that a partition $(X, \{z\}, Y)$ of $E$ is a \emph{cyclic $3$-separation} if $(X, \{z\}, Y)$ is a vertical $3$-separation in $M^*$.

We say that a partition $(X_1,X_2,\dotsc,X_m)$ of $E(M)$ is a \emph{path of $3$-separations} if $(X_1 \cup \dotsm \cup X_i, X_{i+1} \cup \dotsm \cup X_m)$ is a $3$-separation for each $i \in \seq{m-1}$.
Observe that a vertical, or cyclic, $3$-separation $(X, \{z\},Y)$ is an instance of a path of $3$-separations.

A proof of the following is in \cite{stabilizers}. We use this lemma, and its dual, freely without reference.

\begin{lemma}
\label{openVertSep}
Let $M$ be a $3$-connected matroid and let $z \in E(M)$.  
The following are equivalent:
\begin{enumerate}
    \item $M$ has a vertical $3$-separation $(X, \{z\}, Y)$.\label{vsi}
    \item $\si(M/z)$ is not $3$-connected.\label{vsii}
\end{enumerate}
\end{lemma}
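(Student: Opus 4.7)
The plan is to prove each direction by translating between the connectivity functions of $M$ and $M/z$, with the main subtlety being the passage between $M/z$ and its simplification.

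For (i) $\Rightarrow$ (ii), suppose $(X, \{z\}, Y)$ is a vertical $3$-separation. Since $z \in \cl(X) \cap \cl(Y)$, we have $r(X \cup z) = r(X)$ and $r(Y \cup z) = r(Y)$, and both ranks are at least $3$. A direct computation gives
\[\lambda_{M/z}(X) = r(X \cup z) + r(Y \cup z) - r(M) - 1 = \lambda_M(X \cup z) - 1 = 1,\]
and $|X|, |Y| \ge 2$ since their ranks in $M/z$ are at least $2$, so $(X, Y)$ is a $2$-separation of $M/z$. To descend to $\si(M/z)$, observe that each nontrivial parallel class of $M/z$ is the $z$-deleted part of a rank-$2$ flat of $M$ through $z$; if such a class meets both $X$ and $Y$, then its $X$-part lies in $\cl_{M/z}(Y)$, so moving it fully to the $Y$-side preserves $\lambda_{M/z} \le 1$. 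Iterating produces a $2$-separation that respects every parallel class and so projects to a $2$-separation of $\si(M/z)$; the reverse shift is used whenever a naive move would push a side below two elements.

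For (ii) $\Rightarrow$ (i), take a $2$-separation $(A, B)$ of $\si(M/z)$ and let $(X, Y)$ be its pullback under simplification, so that $(X, Y)$ is a $2$-separation of $M/z$ with $r_{M/z}(X), r_{M/z}(Y) \ge 2$. Reading the computation above in reverse, $r(X \cup z) + r(Y \cup z) - r(M) = 2$ with $r(X \cup z), r(Y \cup z) \ge 3$. Suppose, for contradiction, that $z \notin \cl(Y)$; then $r(Y) = r(Y \cup z) - 1$, so $\lambda_M(X \cup z) = r(X \cup z) + r(Y) - r(M) = 1$, and combined with $|X \cup z| \ge 3$ and $|Y| \ge r_{M/z}(Y) \ge 2$ this gives a $2$-separation of $M$, contradicting $3$-connectivity. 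Hence $z \in \cl(Y)$, and symmetrically $z \in \cl(X)$. Then $r(X) = r(X \cup z) \ge 3$, $r(Y) = r(Y \cup z) \ge 3$, and $\lambda_M(X \cup z) = \lambda_M(X) = 2$, so $(X, \{z\}, Y)$ is the required vertical $3$-separation.

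The hard part is the parallel-class consolidation in the forward direction: each individual closure-shift is a routine application of submodularity, but verifying that no shift drops a side below size two—particularly when $|E(M)|$ is small or several parallel classes straddle—requires careful bookkeeping. The reverse direction, by contrast, is essentially a single line of rank arithmetic once the pullback is set up.
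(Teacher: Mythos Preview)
The paper does not prove this lemma; it simply cites \cite{stabilizers}. So there is no ``paper's own proof'' to compare against, and your argument must stand on its own.

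Your overall architecture is correct: the rank arithmetic linking $\lambda_{M/z}$ and $\lambda_M$ is right, and the reverse direction is clean. But there are two places where you wave your hands at precisely the points that need care.

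\textbf{Forward direction.} First, a simplification: since $\lambda_{M/z}(X)=1$, submodularity gives $r_{M/z}\big(\cl_{M/z}(X)\cap\cl_{M/z}(Y)\big)\le 1$, so at most \emph{one} parallel class of $M/z$ can straddle $(X,Y)$; there is no ``iterating'' and no possibility of ``several parallel classes'' straddling. Second, and more importantly, you assert that if moving the straddling class to one side drops that side below two simplified elements, the reverse shift works---but you never verify this. In fact both shifts can fail simultaneously only when $\si(M/z)$ has exactly three points; one then checks directly that this forces $r(M)=4$ and that two of the three lines through $z$ in $M$ span a rank-$3$ set, giving a $2$-separation of $M$ and contradicting $3$-connectivity. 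This argument is short but it is not optional: without it your consolidation step is unjustified.

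\textbf{Reverse direction.} You begin ``take a $2$-separation $(A,B)$ of $\si(M/z)$'', but ``$\si(M/z)$ is not $3$-connected'' could a priori mean it is disconnected rather than that it has a $2$-separation. You need the one-line observation that $M/z$ (hence $\si(M/z)$) is connected whenever $M$ is $3$-connected: a $1$-separation of $M/z$ would lift to a separation of $M$ of order at most $1$, which is impossible. Once that is said, your rank computation goes through; note also that $|X|,|Y|\ge 3$ follows automatically from $r(X),r(Y)\ge 3$, so the size conditions for a $3$-separation are met.
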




A \emph{segment} in a matroid $M$ is a subset $S$ of $E(M)$ such that $M|S \cong U_{2,k}$ for some $k \ge 3$, while a \emph{cosegment} of $M$ is a segment of $M^*$.

\begin{lemma}
  \label{rank2Remove}
  Let $M$ be a $3$-connected matroid and let $S$ be a 
  segment
  with at least four elements.  If $s \in S$, then $M \ba s$ is $3$-connected.
\end{lemma}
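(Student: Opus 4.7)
The plan is to assume for a contradiction that $M \ba s$ is not $3$-connected and to lift any low-order separation of $M \ba s$ to one of $M$, using the fact that $S - s$ still spans $s$ in $M$.

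First, I would observe that $M \ba s$ has no loops, coloops, parallel pairs or series pairs. No loops or parallel pairs arise because $M$ has none. A coloop of $M \ba s$ would correspond to a cocircuit $\{e\}$ or $\{e,s\}$ of $M$, neither of which exists since $M$ is $3$-connected (and has at least $|S| \ge 4$ elements). A series pair $\{a,b\}$ of $M \ba s$ would similarly correspond to a cocircuit $\{a,b\}$ or $\{a,b,s\}$ of $M$; the first is ruled out by $3$-connectivity, and in the second case, since $|S| \ge 4$, we can pick $s', s'' \in S - \{a,b,s\}$, so that $\{s,s',s''\}$ is a triangle of $M$ meeting the cocircuit $\{a,b,s\}$ in the single element $s$, contradicting orthogonality. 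In particular, since $S - s$ still has at least three elements forming a rank-$2$ segment of $M$, the element $s$ lies in the closure of any two-element subset of $S - s$, so $r(M) = r(M \ba s)$.

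Now suppose for a contradiction that $M \ba s$ has a $k$-separation $(A, B)$ with $k \le 2$. Because $M \ba s$ has no loops or coloops, any singleton side would force $\lambda_{M \ba s} \ge 1$ on that side only if it is a non-loop non-coloop, which gives $\lambda = 1$, so we may still assume $|A|, |B| \ge 2$ (otherwise the single element would be a loop or coloop, contradicting the previous paragraph). Since $|S - s| \ge 3$, the pigeonhole principle gives, without loss of generality, $|A \cap (S - s)| \ge 2$. Two elements of the rank-$2$ segment $S$ span $s$, so $s \in \cl(A)$ in $M$, and therefore $r_M(A \cup s) = r_M(A) = r_{M \ba s}(A)$.

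Combining this with $r(M) = r(M \ba s)$ gives
\[
\lambda_M(A \cup s) = r_M(A \cup s) + r_M(B) - r(M) = r_{M \ba s}(A) + r_{M \ba s}(B) - r(M \ba s) = \lambda_{M \ba s}(A) \le 1.
\]
Since $|A \cup s| \ge 3$ and $|B| \ge 2$, the partition $(A \cup s, B)$ is a $2$-separation of $M$, contradicting the $3$-connectivity of $M$. The only real step to be careful with is the exclusion of small-side separations via the loop/coloop/series-pair analysis at the start; the rest is routine once $s \in \cl(A)$ is established from the hypothesis $|S| \ge 4$.
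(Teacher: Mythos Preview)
The paper does not prove this lemma; it is listed among the ``well known'' results and stated without argument. Your approach---lifting any $k$-separation of $M \ba s$ (for $k \le 2$) to one of $M$ by adjoining $s$ to whichever side contains at least two elements of $S - s$---is the standard one and is correct.

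One minor wrinkle: in your series-pair exclusion, the choice of $s', s'' \in S - \{a,b,s\}$ can fail when $|S| = 4$ and $\{a,b\} \subseteq S - s$. In that subcase $\{a,b,s\}$ would be simultaneously a triangle and a triad, hence $\lambda_M(\{a,b,s\}) = 1$, which is impossible in a $3$-connected matroid on at least five elements (and when $|E(M)| = 4$ one has $M \cong U_{2,4}$, so the conclusion is immediate). In any case, the series-pair discussion is unnecessary: the separation-lifting step already handles all $k \le 2$ once you note $r(M) = r(M \ba s)$, which follows directly from $s \in \cl(S - s)$ rather than from any analysis of $M \ba s$.
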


The next two lemmas will be referred to by name. 

\begin{lemma}[Bixby's Lemma~\cite{bixby}]
  \label{bixbyL}
  Let $e$ be an element of a $3$-connected matroid $M$.
  Then either $M/e$ is $3$-connected up to parallel pairs, or $M \ba e$ is $3$-connected up to series pairs.
\end{lemma}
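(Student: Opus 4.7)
The plan is to argue by contradiction. Suppose neither $M/e$ is $3$-connected up to parallel pairs nor $M\ba e$ is $3$-connected up to series pairs; equivalently, both $\si(M/e)$ and $\co(M\ba e)$ fail to be $3$-connected. By \cref{openVertSep} and its dual, $M$ has a vertical $3$-separation $(A, \{e\}, B)$ and a cyclic $3$-separation $(C, \{e\}, D)$. In particular $e \in \cl(A) \cap \cl(B)$, $e \in \cocl(C) \cap \cocl(D)$, and each of $A, B, C, D$ has at least three elements, since $r(A), r(B) \geq 3$ and $r^*(C), r^*(D) \geq 3$. Since $e \in \cl(A)$ and $(A, B \cup e)$ is a $3$-separation of the $3$-connected matroid $M$, the set $A$ is exactly $3$-separating; dually, so is $C$.

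The core idea is to uncross $A$ and $C$ to obtain a single $3$-separation $(Z, E - Z)$ with $e \in E - Z$ while $e \in \cl(Z) \cap \cocl(Z)$, which will contradict \cref{gutsstayguts}. Setting $Z = A \cup C$, monotonicity of $\cl$ and $\cocl$ gives $e \in \cl(A) \subseteq \cl(Z)$ and $e \in \cocl(C) \subseteq \cocl(Z)$, while $e \notin A$ and $e \notin C$ force $e \in E - Z$. For \cref{onetrick}(1) to certify $Z$ as $3$-separating, we need $|A \cap C| \geq 2$; and since $E - Z = (B \cap D) \cup \{e\}$, we need $|B \cap D| \geq 2$ for $(Z, E - Z)$ to be a genuine $3$-separation.

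If either size condition fails, I would run the analogous argument with $Z' = A \cup D$, whose complement is $(B \cap C) \cup \{e\}$; this version succeeds when $|A \cap D| \geq 2$ and $|B \cap C| \geq 2$. Suppose both attempts fail: then one of $\{|A \cap C|, |B \cap D|\}$ and one of $\{|A \cap D|, |B \cap C|\}$ is at most $1$. Using $|A| = |A \cap C| + |A \cap D|$, $|C| = |A \cap C| + |B \cap C|$, and the analogous identities for $B$ and $D$, a short case check shows that any such configuration forces one of $|A|, |B|, |C|, |D|$ to be at most $2$, contradicting the size bound noted above. Hence one of the two uncrossings succeeds.

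The only real subtlety is the bookkeeping over the four corner intersections to guarantee that the uncrossing applies; everything else is mechanical. The heart of the argument is that guts membership of $e$ in $(A, B \cup e)$ and coguts membership of $e$ in $(C \cup e, D)$ both transfer, by monotonicity of $\cl$ and $\cocl$, to any combined $3$-separating set containing both $A$ and $C$---which is exactly what \cref{gutsstayguts} forbids for $e$ in the complement.
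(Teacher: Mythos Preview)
The paper does not prove Bixby's Lemma; it is stated as a known result and attributed to~\cite{bixby}. So there is no proof in the paper to compare against, and the relevant question is simply whether your argument is correct. It is: the contradiction via uncrossing a vertical and a cyclic $3$-separation, together with \cref{gutsstayguts}, is a clean and standard route. Your four-case check on the corner intersections is right---each failure forces one of $|A|,|B|,|C|,|D|\le 2$, contradicting the rank/corank bounds from the vertical and cyclic separations. One cosmetic point: you do not need $A$ and $C$ to be \emph{exactly} $3$-separating for \cref{onetrick} to apply; $3$-separating suffices, and that is immediate from the definition of a $3$-separation.
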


\begin{lemma}[Tutte's Triangle Lemma~\cite{tutte1966}]
  \label{ttL}
  Let $\{a,b,c\}$ be a triangle in a $3$-connected matroid $M$. If neither $M \ba a$ nor $M \ba b$ is $3$-connected, then $M$ has a triad which contains $a$ and exactly one element from $\{b,c\}$.
\end{lemma}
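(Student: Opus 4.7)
My plan is to prove Tutte's Triangle Lemma by analyzing a 2-separation of $M\backslash a$ and exploiting orthogonality with the triangle $\{a,b,c\}$. Since $a$ lies in a circuit it is not a coloop, so $r(M)=r(M\backslash a)$ and $M\backslash a$ is connected. As $M\backslash a$ is not 3-connected, it admits a 2-separation $(X,Y)$ with $|X|,|Y|\ge 2$; I would choose one with $|X|$ minimum.

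The first key step is to show that $b$ and $c$ lie on opposite sides of $(X,Y)$. If $\{b,c\}\subseteq Y$, then $a\in\cl(\{b,c\})\subseteq\cl(Y)$, so a direct rank computation yields $\lambda_M(X)=\lambda_{M\backslash a}(X)\le 1$, contradicting the 3-connectedness of $M$ since $|X|\ge 2$ and $|Y\cup a|\ge 3$. Hence, WLOG, $b\in X$ and $c\in Y$. An analogous calculation also rules out $a\in\cl(X)$ and $a\in\cl(Y)$, so by two applications of \cref{swapSepSides} we obtain $a\in\cocl(X)\cap\cocl(Y)$. In particular, there is a cocircuit $C^*$ of $M$ with $a\in C^*\subseteq X\cup a$, and orthogonality with $\{a,b,c\}$, combined with $c\notin X\cup a$, forces $b\in C^*$. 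Thus $C^*$ contains both $a$ and $b$ but avoids $c$.

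It remains to show $|C^*|=3$, which identifies $C^*$ as the desired triad $\{a,b,y\}$, containing $a$ and exactly one element of $\{b,c\}$. For this I would use minimality of $|X|$: if $|C^*|\ge 4$, then $|X|\ge 3$, and I would argue via submodularity that, for any $z\in X-C^*$, the partition $(X-z,\,Y\cup z)$ remains 2-separating in $M\backslash a$, contradicting minimality. The key point is that, since $z\notin C^*$, moving $z$ across the separation does not disturb the cocircuit certifying $a\in\cocl(X)$.

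The main obstacle I expect is precisely this minimality step: submodularity readily yields $\lambda_{M\backslash a}(X-z)\le 2$, but sharpening this to $\le 1$ requires careful bookkeeping of whether $z$ lies in $\cl(X-z)$ or not. If the direct argument stalls, a natural backup is to invoke the symmetric hypothesis that $M\backslash b$ is not 3-connected: the same analysis then produces a second cocircuit $D^*$ containing $a,c$ but avoiding $b$, and combining $C^*$ and $D^*$ via cocircuit elimination should force a triad through $a$ of size three. One must also dispatch the degenerate possibility that $\{a,b,c\}$ is itself a cocircuit; this gives $\lambda_M(\{a,b,c\})=1$, forcing $|E(M)|\le 4$, a range in which $M\backslash a$ is trivially 3-connected and the hypothesis fails.
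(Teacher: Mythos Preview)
The paper does not prove this lemma; it is stated with a citation to Tutte as a classical result, so there is no in-paper proof to compare against. I assess your proposal on its own merits.

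Your setup is correct and matches the standard opening: from a 2-separation $(X,Y)$ of $M\backslash a$ you rightly deduce that $b$ and $c$ lie on opposite sides, that $a\notin\cl(X)\cup\cl(Y)$, and hence that some cocircuit $C^*\subseteq X\cup a$ contains $a$ and $b$ but not $c$.

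Step 5, however, has real gaps. First, nothing excludes $C^*=X\cup a$, in which case $X-C^*=\emptyset$ and there is no $z$ to move. Second, even when some $z\in X-C^*$ exists, the condition $z\notin C^*$ does not force $(X-z,\,Y\cup z)$ to be a 2-separation of $M\backslash a$: for that you need $z\in\cl(Y)$ (so $r(Y\cup z)=r(Y)$) or $z\notin\cl(X-z)$ (so $r(X-z)=r(X)-1$), and neither follows from $z\notin C^*$. Third, the cocircuit-elimination backup cannot yield a triad through $a$: eliminating $a$ from two cocircuits that both contain $a$ produces a cocircuit \emph{avoiding} $a$. (Also, the cocircuits obtained from a 2-separation of $M\backslash b$ all contain $b$; the cocircuit through $a$ and $c$ you describe actually comes from $a\in\cocl(Y)$ in the $M\backslash a$ analysis, and pairing it with $C^*$ runs into the same problem.)

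The standard completion does not try to bound $|C^*|$; it shows directly that the minimal $|X|$ equals $2$, so that $X\cup a$ is itself the required triad. Minimality gives more than you used: for every $z\in X-b$ one has simultaneously $z\in\cl(X-z)$ and $z\notin\cl(Y)$, so $Y$ is a flat of $M$. If $|X|\ge 3$, one then invokes the hypothesis on $M\backslash b$ to obtain a second exact 3-separation $(U\cup b,\,V)$ of $M$ with $\{a,b\}\subseteq(X\cup a)\cap(U\cup b)$ and $c\in Y\cap V$, and an uncrossing argument (together with minimality of $|X|$ and of $|U|$) produces a 3-separating 3-element set containing $\{a,b\}$ but not $c$; since $a\notin\cl(X)$ this set cannot be a triangle, hence it is the desired triad. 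See Oxley's \emph{Matroid Theory} for the details.
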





A proof of the following is in \cite{stabilizers}.  

\begin{lemma}
  \label{r3cocircsi}
  Let $C^*$ be a rank-$3$ cocircuit of a $3$-connected matroid $M$.
If $x \in C^*$ has the property that $\cl_M(C^*)-x$ contains a triangle of $M/x$, then $\si(M/x)$ is $3$-connected.
\end{lemma}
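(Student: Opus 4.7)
Proof plan: Assume for contradiction that $\si(M/x)$ is not $3$-connected. By \cref{openVertSep}, $M$ admits a vertical $3$-separation $(A, \{x\}, B)$, so $r(A), r(B) \geq 3$ and $x \in \cl(A) \cap \cl(B)$. Let $F = \cl_M(C^*)$; this is a rank-$3$ flat, and since $T \cup x \subseteq F$ with $r_M(T \cup x) = 3$, we have $\cl_{M/x}(T) = F - x$. The overall strategy is to force $C^* \subseteq A \cup x$, and then contradict orthogonality using a circuit witnessing $x \in \cl(B)$.

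First I would arrange $T \subseteq A$. If the triangle $T$ is split across the separation, say with $T \cap A = \{a\}$ and $T \cap B = \{b, c\}$, then $a \in \cl_M(\{b,c,x\}) \subseteq \cl_M(B)$ (using $x \in \cl(B)$), so $a$ is a guts element and can be moved from $A$ to $B$, placing $T \subseteq B$; the case $|T \cap B| = 1$ is symmetric. After possibly relabeling, assume $T \subseteq A$. Then $F - x = \cl_{M/x}(T) \subseteq \cl_{M/x}(A)$, hence $F \subseteq \cl_M(A \cup x) = \cl_M(A)$, and in particular every element of $C^* \cap B$ lies in $B \cap \cl(A)$. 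I then move each such element from $B$ to $A$ one at a time. Each move preserves vertical $3$-separation: if the rank of the $B$-side dropped then $\lambda$ would become $1$, giving a $2$-separation of $M$ with both sides of size at least $2$, contradicting $3$-connectivity. Denote the resulting vertical $3$-separation by $(A', \{x\}, B')$; then $C^* \subseteq A' \cup x$ and $r(B') \geq 3$, so in particular $B' \neq \emptyset$.

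Finally, since $x \in \cl(B')$ and $x \notin B'$, there is a circuit $C$ of $M$ with $x \in C \subseteq B' \cup x$. Then $C \cap C^* \subseteq (B' \cup x) \cap (A' \cup x) = \{x\}$, so $|C \cap C^*| = 1$, contradicting orthogonality. The main obstacle is verifying that each guts-element move preserves the vertical $3$-separation (i.e., that the $B$-side does not lose rank), which reduces to the rank computation sketched above using the $3$-connectivity of $M$.
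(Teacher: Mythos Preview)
The paper does not prove this lemma; it simply cites \cite{stabilizers}. So there is no in-paper proof to compare against, and I evaluate your argument on its own.

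Your proof is correct. The logic is: arrange the triangle $T$ on one side $A$ of the vertical $3$-separation, use $r_M(T\cup x)=3=r_M(C^*)$ to get $C^*\subseteq\cl_M(A)$, then migrate the elements of $C^*\cap B$ across to $A$ as guts elements, and finally contradict orthogonality via a circuit through $x$ lying in $B'\cup x$. Each step checks out; in particular your rank-drop argument is sound: since $r(B_i)\ge 3$ at every stage we always have $|B_i-c|\ge 2$, so a rank drop genuinely produces a $2$-separation of $M$, and otherwise $\cl(B_i-c)=\cl(B_i)\ni x$ so the guts condition on $x$ is preserved.

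One small point worth making explicit: the same rank-drop reasoning is needed for the \emph{initial} move that puts $T$ entirely on one side. When you move, say, the lone element $a\in T\cap A$ over to $B$, you must note that $|A-a|\ge 2$ (since $r(A)\ge 3$) and hence either $r(A-a)=r(A)$, keeping the separation vertical, or $\lambda(A-a)=1$ and you are already done by $3$-connectivity. You allude to this under ``the main obstacle'' but only spell it out for the $B$-side moves; it applies equally to this first rearrangement.
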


Proofs of the following two lemmas appear in \cite{bs2014}.

\begin{lemma}
  \label{r3cocirc}
  Let $M$ be a $3$-connected matroid with $r(M) \ge 4$.
  Suppose that $C^*$ is a rank-$3$ cocircuit of $M$.
  If there exists some $x \in C^*$ such that $x \in \cl(C^*-x)$, then $\co(M \ba x)$ is $3$-connected.
\end{lemma}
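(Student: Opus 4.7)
The plan is to argue by contradiction. Suppose $\co(M \ba x)$ is not $3$-connected. By the dual of \cref{openVertSep}, $M$ has a cyclic $3$-separation $(U, \{x\}, V)$. Set $A = C^* \cap U$ and $B = C^* \cap V$, so $C^* = A \cup \{x\} \cup B$. First I would collect some structural observations: since $x \in \cl(C^*-x)$, we have $r(C^*-x) = r(C^*) = 3$ and hence $|C^*| \ge 4$; since $x \in \cl^*(U) \cap \cl^*(V)$, \cref{swapSepSides} gives $x \notin \cl(U)$ and $x \notin \cl(V)$, so combined with $x \in \cl(A \cup B)$ both $A$ and $B$ are nonempty; and since $r(E - C^*) = r(M) - 1 \ge 3$, we have $|U - A| + |V - B| = |E - C^*| \ge 3$, so, interchanging $U$ and $V$ if necessary, we may assume $|V - B| \ge 2$.

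The crux is to show $|A| = 1$. Applying part (ii) of \cref{onetrick} to the $3$-separating sets $C^*$ and $U \cup \{x\}$, whose union-complement is $V - B$ with $|V - B| \ge 2$, I obtain that the intersection $A \cup \{x\}$ is $3$-separating, so $\lambda(A \cup \{x\}) \le 2$. I then compute $\lambda(A \cup \{x\})$ directly. The key calculation is
\begin{align*}
r(E - A - \{x\}) = r((E - C^*) \cup B) = r(M),
\end{align*}
since $E - C^*$ is a hyperplane and any element of $B$ lies outside it. Combined with $r(A \cup \{x\}) = r(A) + 1$ (from $x \notin \cl(U) \supseteq \cl(A)$), this gives $\lambda(A \cup \{x\}) = r(A) + 1$. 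Hence $r(A) \le 1$; since $A$ is nonempty and $M$ has no loops or parallel pairs, $|A| = 1$.

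To finish, I case-split on $|U - A|$. If $|U - A| \ge 2$, then the symmetric argument---applying part (ii) of \cref{onetrick} to $C^*$ and $V \cup \{x\}$---gives $|B| = 1$ as well, so $|C^*| = 3$, contradicting $|C^*| \ge 4$. If instead $|U - A| \le 1$, then $|U| \le 2$, but by definition of a cyclic $3$-separation $U$ contains a circuit of $M$ and hence $|U| \ge 3$, a contradiction. Either way the assumption that $\co(M \ba x)$ is not $3$-connected fails.

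The most delicate step is the rank identity $r(E - A - \{x\}) = r(M)$; it rests on $E - C^*$ being a hyperplane of $M$ and is the one place where the cocircuit hypothesis on $C^*$ is essential. Once that identity is in hand, the remainder of the argument is a routine combination of uncrossing with standard $3$-connectivity facts.
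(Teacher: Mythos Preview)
Your proof is correct. The paper does not actually give its own proof of this lemma; it cites \cite{bs2014} for the argument. Your approach---assume a cyclic $3$-separation $(U,\{x\},V)$ exists, uncross $C^*$ against $U\cup\{x\}$ (and, in the symmetric case, against $V\cup\{x\}$), and use the hyperplane identity $r((E-C^*)\cup B)=r(M)$ to force $r(A)\le 1$---is clean and self-contained, and every step checks out. The only cosmetic remark is that when you invoke $|U|\ge 3$ from the fact that $U$ contains a circuit, you are implicitly using that a $3$-connected matroid with $r(M)\ge 4$ has no circuits of size at most two; this is standard, but worth a word if you were writing the proof formally.
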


\begin{lemma}
  \label{presingle}
  Let $(X,Y)$ be a $3$-separation of a $3$-connected matroid $M$. If $X\cap \cl(Y)\neq \emptyset$ and $X\cap \cl^*(Y)\neq \emptyset$, then $|X\cap \cl(Y)|=1$ and $|X\cap \cl^*(Y)|=1$.
\end{lemma}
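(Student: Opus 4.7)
I would argue by contradiction and duality. Suppose, for contradiction, $|X \cap \cl(Y)| \geq 2$; I will show this forces $X \cap \cl^*(Y) = \emptyset$, contradicting the nonemptiness hypothesis. The dual conclusion $|X \cap \cl^*(Y)| = 1$ then follows by applying the same argument in $M^*$, since $\lambda$, the setup, and the hypotheses are self-dual. So pick distinct $g_1, g_2 \in X \cap \cl(Y)$ and, for contradiction, some $c \in X \cap \cl^*(Y)$. By \cref{gutsstayguts} (applied with the roles of $X$ and $Y$ swapped), $X \cap \cl(Y) \cap \cl^*(Y) = \emptyset$, so $c \notin \cl(Y)$ and in particular $c \notin \{g_1, g_2\}$: these are three distinct elements. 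Since $M$ is simple, $r(\{g_1, g_2\}) = 2$. From $g_1, g_2 \in \cl(Y)$ I get $r(Y \cup \{g_1, g_2\}) = r(Y)$, and \cref{swapSepSides} applied to $c$ gives $c \notin \cl(X - c)$, so $c$ is a coloop of $M|X$ and $r(X - c) = r(X) - 1$. Throughout, $\lambda(X) = 2$ supplies the identity $r(X) + r(Y) = r(M) + 2$.

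The degenerate case $|X| = 3$ I dispose of by orthogonality. Here $X = \{g_1, g_2, c\}$, and $c \in \cl(\{g_1, g_2\}) \subseteq \cl(Y)$ would contradict $c \notin \cl(Y)$; so $r(X) = 3$, whence $r^*(X) = |X| + 2 - r(X) = 2$. Since $M^*|X$ is simple of rank $2$ on three elements, $X$ is a triad of $M$. But then the witness circuit $D \subseteq Y \cup g_1$ for $g_1 \in \cl(Y)$ satisfies $D \cap X = \{g_1\}$, violating orthogonality with the cocircuit $X$.

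The main step is the case $|X| \geq 4$. The idea is to work in the minor $N = M / \{g_1, g_2\} \ba c$, with ground set $E_N = E(M) - \{g_1, g_2, c\}$, and obtain a contradiction from subadditivity of $r_N$. Set $S = X - \{g_1, g_2, c\}$, so $|S| \geq 1$ and $E_N = S \cup Y$ (disjoint union). The standard minor rank formula $r_N(A) = r_M(A \cup \{g_1, g_2\}) - 2$ together with the three observations above yields $r_N(E_N) = r(M) - 2$ (using that $M$ has no coloops, so $r(E - c) = r(M)$), $r_N(Y) = r(Y) - 2$ (using $g_1, g_2 \in \cl(Y)$), and $r_N(S) = r(X) - 3$ (using that $c$ is a coloop of $M|X$). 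Because $|X - c| \geq 3$ in a simple matroid forces $r(X) \geq 3$, these quantities are non-negative. Subadditivity then gives $r_N(S) + r_N(Y) \geq r_N(S \cup Y) = r_N(E_N)$, which rearranges to $r(X) + r(Y) \geq r(M) + 3$, contradicting $r(X) + r(Y) = r(M) + 2$.

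The main obstacle is identifying exactly the right minor; once one settles on $N = M / \{g_1, g_2\} \ba c$, the contradiction drops out of a single application of subadditivity, and everything else (the two appeals to lemmas from the preliminaries, the orthogonality check when $|X| = 3$, and the concluding duality step) is routine.
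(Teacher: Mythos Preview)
The paper does not prove this lemma; it merely cites \cite{bs2014} for the proof. So there is no in-paper argument to compare against.

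Your argument is correct. The minor $N=M/\{g_1,g_2\}\backslash c$ is well chosen: the three rank computations $r_N(E_N)=r(M)-2$, $r_N(Y)=r(Y)-2$, and $r_N(S)=r(X)-3$ are all valid, and subadditivity then yields $r(X)+r(Y)\ge r(M)+3$, contradicting $\lambda(X)=2$. The appeals to \cref{gutsstayguts} (with the roles of $X$ and $Y$ swapped, which is legitimate since $(Y,X)$ is also a $3$-separation) and to \cref{swapSepSides} are correct. One small observation: your separate treatment of $|X|=3$ via orthogonality is not actually needed. In that case $S=\emptyset$ and $r_N(S)=0=r(X)-3$, so your general inequality still reads $r(X)+r(Y)\ge r(M)+3$ and gives the same contradiction; the orthogonality argument is a pleasant alternative but the case split could be dropped.
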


Suppose $M$ is a $3$-connected matroid, there is an element $d \in E(M)$ such that $M \ba d$ is $3$-connected, and $X \subseteq E(M\ba d)$ is exactly $3$-separating in $M \ba d$.
We say that $d$ \emph{blocks} $X$ if $X$ is not $3$-separating in $M$, and $d$ \emph{fully blocks} $X$ if neither $X$ nor $X \cup d$ is $3$-separating in $M$.
If $d$ blocks $X$, then $d \notin \cl(E(M\ba d)-X)$, so $d \in \cocl(X)$ by \cref{swapSepSides}.
It is easily shown that $d$ fully blocks $X$ if and only if $d \notin \cl(X) \cup \cl(E(M \ba d)-X)$.

\subsection*{Full closure}
A set $X$ in a matroid $M$ is {\em fully closed} if it is closed and coclosed; that is, $\cl(X)=X=\cl^*(X)$.
The {\em full closure} of a set $X$, denoted $\fcl(X)$, is the intersection of all fully closed sets that contain $X$.
It is easily seen that the full closure is a well-defined closure operator, and that one way of obtaining the full closure of a set $X$ is to take the closure of $X$, then the coclosure of the result, and repeat until neither the closure nor coclosure introduces new elements.
We frequently use the following straightforward lemma.

\begin{lemma}
  \label{fcllemma}
  Let $(X,Y)$ be a $2$-separation in a connected matroid $M$ where $M$ contains no series or parallel pairs.
  Then $(\fcl(X),Y-\fcl(X))$ is also a $2$-separation of $M$.
\end{lemma}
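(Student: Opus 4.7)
The plan is to verify two things: that $\lambda(\fcl(X)) \le 1$, so the partition $(\fcl(X), E(M)-\fcl(X))$ is $2$-separating; and that $|\fcl(X)| \ge 2$ and $|E(M)-\fcl(X)| \ge 2$, so it is in fact a $2$-separation.

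First I would construct $\fcl(X)$ by iteratively absorbing one element at a time, producing a sequence $X = X_0 \subsetneq X_1 \subsetneq \dotsb \subsetneq X_n = \fcl(X)$, where at each step $X_i = X_{i-1} \cup \{z_i\}$ for some $z_i \in (\cl(X_{i-1}) \cup \cl^*(X_{i-1})) - X_{i-1}$. A direct calculation from $\lambda(A) = r(A) + r(E(M)-A) - r(M)$ shows that adding an element of $\cl$ or $\cl^*$ cannot increase $\lambda$, so $\lambda(X_i) \le \lambda(X_0) = 1$ for every $i$, the final equality following because $M$ is connected and $(X,Y)$ is a $2$-separation. In particular, $\lambda(\fcl(X)) \le 1$.

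Since $X \subseteq \fcl(X)$, the bound $|\fcl(X)| \ge 2$ is automatic. For $|E(M)-\fcl(X)| \ge 2$, I would argue by contradiction. Suppose $|E(M)-\fcl(X)| \le 1$; as $|E(M) - X_0| \ge 2$, there is a first step of the construction after which the complement drops below size~$2$, and because we add one element at a time this step takes us from $|E(M) - X_{i-1}| = 2$ to $|E(M) - X_i| = 1$. Write $E(M) - X_{i-1} = \{y, z_i\}$, and first suppose $z_i \in \cl(X_{i-1})$. Expanding $\lambda(X_{i-1}) = 1$ gives $r(X_{i-1}) + r(\{y, z_i\}) = r(M) + 1$. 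The hypothesis that $M$ has no parallel pair forces $r(\{y, z_i\}) = 2$, hence $r(X_{i-1}) = r(M) - 1$; combined with $r(X_i) = r(X_{i-1})$ (since $z_i \in \cl(X_{i-1})$) and $r(\{y\}) = 1$ (since $M$ is loopless), this yields $\lambda(X_i) = 0$. But $X_i$ is a proper non-empty subset of $E(M)$ and $M$ is connected, so $\lambda(X_i) \ge 1$, a contradiction. The case $z_i \in \cl^*(X_{i-1})$ is handled symmetrically via the self-duality $\lambda_M = \lambda_{M^*}$, with ``no series pair'' playing the role of ``no parallel pair''.

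The only real obstacle is pinning down the right transition in the iterative construction; once one focuses on the step at which the complement first drops below size~$2$, the hypotheses on parallel and series pairs collide head-on with connectivity, forcing the contradiction.
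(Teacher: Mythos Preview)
Your proof is correct. The paper does not actually give a proof of this lemma; it is introduced with the remark ``We frequently use the following straightforward lemma'' and left unproved, so there is nothing to compare against beyond noting that your argument supplies the details the paper omits. One minor point: when you write ``Expanding $\lambda(X_{i-1}) = 1$'', you are tacitly using not only the inequality $\lambda(X_{i-1}) \le 1$ established earlier but also $\lambda(X_{i-1}) \ge 1$ from connectivity of $M$; this is fine, but you might state it explicitly since you only asserted the equality for $X_0$.
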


\subsection*{Fans}
Let $M$ be a $3$-connected matroid.
A subset $F$ of $E(M)$ having at least three elements is a \emph{fan} if there is an ordering $(f_1, f_2, \dotsc, f_k)$ of the elements of $F$ such that
\begin{enumerate}[label=\rm(\alph*)]
  \item $\{f_1,f_2,f_3\}$ is either a triangle or a triad, and\label{fani}
  \item for all $i \in \seq{k-3}$, if $\{f_i, f_{i+1}, f_{i+2}\}$ is a triangle, then $\{f_{i+1}, f_{i+2}, f_{i+3}\}$ is a triad, while if $\{f_i, f_{i+1}, f_{i+2}\}$ is a triad, then $\{f_{i+1}, f_{i+2}, f_{i+3}\}$ is a triangle.\label{fanii}
\end{enumerate}
An ordering of $F$ satisfying \ref{fani} and \ref{fanii} is a \emph{fan ordering} of $F$.
If $F$ has a fan ordering $(f_1, f_2, \dotsc, f_k)$ where $k \geq 4$, then $f_1$ and $f_k$ are the \emph{ends} of $F$, and $f_2, f_3, \dotsc, f_{k-1}$ are the \emph{internal elements} of $F$.
A fan ordering is unique, up to reversal, when $k \ge 5$. 

Let $F$ be a fan with ordering $(f_1, f_2, \dotsc, f_k)$ where $k \geq 4$,
and let $i \in \seq{k}$ if $k \geq 5$, or $i \in \{1,4\}$ if $k=4$.
An element $f_i$ is a \emph{spoke element} of $F$ if $\{f_1, f_2, f_3\}$ is a triangle and $i$ is odd, or if $\{f_1, f_2, f_3\}$ is a triad and $i$ is even; otherwise $f_i$ is a \emph{rim element}.

The next lemma follows easily from Bixby's Lemma.

\begin{lemma}
  \label{fanEnds}
  Let $M$ be a $3$-connected matroid that is not a wheel or a whirl. 
  Suppose $M$ has a fan~$F$ of at least four elements, and let $f$ be an end of $F$.
  \begin{enumerate}
    \item If $f$ is a spoke element, then $\co(M\ba f)$ is $3$-connected and $\si(M/f)$ is not $3$-connected.
    \item If $f$ is a rim element, then $\si(M/f)$ is $3$-connected and $\co(M \backslash f)$ is not $3$-connected.
  \end{enumerate}
\end{lemma}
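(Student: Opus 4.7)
The plan is to prove (1) directly via Bixby's Lemma and to deduce (2) by duality. Suppose $f$ is a spoke end of $F$, with fan ordering $(f_1,\dotsc,f_k)$ and $f=f_1$. By the definition of a spoke element, $\{f_1,f_2,f_3\}$ is a triangle, and since $k\ge 4$ the triple $\{f_2,f_3,f_4\}$ is a triad.

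The heart of the argument is to show that $\si(M/f_1)$ is not $3$-connected. Since $\{f_1,f_2,f_3\}$ is a triangle through $f_1$, the elements $f_2$ and $f_3$ are parallel in $M/f_1$, so at least one of them---say $f_3$---is removed in forming $\si(M/f_1)$. On the other hand, the triad $\{f_2,f_3,f_4\}$ is a cocircuit of $M$ not containing $f_1$, and hence remains a cocircuit of $M/f_1$. The image of this cocircuit in the simplification contains a cocircuit of $\si(M/f_1)$; the minimality check uses that $M$ has no cocircuits of size at most two, and shows that generically the cocircuit is $\{f_2,f_4\}$, while in the degenerate case in which $f_4$ is also parallel to $f_2$ in $M/f_1$ it collapses to the singleton $\{f_2\}$ (that is, $f_2$ becomes a coloop). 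Provided $\si(M/f_1)$ has at least four elements---ensured by the hypothesis that $M$ is not a wheel or whirl---such a short cocircuit prevents $\si(M/f_1)$ from being $3$-connected.

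Now Bixby's Lemma (\cref{bixbyL}) forces $\co(M\ba f_1)$ to be $3$-connected, completing (1). For (2), observe that the fan $F$ in $M^*$ has its triangles and triads swapped, so a rim end of $F$ in $M$ is a spoke end of $F$ in $M^*$; applying (1) to $M^*$ and dualising then yields (2). The only genuine technical point is the cocircuit-minimality check that produces the short cocircuit in $\si(M/f_1)$, together with handling the degenerate parallel-class case just mentioned; once these are in hand, the rest is an immediate appeal to Bixby's Lemma.
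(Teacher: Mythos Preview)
Your approach is exactly what the paper intends: the paper gives no argument beyond the remark that the result ``follows easily from Bixby's Lemma,'' and your write-up faithfully unpacks that route---show $\si(M/f)$ is not $3$-connected for a spoke end $f$, then invoke Bixby, and dualise for part~(ii).

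There is, however, a genuine gap at the step where you assert that ``$M$ is not a wheel or a whirl'' forces $|E(\si(M/f_1))|\ge 4$. This is false in low rank. Take the rank-$3$ matroid $M$ on $\{f_1,f_2,f_3,f_4,g_1,g_2\}$ whose only triangles are $\{f_1,f_2,f_3\}$ and $\{f_1,g_1,g_2\}$. Then $M$ is $3$-connected and is neither a wheel nor a whirl (only two triangles, while $M(K_4)$ and $\mathcal{W}^3$ have four and three respectively); moreover $(f_1,f_2,f_3,f_4)$ is a fan with spoke end $f_1$, since $\{f_2,f_3,f_4\}$ is the complement of the line $\{f_1,g_1,g_2\}$ and hence a triad. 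But $M/f_1$ has exactly three parallel classes, so $\si(M/f_1)\cong U_{2,3}$, which \emph{is} $3$-connected under the paper's definition. Thus the lemma as literally stated has a low-rank edge case that neither the paper's one-line justification nor your argument covers. Your argument becomes correct once one also assumes $r(M)\ge 4$ for part~(i) (and dually $r^*(M)\ge 4$ for part~(ii)): then $(\{f_2,f_3,f_4\},\{f_1\},E(M)-F)$ is a genuine vertical $3$-separation, which by \cref{openVertSep} is equivalent to $\si(M/f_1)$ failing $3$-connectivity. These extra rank hypotheses are satisfied in every place the lemma is invoked later in the paper.
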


A fan $F$ is \emph{maximal} if it is not properly contained in any other fan.  Oxley and Wu~\cite[Lemma~1.5]{ow2000} proved the following 
result concerning the ends of a maximal fan.

\begin{lemma}
  \label{fanEndsStrong}
  Let $M$ be a $3$-connected matroid 
  that is not a wheel or a whirl.
  Suppose $M$ has a maximal fan~$F$ of at least four elements, and let $f$ be an end of $F$.
  \begin{enumerate}
    \item If $f$ is a spoke element, then $M\ba f$ is $3$-connected.
    \item If $f$ is a rim element, then $M/f$ is $3$-connected.
  \end{enumerate}
\end{lemma}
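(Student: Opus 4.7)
By duality, it suffices to prove~(i). The plan is to assume $F = (f_1, \ldots, f_k)$ is a maximal fan with $k \ge 4$ and $f = f_1$ a spoke end, so $\{f_1, f_2, f_3\}$ is a triangle and $\{f_2, f_3, f_4\}$ is a triad of $M$. By \cref{fanEnds}\,(i), $\co(M \ba f_1)$ is $3$-connected, so I would assume for a contradiction that $M \ba f_1$ itself is not $3$-connected. Since $M$ is $3$-connected, $M \ba f_1$ has no loops, coloops, or parallel pairs (any such small circuit or cocircuit lifts to one in $M$ of size at most two), so the only source of failure of $3$-connectivity that $\co$ can repair is a series pair of $M \ba f_1$. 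Translating back, $M$ has a triad $T^* = \{f_1, a, b\}$ other than those already appearing in~$F$.

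Applying orthogonality between $T^*$ and the triangle $\{f_1, f_2, f_3\}$ gives $|T^* \cap \{f_1, f_2, f_3\}| \ge 2$, so after relabelling I may take $a \in \{f_2, f_3\}$. When $a = f_2$ and $b \notin F$, the tuple $(b, f_1, f_2, \ldots, f_k)$ is a strictly longer fan (prepending the new triad $\{b, f_1, f_2\}$ to the fan triangle $\{f_1, f_2, f_3\}$), contradicting maximality; so $b \in F$. When $a = f_3$, I would first apply strong cocircuit elimination between $T^*$ and $\{f_2, f_3, f_4\}$, eliminating $f_3$ and retaining $f_1$, and then use orthogonality with $\{f_1, f_2, f_3\}$ to force $f_2$ into the resulting cocircuit. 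Careful analysis of this cocircuit, which is contained in $\{f_1, f_2, b, f_4\}$, reduces the case $a = f_3$ either to the $a = f_2$ situation (via a triad $\{f_1, f_2, b\}$) or to a triad of the form $\{f_1, f_2, f_4\}$, which combines with $\{f_2, f_3, f_4\}$ to force the same kind of contradiction treated below.

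It remains to rule out the case that $\{f_1, f_2, f_j\}$ is a triad with $f_j \in F$ for some $j \ge 4$ (noting $j \ne 3$, since a triangle and a triad cannot coincide in a $3$-connected matroid on at least five elements, which is forced by the existence of a $4$-element fan). Strong cocircuit elimination between $\{f_1, f_2, f_j\}$ and $\{f_2, f_3, f_4\}$, retaining $f_1$ and invoking orthogonality with $\{f_1, f_2, f_3\}$, yields a cocircuit containing $\{f_1, f_3\}$ and contained in $\{f_1, f_3, f_4, f_j\}$. For $j = 4$ this cocircuit is the triad $\{f_1, f_3, f_4\}$, and a further elimination against $\{f_2, f_3, f_4\}$ forces a cocircuit of size at most three inside the triangle $\{f_1, f_2, f_3\}$, which is impossible ($3$-connectivity forbids cocircuits of size at most two, and $\{f_1, f_2, f_3\}$ itself being a triad would make it a $2$-separating set once $|E(M)| \ge 5$). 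For $j > 4$, iterated eliminations against the successive fan triads $\{f_4, f_5, f_6\}, \{f_6, f_7, f_8\}, \ldots$, interleaved with orthogonality against the fan triangles $\{f_3, f_4, f_5\}, \{f_5, f_6, f_7\}, \ldots$, drive the support of the resulting cocircuit back along $F$ until it lies in $\{f_1, f_2, f_3\}$, giving the same contradiction.

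\textbf{Main obstacle.} The substantive effort is the propagation argument for $b = f_j$ with $j > 4$: at each stage I must choose the elimination so that $f_1$ survives into the new cocircuit and then pair it with the right fan triangle so that orthogonality strips off an element at the far end of $F$. The reduction from $a = f_3$ to $a = f_2$ is also delicate, since the immediate triad $\{f_1, f_3, b\}$ does not prepend to $F$; the intermediate cocircuit-elimination step must be organised so that none of the intermediate triads produced undermine either the fan-extension argument or the subsequent propagation.
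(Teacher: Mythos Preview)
The paper does not supply its own proof of this lemma; it simply records that ``Oxley and Wu~[Lemma~1.5] proved the following result'' and then states it. So there is no in-paper argument to compare against, only the cited source.

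Your overall strategy is the natural one and matches what Oxley and Wu do: reduce via \cref{fanEnds} to showing that the spoke end $f_1$ lies in no triad, then use orthogonality with the fan triangle $\{f_1,f_2,f_3\}$ and cocircuit elimination against fan triads to derive a contradiction with maximality. The opening moves (forcing $T^*$ to meet $\{f_2,f_3\}$, and extending the fan when $T^*=\{f_1,f_2,b\}$ with $b\notin F$) are correct.

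There is, however, a genuine gap in the endgame. Your propagation claim---that iterated strong cocircuit elimination against the fan triads ``drives the support of the resulting cocircuit back along $F$ until it lies in $\{f_1,f_2,f_3\}$''---is not established, and as written it cannot be correct, because you never invoke the hypothesis that $M$ is not a wheel or whirl. That hypothesis is essential: in a wheel the maximal fan is all of $E(M)$, the wrap-around triad $\{f_1,f_2,f_k\}$ is present, and $M\ba f_1$ genuinely fails to be $3$-connected. So any argument that purports to reach a contradiction from $T^*=\{f_1,f_2,f_j\}$ with $j\ge 4$ without ever using ``not a wheel or whirl'' must contain an error. Concretely, when you eliminate $\{f_1,f_2,f_j\}$ against $\{f_2,f_3,f_4\}$ and then feed in orthogonality with the successive fan triangles, the resulting cocircuit can pick up more and more fan elements (for instance $\{f_1,f_3,f_4,f_j\}$, then after the next step a five-element cocircuit), rather than shrinking into $\{f_1,f_2,f_3\}$. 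A second, related gap is in the reduction from $a=f_3$: after strong elimination you may obtain the four-element cocircuit $\{f_1,f_2,f_4,b\}$, which is neither of your two listed outcomes. To close both gaps you will need a more careful induction that tracks the size of the derived cocircuit and, at the terminal step (typically $j=k$), explicitly uses that $F\neq E(M)$ or that no wrap-around triangle or triad exists---this is exactly where the wheel/whirl exclusion enters.
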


\subsection*{Retaining an $N$-minor}

Let $M$ and $N$ be matroids.
Throughout, when we say that $M$ \emph{has an $N$-minor}, we mean that $M$ has an isomorphic copy of $N$ as a minor.
Let $X \subseteq E(M)$.  To simplify exposition, we say $M$ \emph{has an $N$-minor with} $|X \cap E(N)| \le 1$, for example, to mean that $M$ has an isomorphic copy $N'$ of $N$ as a minor such that $|X \cap E(N')| \le 1$.

For a matroid $M$ with a minor $N$, we say an element $e \in E(M)$ is \emph{$N$-contractible} if $M/e$ has an $N$-minor, and $e$ is \emph{$N$-deletable} if $M \ba e$ has an $N$-minor.
We also say a set $X \subseteq E(M)$ is \emph{$N$-contractible} if $M/X$ has an $N$-minor, and $X$ is \emph{$N$-deletable} if $M\ba X$ has an $N$-minor.
An element $e \in E(M)$ is \emph{doubly $N$-labelled} if both $M/e$ and $M \ba e$ have $N$-minors.

The next lemma has a straightforward proof.

\begin{lemma}
  \label{m2.7}
  Let $(X,Y)$ be a $2$-separation of a connected matroid $M$ and let $N$ be a $3$-connected minor of $M$.  Then $\{X, Y\}$ has a member $U$ such that $|U \cap E(N)| \leq 1$.  Moreover, if $u \in U$, then
  \begin{enumerate}
    \item $M/u$ has an $N$-minor if $M/u$ is connected, and
    \item $M \backslash u$ has an $N$-minor if $M \backslash u$ is connected.
  \end{enumerate}
\end{lemma}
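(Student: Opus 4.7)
\emph{Proof plan.} The plan is to split the argument into two parts: the existence of $U$, and the moreover clause.

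For the existence of $U$, I would take any realization $N' = M/C \ba D$ of the $N$-minor with $N' \cong N$. Since minor operations cannot increase the connectivity function, $\lambda_{N'}(X \cap E(N')) \le \lambda_M(X) = 1$. If both $|X \cap E(N')|$ and $|Y \cap E(N')|$ were at least two, we would obtain a $2$-separation of $N'$, contradicting the $3$-connectedness of $N$. Hence one of $X, Y$ meets $E(N')$ in at most one element, and this will be $U$.

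For the moreover clause, fix $u \in U$. By duality under $M \leftrightarrow M^*$, it suffices to establish (i), that $M/u$ has an $N$-minor whenever $M/u$ is connected. If $u \in C$, then $M/u$ contains $M/C \ba D = N'$ as a minor and we are done immediately. The remaining cases are $u \in D$ and $u \in E(N') \cap U$, and here one must use the hypothesis that $M/u$ is connected. A short rank computation on $\lambda_{M/u}(X - u)$, combined with \cref{swapSepSides}, shows that $M/u$ connected forces $u \in \cl^*(X - u)$; equivalently, $u$ lies in some cocircuit of $M$ contained in $X$. This is the key leverage: using the $2$-separation $(X, Y)$, such a cocircuit lets one interchange the role of $u$ with that of another element of $X$ in the realization of the $N$-minor, producing an alternative realization in which $u$ is contracted.

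The main obstacle will be the case $u \in E(N') \cap U$, since here one must exhibit a genuinely different $N$-minor that does not use $u$ at all. Since $|X| \ge 2$, there is some $v \in X - u$ lying in $C \cup D$, and the cocircuit through $u$ contained in $X$ should allow the construction of a new realization in which $v$ plays the role previously played by $u$, while $u$ itself is contracted. A routine check using the fact that $|X \cap E(N')| \le 1$, so that almost all of the $N$-minor already lives on $Y$, confirms that the resulting matroid remains isomorphic to $N$. Statement~(ii) then follows by the dual argument, with closure replacing coclosure and circuits replacing cocircuits throughout.
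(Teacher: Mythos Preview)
The paper does not actually give a proof of this lemma; it states only that ``the next lemma has a straightforward proof'' and moves on. So there is no paper argument to compare against, and I can only assess your proposal on its own merits.

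Your argument for the existence of $U$ is correct and is exactly the standard one: monotonicity of $\lambda$ under taking minors forces one side of the $2$-separation to meet $E(N')$ in at most one element, since otherwise $N'$ would inherit a $2$-separation.

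For the moreover clause, your reduction by duality and the case $u\in C$ are fine, and your computation that $M/u$ connected forces $u\notin\cl(Y)$ (equivalently $u\in\cl^*(X-u)$) is correct and is indeed the key hypothesis one must exploit. However, the ``interchange'' step that follows is not justified, and as written it does not work. Having a cocircuit $C^*\subseteq X$ through $u$ does not, by itself, let you swap the $N$-label on $u$ with that on some $v\in C^*-u$: such swaps (as used elsewhere in the paper, e.g.\ \cref{freeswitch}) require a \emph{series pair} in the appropriate intermediate minor, not merely a common cocircuit in $M$, and there is no reason the relevant pair becomes a series pair after performing the other contractions and deletions. Your hedged phrasing (``should allow'', ``a routine check\ldots confirms'') reflects that this step has not actually been carried out.

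The clean way to finish is to use the $2$-sum decomposition $M=M_1\oplus_2 M_2$ along $(X,Y)$ with basepoint $p$. Since $N'$ is $3$-connected and $|X\cap E(N')|\le 1$, one checks that $N'$ is (isomorphic to) a minor of $M_2$. On the other hand, when $u\in X$ and $u\notin\cl_M(Y)$, contracting $u$ preserves $M_2$ as the $Y$-side of a $2$-sum (or, when $|X|=2$, yields $M/u\cong M_2$ with $p$ relabelled), so $M_2$---and hence $N'$---is a minor of $M/u$. This is presumably the ``straightforward'' argument the authors had in mind, and it replaces your unproven interchange step with a structural fact about minors of $2$-sums.
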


The dual of the following is proved in \cite{bcosw2019,bs2014}.

%
\begin{lemma}
    \label{doublylabel}
Let $N$ be a $3$-connected minor of a $3$-connected matroid $M$. Let $(X, \{z\}, Y)$ be a cyclic $3$-separation of $M$ such that $M\ba z$ has an $N$-minor with $|X \cap E(N)| \le 1$. Let $X' = X-\cocl(Y)$
and $Y' = \cocl(Y) - z$.
Then
\begin{enumerate}
\item each element of $X'$ is $N$-deletable; and
\item at most one element of $\cocl(X)-z$ is not $N$-contractible, and if such an element~$x$ exists, then $x \in X' \cap \cl(Y')$ and $z \in \cocl(X' - x)$.
\end{enumerate}
\end{lemma}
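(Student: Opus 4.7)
The plan is straightforward: the statement is obtained from the cited result of \cite{bcosw2019,bs2014} by passing to the dual matroid, so the proof consists almost entirely of verifying that every ingredient dualises correctly. I would not try to reprove the statement from scratch; instead I would set up the duality carefully and invoke the cited lemma applied to $M^*$ and $N^*$.

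First I would dualise the hypotheses. Since $\lambda_M = \lambda_{M^*}$, the partition $(X,\{z\},Y)$ is a $3$-separation in $M^*$ of the same connectivity. A cyclic $3$-separation in $M$ is, by definition, a vertical $3$-separation in $M^*$, so $(X,\{z\},Y)$ is a vertical $3$-separation of $M^*$. Because $M \ba z$ has an $N$-minor, $(M\ba z)^* = M^*/z$ has an $N^*$-minor. The condition $|X \cap E(N)|\le 1$ dualises to $|X \cap E(N^*)|\le 1$. Moreover $\cocl_M(Y) = \cl_{M^*}(Y)$, so the sets $X' = X - \cocl_M(Y)$ and $Y' = \cocl_M(Y) - z$ appearing in our statement are precisely the sets $X - \cl_{M^*}(Y)$ and $\cl_{M^*}(Y) - z$ appearing in the dual lemma. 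Hence all hypotheses of the (cited) dual statement are met for $M^*$ and $N^*$.

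Next I would read off the conclusions and translate them back. Being $N^*$-contractible in $M^*$ is equivalent to being $N$-deletable in $M$, because $(M^*/x)^* = M \ba x$. Applying conclusion~(i) of the dual lemma to every $x \in X'$ therefore yields that $M \ba x$ has an $N$-minor, giving (i). Similarly, being $N^*$-deletable in $M^*$ is equivalent to being $N$-contractible in $M$, and $\cl_{M^*}(X) - z = \cocl_M(X) - z$, so conclusion~(ii) of the dual lemma says that at most one element of $\cocl_M(X) - z$ fails to be $N$-contractible. If such an element $x$ exists, the dual lemma further places it in $X' \cap \cocl_{M^*}(Y')$ and asserts $z \in \cl_{M^*}(X' - x)$; using $\cocl_{M^*} = \cl_M$ and $\cl_{M^*} = \cocl_M$, this reads $x \in X' \cap \cl_M(Y')$ and $z \in \cocl_M(X' - x)$, matching (ii).

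There is no real obstacle in this argument; the only thing to check carefully is that the labelling of $X$ and $Y$ is preserved under the dualisation (in particular, that the $N$-minor is kept on the correct side, so that the roles of $X$ and $Y$ are not accidentally swapped in the cited result) and that the closure/coclosure identities are applied in the right direction. Once those bookkeeping details are in place, the statement follows immediately by citation.
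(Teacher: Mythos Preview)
Your proposal is correct and matches the paper's own treatment: the paper simply states that the dual of this lemma is proved in \cite{bcosw2019,bs2014} and gives no further argument, so your careful verification that every hypothesis and conclusion dualises correctly is exactly the (implicit) content of the paper's citation.
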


Suppose $C$ and $D$ are disjoint subsets of $E(M)$ such that $M/C \ba D \cong N$.
We call the ordered pair $(C,D)$ an \emph{$N$-labelling of $M$}, and say that each $c \in C$ is \emph{$N$-labelled for contraction}, and each $d \in D$ is \emph{$N$-labelled for deletion}. 
We also say a set $C' \subseteq C$ is \emph{$N$-labelled for contraction}, and $D' \subseteq D$ is \emph{$N$-labelled for deletion}.
An element $e \in C \cup D$ or a set $X \subseteq C \cup D$
is \emph{$N$-labelled for removal}.

Let $(C,D)$ be an $N$-labelling of $M$, and let $c \in C$, $d \in D$, and $e \in E(M)-(C \cup D)$.
Then, we say that the ordered pair
$((C-c) \cup d,(D-d)\cup c)$ is 
obtained from $(C,D)$ by \emph{switching the $N$-labels on $c$ and $d$}.
Similarly, 
$((C-c) \cup e,D)$ (or $(C,(D-d) \cup e)$, respectively) is 
obtained from $(C,D)$ by \emph{switching the $N$-labels on $c$} (respectively, $d$) \emph{and $e$}.

The following straightforward lemma, 
which gives a sufficient condition for retaining a valid $N$-labelling after an $N$-label switch,
will be used freely.

\begin{lemma}
  \label{freeswitch}
  Let $M$ be a $3$-connected matroid, let $N$ be a $3$-connected minor of $M$ with $|E(N)| \ge 4$, and let $(C,D)$ be an $N$-labelling of $M$. 
  Suppose $\{d,e\}$ is a parallel pair in $M/c$, for some $c \in C$.
  Let $(C',D')$ be obtained from $(C,D)$ by switching the $N$-labels on $d$ and $e$; then $(C',D')$ is an $N$-labelling.
\end{lemma}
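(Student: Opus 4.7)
The plan is to reduce the lemma to the elementary fact that deleting either element of a parallel pair yields isomorphic matroids. We have $d\in D$ and $e\in E(M)-(C\cup D)$, so the switching operation produces $C'=C$ and $D'=(D-d)\cup e$, and the goal is to verify $M/C\ba(D-d)\ba e \cong N$, knowing already that $M/C\ba(D-d)\ba d = M/C\ba D \cong N$.

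The first step is to show that $d$ and $e$ lie in a common parallel class of $M/C$. By hypothesis, $\{d,e\}$ is a circuit of $M/c$ for the specified $c\in C$. For any $c'\in C-c$, we have $c'\neq d,e$ since $d,e\notin C$, and contracting $c'$ preserves the dependence of $\{d,e\}$. Using the fact that parallelism is an equivalence relation on the non-loop elements of a matroid, an induction on $|C-c|$ shows that at each intermediate contraction, either $d$ and $e$ remain a non-loop parallel pair, or they simultaneously become loops (the latter occurring precisely when the element being contracted is parallel to $d$ and $e$ in the current matroid). Either way, $d$ and $e$ belong to a common parallel class of $M/C$.

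Deletion of the elements of $D-d$, none of which equals $d$ or $e$, does not disturb this parallel class, so $d$ and $e$ still lie in a common parallel class of $L:=M/C\ba(D-d)$. Hence the bijection on $E(L)$ that swaps $d$ with $e$ and fixes every other element is a matroid automorphism of $L$, and it restricts to an isomorphism $L\ba e \to L\ba d$. Chaining these,
\[
  M/C'\ba D' \;=\; L\ba e \;\cong\; L\ba d \;=\; M/C\ba D \;\cong\; N,
\]
which verifies that $(C',D')$ is an $N$-labelling.

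There is essentially no obstacle. The only point worth flagging is that $d$ and $e$ may both become loops of $M/C$ along the way, but even in that degenerate case the swap-labels map remains a valid isomorphism between $L\ba d$ and $L\ba e$, so the conclusion goes through uniformly.
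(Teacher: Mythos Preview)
Your proof is correct. The paper does not actually prove this lemma --- it is introduced as a ``straightforward lemma'' to be ``used freely'' --- so there is no approach to compare against. Your reading that $d\in D$ and $e\in E(M)-(C\cup D)$ matches the paper's naming convention for label switches and is the case used throughout; the swap-automorphism argument you give is exactly the elementary fact the authors have in mind.
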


\subsection*{Delta-wye exchange}

Let $M$ be a matroid with a triangle $\Delta=\{a,b,c\}$.
Consider a copy of $M(K_4)$ having $\Delta$ as a triangle with $\{a',b',c'\}$ as the complementary triad labelled such that $\{a,b',c'\}$, $\{a',b,c'\}$ and $\{a',b',c\}$ are triangles.
Let $P_{\Delta}(M,M(K_4))$ denote the generalised parallel connection of $M$ with this copy of $M(K_4)$ along the triangle $\Delta$.
Let $M'$ be the matroid $P_{\Delta}(M,M(K_4))\backslash\Delta$ where the elements $a'$, $b'$ and $c'$ are relabelled as $a$, $b$ and $c$ respectively.
This matroid $M'$ 
is said to be obtained from $M$ by a \emph{\dY} on the triangle~$\Delta$.
Dually, a matroid $M''$ is obtained from $M$ by a \emph{\Yd} on the triad $\{a,b,c\}$
if $(M'')^*$ is obtained from $M^*$ by a \dY\ on $\{a,b,c\}$.

\section{Triangles and triads}
\label{sectris}

Let $M$ be a $3$-connected matroid and let $N$ be a $3$-connected minor of $M$.
If, for a triangle~$T$ and for all distinct $a,b \in T$, none of $M/a/b$, $M/a\ba b$, $M\ba a/b$, and $M\ba a\ba b$ have an $N$-minor, then $T$ is an \emph{\unfortunate\ triangle}.
Similarly, a triad $T^*$ of $M$ is an \emph{\unfortunate\ triad} if, for all distinct $a,b \in T^*$, none of $M/a/b$, $M/a\ba b$, $M\ba a/b$, and $M\ba a\ba b$ have an $N$-minor.
In this section, we show that if $M$ has a triangle or triad that is not $N$-grounded, then either $M$ or $M'$, which can be obtained from $M$ by performing a $\Delta$-$Y$ or $Y$-$\Delta$ exchange, has an $N$-detachable pair.

When $|E(N)| \ge 4$, no element of an \unfortunate\ triangle is $N$-contractible.  As we use this straightforward fact frequently, we state it as a lemma below.

\begin{lemma}
  \label{basicunfortunate}
  Let $M$ be a $3$-connected matroid with a $3$-connected minor $N$ where 
$|E(N)| \ge 4$.
If $T$ is an \unfortunate\ triangle of $M$ with $x \in T$, then $M/x$ does not have an $N$-minor.
\end{lemma}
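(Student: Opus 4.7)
The plan is to argue by contradiction: suppose $M/x$ has an $N$-minor, and derive that one of the four forbidden minor configurations from the definition of \unfortunate\ must occur.

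First I would label the other two elements of the triangle as $T = \{x,y,z\}$. Since $T$ is a triangle of $M$, contracting $x$ makes $\{y,z\}$ a parallel pair in $M/x$. This is the single structural observation driving the argument.

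Next, I would invoke the hypothesis $|E(N)| \ge 4$ together with $3$-connectedness of $N$ to conclude that $N$ contains no parallel pair. Writing the $N$-minor of $M/x$ as $(M/x)/C\ba D$ for some disjoint $C,D \subseteq E(M)-x$, this means that $\{y,z\}$ cannot survive intact: at least one of $y,z$ must lie in $C \cup D$. A short case check on which of $y$ or $z$ lies in $C$ versus $D$ then shows that at least one of the matroids $M/x/y$, $M/x\ba y$, $M/x/z$, $M/x\ba z$ has an $N$-minor. This directly contradicts the definition of an \unfortunate\ triangle applied to the pair $\{x,y\}$ or $\{x,z\}$.

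There is essentially no obstacle here; the only mild subtlety is making sure, when $y \in C$ (say), that the element $z$ (which becomes a loop of $(M/x)/y$) does not interfere with the identification of $M/x/y$ as having an $N$-minor, but this is immediate because loops can always be deleted without affecting the minor relation. Hence the statement follows in a short paragraph once the parallel pair in $M/x$ has been identified.
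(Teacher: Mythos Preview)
Your proof is correct and follows essentially the same approach as the paper: both identify the parallel pair $\{y,z\}$ in $M/x$ and use $|E(N)|\ge 4$ with $3$-connectedness of $N$ to force the removal of one of $y,z$, yielding the contradiction. The paper's version is slightly more streamlined---it observes directly that $M/x\ba y$ has an $N$-minor without performing the case split on $C$ versus $D$---but the underlying argument is the same.
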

\begin{proof}
  Let $T = \{x,y,z\}$.  Since $\{y,z\}$ is a parallel pair in $M/x$, and $|E(N)| \ge 4$, if $M/x$ has an $N$-minor, then $M /x \ba y$ has an $N$-minor.  Thus $T$ is not \unfortunate; a contradiction.
\end{proof}

We now prove the main result of this section.
Subject to this \lcnamecref{unfortunatetri}, we can then focus on the case where every triangle or triad of $M$ is \unfortunate.

\begin{theorem}
  \label{unfortunatetri}
  Let $M$ be a $3$-connected matroid, 
  and let $N$ be a $3$-connected minor of $M$ with $|E(N)| \ge 4$, where $|E(M)|-|E(N)| \ge 5$.
  Then either
  \begin{enumerate}
    \item $M$ has an $N$-detachable pair, or\label{ut1}
    \item there is a matroid $M'$ obtained by performing a single $\Delta$-$Y$ or $Y$-$\Delta$ exchange on $M$ such that $M'$ has 
      an $N$-detachable pair, or\label{ut2}
    \item each triangle or triad of $M$ is \unfortunate.\label{ut3}
  \end{enumerate}
\end{theorem}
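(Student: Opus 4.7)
By duality (applying the argument to $M^*$ with $Y$-$\Delta$ replacing $\Delta$-$Y$), we may assume $T = \{a, b, c\}$ is a triangle of $M$ that is not \unfortunate, so there exist distinct $x, y \in T$ and a combination of contractions and deletions yielding an $N$-minor. A key initial observation is that if $M/x/y$ has an $N$-minor, then since the remaining element $z \in T$ is a loop of $M/x/y$ and $N$ has no loops (as $|E(N)| \ge 4$), $M/x/y \ba z$ also has an $N$-minor; using the parallelism of $\{y, z\}$ in $M/x$, we conclude that $M/x \ba y$ has an $N$-minor. An analogous argument handles $M \ba x / y$. Thus the failure of $T$ to be \unfortunate\ reduces to two cases: (A) some $2$-element subset $\{x, y\} \subseteq T$ is $N$-deletable (so $M \ba x \ba y$ has an $N$-minor); or (B) no $2$-element subset of $T$ is $N$-deletable, but some $\{x, y\} \subseteq T$ has $M/x \ba y$ with an $N$-minor.

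In Case~(A), if $M \ba x \ba y$ is $3$-connected then $\{x, y\}$ is an $N$-detachable pair and outcome~(i) holds. Otherwise $M \ba x \ba y$ has a $2$-separation (or a small cocircuit). I would apply \cref{m2.7} to place the $N$-minor essentially on one side, then use \cref{fcllemma} to normalize the separation to be fully closed. Combining this with Tutte's Triangle Lemma (\cref{ttL}) yields triads of $M$ that interact with $T$; together with $T$ these form a fan $F$, whose end elements admit single-element $3$-connected removals by \cref{fanEndsStrong}. Iterating---using the slack $|E(M)| - |E(N)| \ge 5$ to keep the $N$-minor intact---we either produce an $N$-detachable pair in $M$ directly, or reduce to a configuration handled by Case~(B).

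In Case~(B), we perform a $\Delta$-$Y$ exchange on $T$ to obtain $M'$, in which $T$ becomes a triad. We first verify that $M'$ has an $N$-minor using the minor-preservation properties of $\Delta$-$Y$ applied to the $N$-minor of $M/x \ba y$, which is amenable to transfer because both $x$ and $y$ lie in $T$ and the triangle/triad exchange ``absorbs'' the contraction-deletion asymmetry within $T$. Then, in $M'$, we apply a Case~(A)-style analysis, now focused on the triad $T$ of $M'$: by Bixby's Lemma (\cref{bixbyL}) applied to elements near $T$ in $M'$, together with the structural information transferred from $M$ (in particular the obstruction to $N$-deletability of pairs in $T$, which becomes dual structural information in $M'$), we obtain an $N$-detachable pair in $M'$, yielding outcome~(ii).

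The main technical obstacle is the $2$-separation analysis in Case~(A): the uncrossing arguments (\cref{onetrick}) and fan-analysis can become intricate when multiple $2$-separations interact with $T$ and with the $N$-minor constraints, and bookkeeping the $N$-minor across successive separations is delicate. A secondary but important challenge is confirming minor-preservation through the $\Delta$-$Y$ exchange in Case~(B), which requires tracking how the $N$-minor's contraction and deletion sets interact with the exchanged triangle, and ensuring that the new triad structure of $T$ in $M'$ provides enough flexibility for a detachable pair to be extracted.
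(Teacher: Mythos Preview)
Your reduction to Cases~(A) and~(B) is valid, but the execution of both cases has genuine gaps, and you miss the central mechanism of the proof.

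In Case~(A), you propose to analyze a $2$-separation of $M\ba x\ba y$, extract triads via Tutte's Triangle Lemma, build a fan, and ``iterate''. But Tutte's Triangle Lemma concerns $M\ba a$ for a single $a$ in a triangle, not $M\ba x\ba y$; it does not directly produce the triads you need from a $2$-separation of the double deletion. More importantly, nothing in your sketch explains why the iteration terminates or why the resulting fan structure yields a detachable pair rather than another obstructed $2$-separation. In Case~(B), you propose to perform the $\Delta$-$Y$ exchange on $T$ and then run a ``Case~(A)-style analysis'' on the triad $T$ in $M'$. But the dual of Case~(A) would need a pair in the triad that is $N$-contractible in $M'$, and analyzing $M'/x/y$ may again fail to be $3$-connected; you give no argument that this does not simply loop back to a Case~(B)-type obstruction in $M'$ (whose $Y$-$\Delta$ exchange returns $M$). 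The phrase ``minor-preservation properties of $\Delta$-$Y$'' is also imprecise: what is needed is not that $M'$ has an $N$-minor, but that a specific double-contraction or double-deletion of $M'$ does.

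The key idea you are missing is the use of the Splitter Theorem combined with the $\Delta$-$Y$ exchange as a \emph{conversion} device between deletion and contraction. The paper's core move (in the case where $T$ meets no triad) is: use Tutte's Triangle Lemma to find $a\in T$ with $M\ba a$ $3$-connected and having an $N$-minor; apply the Splitter Theorem to $M\ba a$ to get $e$ with $M\ba a\ba e$ or $M\ba a/e$ $3$-connected and having an $N$-minor; in the first case outcome~(i) holds, and in the second, perform $\Delta$-$Y$ on $T$ so that $M'/a\cong M\ba a$, giving $M'/a/e$ $3$-connected with an $N$-minor, hence outcome~(ii). When $T$ lies in a fan, the paper replaces Tutte's Triangle Lemma by \cref{fanEndsStrong} to obtain the first $3$-connected single-element removal, and handles wheels and whirls separately; the label-switching along the fan then produces the second element. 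This single-removal-plus-Splitter-plus-exchange strategy avoids the direct $2$-separation analysis of $M\ba x\ba y$ entirely, and in particular sidesteps the termination problem in your iteration.
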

\begin{proof}
  Suppose $M$ has a triangle or triad $T$ that is not \unfortunate.
  First, suppose that $M$ is a wheel or a whirl.
  By taking the dual, if necessary, we may assume that $T$ is a triangle.
  Let $T = \{x,y,z\}$ where $y$ is a rim element and $x$ and $z$ are spoke elements with respect to a fan ordering of $E(M)$.
  Since $T$ is not \unfortunate, it follows that either $M \ba x$ or $M \ba z$ has an $N$-minor.
  If $M$ is a wheel (respectively, a whirl), then $M/y \ba z$ is a wheel (respectively, a whirl) of rank $r(M)-1$.  In particular, $M/y \ba z$ is $3$-connected since $|E(M)| > 6$.
  Let $M'$ be the matroid obtained from $M$ by performing a \dY\ on $T$.  Then $M/y \ba z \cong M'/z/x$.
  As $M\ba x$ or $M \ba z$ has an $N$-minor, $N$ is a minor of a wheel or a whirl of rank $r(M)-1$, so $M'/z/x$ has an $N$-minor, and $\{x,z\}$ is an $N$-detachable pair of $M'$, satisfying \cref{ut2}.

  \smallskip

  Now, suppose $T$ is contained in a maximal fan~$F$ of size at least five.
  We start by proving the following claim:

  \begin{sublemma}
    \label{dcpair}
    Suppose there are distinct elements $c \in E(M)$ and $d \in F$ such that $M/c\ba d$ is $3$-connected and has an $N$-minor. 
    Then \cref{ut2} holds.
  \end{sublemma}
  \begin{slproof}
    Since $M\ba d$ is $3$-connected, \cref{fanEnds} implies that if $d$ is an end of $F$, it is a spoke element.
    Now $d$ is either an internal element or a spoke of $F$, so it is contained in a triangle $T_1$.  Let $M'$ be the matroid obtained from $M$ by performing a \dY\ on $T_1$.  Then $M \ba d$ is isomorphic to $M' / d$.  Hence $M' / d / c$ is $3$-connected and has an $N$-minor, as required.
  \end{slproof}

  By \cref{dcpair} and its dual, we can now look for a pair of elements, at least one of which is in $F$, whose removal in any way preserves $3$-connectivity and an $N$-minor.
  \Cref{fanEndsStrong} provides one candidate element for removal; to find the second, we require that the resulting matroid, after the element is removed, is not a wheel or a whirl.

  \begin{sublemma}
    \label{nowheel}
    The triangle or triad $T$ is contained in a maximal fan~$F'$ with ordering $(x_1,x_2,\dotsc,x_\ell)$, for $\ell \ge 5$, such that, up to duality, $\{x_1,x_2,x_3\}$ is a triangle, and $M \ba x_1$ is $3$-connected and not a wheel or a whirl.
  \end{sublemma}
  \begin{slproof}
    We have that $T$ is contained in a maximal fan~$F$ of size at least five.
    We may assume, by reversing the ordering if necessary, that $T \subseteq F-x_\ell$, and, by duality, that $x_1$ is a spoke element of $F$, so $\{x_1,x_2,x_3\}$ is a triangle.
    Then, by \cref{fanEndsStrong}, $M \ba x_1$ is $3$-connected.

    Towards a contradiction, suppose $M \ba x_1$ is a wheel or a whirl.
    Then $x_2$ is in a triangle of $M\ba x_1$ that meets $x_3$ or $x_4$, by orthogonality with the triad $\{x_2,x_3,x_4\}$ of $M \ba x_1$.
    If $\{x_2,x_3\}$ is contained in a triangle of $M \ba x_1$, then $\{x_1,x_2,x_3\}$ is contained in a $4$-element segment of $M$ that intersects the triad $\{x_2,x_3,x_4\}$ in two elements, which contradicts orthogonality.
    So $M \ba x_1$ has a triangle $\{x_2,x_4,q\}$, say, where $q \in E(M \ba x_1) - x_3$.

  Suppose $|F| > 6$.  Then $\{x_4,x_5,x_6\}$ is a triad, 
  and, by \cite[Lemma~3.4]{ow2000}, the only triangle of $M$ containing $x_4$ is $\{x_3,x_4,x_5\}$.
  Since $\{x_2,x_4,q\}$ is also a triangle of $M$, this is a contradiction.
  So $|F|=5$.

  Now $(x_1,x_3,x_2,x_4,q)$ is a fan ordering of $M$, and this fan contains $T$.
  It follows from orthogonality that $\{x_4,q\}$ is not contained in a triad, so this fan ordering extends to a maximal fan~$F'$ where $q$ is an end.
  As $M \ba q$ is $3$-connected by \cref{fanEndsStrong}, if $M \ba q$ is not a wheel or a whirl, then \cref{nowheel} holds for the fan~$F'$.

  So we may assume that $M \ba q$ is a wheel or a whirl.
  Now $(x_1,x_2,x_3,x_4,x_5)$ is a fan ordering in $M \ba q$ that extends to a fan ordering $(x_1,x_2,\dotsc,x_\ell)$ of $E(M \ba q)$.
  Observe that $\ell \ge 8$ and $\ell$ is even.
  In $M \ba x_1$, there is a fan with ordering $(q,x_2,x_4,x_3,x_5)$ that extends to a fan ordering of $E(M \ba x_1)$.
  So there is a triad containing $\{x_3,x_5\}$, and it meets $\{x_6,x_7\}$ by orthogonality, but if it contains $x_6$, then $\{x_3,x_4,x_5,x_6\}$ is a cosegment that intersects the triangle $\{x_5,x_6,x_7\}$ in two elements; a contradiction.
  So $\{x_3,x_5,x_7\}$ is a triad.
  If $\ell > 8$, then this triad intersects the triangle $\{x_7,x_8,x_9\}$ in a single element; a contradiction.
  So $|E(M)|=9$, and hence $r(M)=4$.
  It now follows that $q$ is in a triangle $\{q,x_6,x_8\}$.
  By circuit elimination, $\{x_2,x_4,x_6,x_8\}$ contains a circuit.
  As this set does not contain a triangle, $\{x_2,x_4,x_6,x_8\}$ is a circuit, so $M \ba q$ is a wheel.
  Since $\{x_2,x_4,q\}$ and $\{x_6,x_8,q\}$ are circuits of $M$, it follows that $M$ is binary.
  So $M$ has no $U_{2,4}$-minor, in which case $|E(N)| \ge 5$, and $|E(M)| \ge 10$; a contradiction.
  \end{slproof}

  Let $F_1$ be the fan $F'$ of \cref{nowheel} with ordering $(x_1,\dotsc,x_\ell)$.
  Now $M \ba x_1$ is $3$-connected, and is neither a wheel nor a whirl.

  \begin{sublemma}
    \label{justdone}
    There is an $N$-labelling such that $x_1$ is $N$-labelled for deletion, and either $x_2$ or $x_3$ is $N$-labelled for contraction.
  \end{sublemma}
  \begin{slproof}
    First, observe that if either $x_2$ or $x_3$ is $N$-labelled for contraction, then, since $\{x_1,x_2,x_3\}$ is a triangle and $|E(N)| \ge 4$, it follows that
    $x_1$ is $N$-labelled for deletion up to an $N$-label switch with $x_3$ or $x_2$ respectively, using \cref{freeswitch}.
    So it suffices to show that either $x_2$ or $x_3$ is $N$-labelled for contraction.

    Since $F_1$ contains the triangle or triad $T$ that is not \unfortunate, there is an internal element $x_j$ of $F_1$ that is $N$-labelled for removal.
    Suppose $x_2$ is $N$-labelled for deletion.
    Then $\{x_3,x_4\}$ is a series pair in $M \ba x_2$.
    It follows that, after possibly performing an $N$-label switch on $x_3$ and $x_4$, the element $x_3$ is $N$-labelled for contraction.

    Similarly, if $x_j$ is $N$-labelled for deletion for some $j \ge 3$, then, as $\{x_{j-1},x_j\}$ is contained in a triad, $x_{j-1}$ is $N$-labelled for contraction, up to a possible $N$-label switch.
    Likewise, if $x_j$ is $N$-labelled for contraction, for some $j > 3$, then, there is a triangle containing $\{x_{j-1},x_j\}$; after a possible $N$-label switch, $x_{j-1}$ is $N$-labelled for deletion.
    By repeating this process, we obtain an $N$-labelling where either $x_2$ or $x_3$ is $N$-labelled for contraction, as required.
    This proves the claim.
  \end{slproof}

  Consider the matroid $M \ba x_1$.  By \cref{justdone}, this matroid has an $N$-labelling where either $x_2$ or $x_3$ is $N$-labelled for contraction.
  The set $F_1 - x_1$ is a $4$-element fan 
  that is contained in a maximal fan $F_2$, with ordering $(y_1,y_2,\dotsc,y_t)$, for some $t \ge 4$.
  If $x_2$ is $N$-labelled for contraction and $x_2$ is an end of $F_2$, then, as $x_2$ is a rim, the matroid $M \ba x_1 / x_2$ is $3$-connected by \cref{fanEndsStrong}, and \cref{ut2} holds by \cref{dcpair}.

  So we may assume that either $x_3$ is $N$-labelled for contraction, or $x_2$ is not an end of $F_2$.  In either case, $F_2$ has an internal element that is $N$-labelled for contraction.
  By \cref{fanEndsStrong}, either $y_1$ is a spoke and $M \ba x_1 \ba y_1$ is $3$-connected, or $y_1$ is a rim and $M \ba x_1/y_1$ is $3$-connected.
  Using a similar argument as in \cref{justdone}, we can iteratively switch $N$-labels so that $y_1$ is $N$-labelled for deletion if it is a spoke, or $N$-labelled for contraction if it is a rim.
  It follows that \cref{ut2} holds.

  \medskip
  Now suppose $T$ is contained in a maximal $4$-element fan $F$.
  Let $(f_1,f_2,f_3,f_4)$ be a fan ordering of $F$ where $\{f_1,f_2,f_3\}$ is a triangle.
  Since $F$ contains $T$, which is not \unfortunate, at least one of $f_2$ and $f_3$, is $N$-labelled for removal.  Up to duality and switching labels on $f_2$ and $f_3$, we may assume that $f_2$ is $N$-labelled for deletion.
  Since $\{f_3,f_4\}$ is a series pair in $M \ba f_2$, we may also assume, up to an $N$-label switch, that $f_4$ is $N$-labelled for contraction.
  Now $M / f_4$ is $3$-connected, by \cref{fanEndsStrong}, and has an $N$-minor.
  Let $M'$ be the matroid obtained by \Yd\ on the triad $\{f_2,f_3,f_4\}$.  Then $M / f_4$ is isomorphic to $M' \ba f_4$.

  Now $\{f_1,f_2,f_3\}$ is a triangle of $M/f_4$ that does not meet a triad, so $M/f_4$ is not a wheel or a whirl.
  Hence, by the Splitter Theorem, there is an element $e \in E(M/f_4)$ such that either $M/f_4/e$ or $M/f_4 \ba e$ is $3$-connected and has an $N$-minor.
  In the latter case, $M/f_4 \ba e$ is isomorphic to $M' \ba f_4 \ba e$, so \cref{ut2} holds.

  \medskip
  Finally, we may assume that $T$ is a triangle that is not contained in a $4$-element fan.
  Let $T= \{a,b,c\}$.
  We claim that, up to relabelling, $M \ba a$ and $M \ba b$ have $N$-minors.
  Indeed, if $c$ is $N$-labelled for contraction, then, since $\{a,b\}$ is a parallel pair in $M/c$, both $M \ba a$ and $M \ba b$ have $N$-minors.
  On the other hand, if $T$ has no elements that are $N$-labelled for contraction, then, as $T$ is not \unfortunate, it has at most one element that is not $N$-labelled for removal, and, by labelling this element $c$, we have that $M\ba a$ and $M\ba b$ have $N$-minors.

  Since there is no triad meeting $T$, Tutte's Triangle Lemma implies that at least one of $M \ba a$ and $M \ba b$ is $3$-connected.
  Without loss of generality, let $M \ba a$ be $3$-connected.
  Now $M \ba a$ has a proper $N$-minor, so if $M \ba a$ is not a wheel or a whirl, then, by the Splitter Theorem, there is some element $x \in E(M \ba a)$ such that $M \ba a \ba x$ or $M \ba a / x$ is $3$-connected and has an $N$-minor.  In the first case, $M$ has an $N$-detachable pair as required, so assume the latter.
  Let $M'$ be the matroid obtained by a $\Delta$-$Y$ exchange on $T$.  Then $M \ba a$ is isomorphic to $M' / a$.
  In particular, $M' / a$ has an $N$-minor.
  Thus $\{a,x\}$ is an $N$-detachable pair in $M'$, satisfying \cref{ut2}.

  It remains to consider the case where $M\ba a$ is a wheel or a whirl.
  Since $M$ has no $4$-element fans, for every triad $T^*$ of $M \ba a$, 
  we have that $T^* \cup a$ is a cocircuit of $M$.
  By orthogonality, $T-a$ 
  has non-empty intersection with each such $T^*$. 
  If a wheel or whirl has rank more than four, then no two elements meet every triad.  
  So $r(M \ba a) \le 4$, and thus $|E(M \ba a)| \leq 8$.
  Thus, in the only remaining case $|E(M)| = 9$ and $|E(N)| = 4$, so $N \cong U_{2,4}$.
  Since $M \ba a$ has an $N$-minor, $M \ba a$ is the rank-$4$ whirl.

  Let $d$ be a spoke of $M \ba a$.  Then it is easily verified that $M \ba d$ is $3$-connected and has an $N$-minor.
  Moreover, $M \ba d$ is not a wheel or a whirl, and $d$ is in distinct triangles $T_1$ and $T_2$ of $M$.
  By the Splitter Theorem, there is some element $x \in E(M \ba d)$ such that $M \ba d \ba x$ or $M \ba d / x$ is $3$-connected and has an $N$-minor.  In the first case, $M$ has an $N$-detachable pair as required.
  In the latter case, observe that $x$ is not contained in either $T_1$ or $T_2$.
  Say $x \notin T_1$.
  Letting $M'$ be the matroid obtained by a $\Delta$-$Y$ exchange on $T_1$, we observe that $M \ba d / x \cong M' /d /x$ is $3$-connected and has an $N$-minor, so \cref{ut2} holds.
\end{proof}

\section{$5$-element planes}
\label{secplanes}

In this section, we show that when $M$ has a $U_{3,5}$ restriction, and there are certain elements whose removal preserves an $N$-minor, then $M$ has an $N$-detachable pair.
For $P \subseteq E(M)$, we say that $P$ is a \emph{$5$-element plane} if $M|P \cong U_{3,5}$.  We also say $P$ is a \emph{$5$-element coplane} if $M^*|P \cong U_{3,5}$.
%
The proofs of the first two lemmas are routine.

\begin{lemma}
  \label{6pointplane}
  Let $M$ be a $3$-connected matroid with $P \subseteq E(M)$ such that $M|P \cong U_{3,5}$.
  Then $M\ba p$ is $3$-connected for each $p \in \cl(P)-P$.
\end{lemma}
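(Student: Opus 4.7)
The plan is to argue by contradiction: suppose $M\ba p$ has a $2$-separation $(X,Y)$, and derive an impossibility from the fact that $p\in\cl(P)$ together with the structure $M|P\cong U_{3,5}$. Note first that since $M$ is $3$-connected and $P\cup p$ has rank $3$ with $|P|\ge 5$, the element $p$ is neither a loop, a parallel element, nor a coloop; in particular $r(M\ba p)=r(M)$.

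The first step is to show that $p$ is in the closure in $M$ of neither $X$ nor $Y$. Both $(X\cup p,Y)$ and $(X,Y\cup p)$ are partitions of $E(M)$ into parts of size at least $2$, so since $M$ is $3$-connected, $\lambda_M(X\cup p)\ge 2$ and $\lambda_M(X)\ge 2$. Combined with $\lambda_{M\ba p}(X)\le 1$ and $r(M\ba p)=r(M)$, the comparison $\lambda_M(X\cup p)-\lambda_{M\ba p}(X)=r_M(X\cup p)-r_M(X)\ge 1$ forces $p\notin\cl_M(X)$, and symmetrically $p\notin\cl_M(Y)$.

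The second step is to exploit the $U_{3,5}$ structure on $P$. Since $|P|=5$, by pigeonhole one of $P\cap X$, $P\cap Y$ has at least three elements; say $|P\cap X|\ge 3$. Because $M|P\cong U_{3,5}$ has rank $3$ and contains no triangle, any three elements of $P$ form a basis of $M|P$. Hence $r_M(P\cap X)=3=r_M(P)$, so $P\subseteq \cl_M(P\cap X)\subseteq\cl_M(X)$, and therefore $p\in\cl_M(P)\subseteq\cl_M(X)$. This contradicts the conclusion of the first step.

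There is no serious obstacle; this is the rank-$3$ analogue of \cref{rank2Remove}, with the role of ``any two of four collinear points span the line'' replaced by ``any three of five coplanar points in $U_{3,5}$ span the plane.'' The only small care needed is verifying that both $(X\cup p,Y)$ and $(X,Y\cup p)$ have each side of size at least $2$ before invoking $3$-connectivity of $M$, which is automatic from $|X|,|Y|\ge 2$.
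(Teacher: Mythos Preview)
Your proof is correct and is precisely the routine argument the paper has in mind; indeed, the paper does not spell out a proof, stating only that ``the proofs of the first two lemmas are routine.'' Your two-step approach---first showing $p\notin\cl_M(X)\cup\cl_M(Y)$ via the connectivity comparison, then using that any three elements of $P$ span $\cl(P)$---is the natural way to verify this.
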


\begin{lemma}
  \label{basicplanelemma}
  Let $M$ be a $3$-connected matroid with a set $P$ such that $M|P \cong U_{3,5}$, and $|E(M)| \ge 6$.
  If $P$ contains a triad~$T^*$, then $M \ba p$ is $3$-connected for each $p \in P-T^*$.
\end{lemma}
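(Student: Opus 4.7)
The plan is to argue by contradiction: assume $M\ba p$ is not $3$-connected, so it admits a $2$-separation $(A,B)$ with $|A|,|B|\ge 2$. Standard rank computations, using that $\lambda_M(A)\ge 2$ and $\lambda_M(A\cup p)\ge 2$ from the $3$-connectivity of $M$, will yield $p\notin\cl_M(A)\cup\cl_M(B)$; by \cref{swapSepSides}, $p\in\cl^*_M(A)\cap\cl^*_M(B)$. Hence both $(A,B\cup p)$ and $(A\cup p,B)$ will be exact $3$-separations of $M$ with $p$ in their coguts.

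The structural ingredients come from $M|P\cong U_{3,5}$: every $4$-subset of $P$ is a circuit of $M$, so in particular, writing $P-T^*=\{p,p_2\}$, the sets $P-p$, $\{p\}\cup T^*$, and $\{p_2\}\cup T^*$ are $4$-circuits of $M$, and $T^*$ remains a triad of $M\ba p$. The proof will then case-analyse on how $T^*$ and $p_2$ split between $A$ and $B$; by symmetry, I will assume $|A\cap T^*|\ge 2$.

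I will first handle the case $|A\cap T^*|=2$, with $T^*\cap B=\{t_3\}$ say. The triad $T^*$ puts $t_3\in\cl^*_{M\ba p}(A)$. If $p_2\in A$, then the $4$-circuit $\{p_2\}\cup T^*$ of $M\ba p$ forces $t_3\in\cl_{M\ba p}(A)$, contradicting \cref{gutsstayguts} applied to the $2$-separation $(A,B)$ of the connected matroid $M\ba p$. If $p_2\in B$, the $4$-circuit $\{p\}\cup T^*$ in $M$ gives $t_3\in\cl_M(A\cup p)$; together with $t_3\in\cl^*_M(A\cup p)$, this violates \cref{gutsstayguts} for the $3$-separation $(A\cup p,B)$ whenever $|B|\ge 3$. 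The boundary $|B|=2$ forces $B=\{t_3,p_2\}$ to be a series pair of $M\ba p$ (the parallel option is excluded since $M|P\cong U_{3,5}$ has no triangle), giving a second triad $\{p,t_3,p_2\}$ of $M$ contained in $P$; combining this with $T^*$ via circuit elimination in $M^*$ and using the $U_{3,5}$ structure will then yield a contradiction.

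In the remaining case $T^*\subseteq A$, the $4$-circuit $\{p_2\}\cup T^*$ gives $p_2\in\cl_{M\ba p}(A)$; if $p_2\in B$, then $p_2$ is a guts element of $(A,B)$ and may be shifted into $A$ (handling $|B|=2$ as in the boundary case above), reducing to $P-p\subseteq A$. Then $r_{M\ba p}(A)\ge 3$, and since $B$ lies in the hyperplane $E(M\ba p)-T^*$, the equation $r(A)+r(B)=r(M)+1$ tightens to $r(B)\le r(M)-2$; uncrossing this tight rank bound against the $3$-separation $(A\cup p,B)$ of $M$ will produce an element of $B$ lying in $\cl_M(A\cup p)\cap\cl^*_M(A\cup p)$, contradicting \cref{gutsstayguts} once more. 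The hardest part of the proof I expect is this final case $T^*\subseteq A$: no single small circuit of $M$ directly exhibits a guts-coguts element of $B$, and the argument must combine the hyperplane structure of $E-T^*$ with the $4$-circuits supplied by $M|P\cong U_{3,5}$ to force one.
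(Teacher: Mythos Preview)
Your case analysis is sound in outline, but two points deserve attention.

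First, the case you anticipate as hardest --- $T^*\subseteq A$ --- is actually immediate. Once you reduce to $P-p\subseteq A$, you have three elements of $P$ in $A$; since $M|P\cong U_{3,5}$, any three elements of $P$ span $P$, so $p\in\cl_M(P-p)\subseteq\cl_M(A)$, contradicting your earlier conclusion that $p\notin\cl_M(A)$. No uncrossing or hyperplane argument is needed. The same shortcut dispatches your first subcase, where $\{t_1,t_2,p_2\}\subseteq A$.

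Second, and more seriously, the boundary subcase $|B|=2$ with $B=\{t_3,p_2\}$ cannot be closed by cocircuit elimination and the $U_{3,5}$ structure alone. You correctly derive a second triad $\{t_3,p,p_2\}$, giving $r^*_M(P)\le 3$ and hence $\lambda_M(P)\le 1$; when $|E(M)|\ge 7$ this yields the contradiction. But when $|E(M)|=6$ there is a genuine counterexample: take $M$ of rank~$3$ on $P\cup\{e\}$ with $M|P\cong U_{3,5}$ and $e$ placed freely on the two lines $\cl(\{t_1,t_2\})$ and $\cl(\{p,p_2\})$ (and on no other line through two points of $P$). Then $M$ is $3$-connected, $\{t_1,t_2,t_3\}$ is a triad contained in $P$, yet $M\backslash p$ has the series pair $\{t_3,p_2\}$. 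So the lemma as stated actually fails at $|E(M)|=6$, and your proposed argument cannot succeed there. The paper records the proof as ``routine'' and does not supply one; in every application within the paper the ambient matroid has at least seven elements, so the hypothesis should really be $|E(M)|\ge 7$.

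A minor point: \cref{gutsstayguts} is stated for $3$-separations in $3$-connected matroids. When you invoke it for the $2$-separation $(A,B)$ of $M\backslash p$, argue directly instead that $t_3\in\cl_{M\backslash p}(A)\cap\cl^*_{M\backslash p}(A)$ forces $\lambda_{M\backslash p}(A\cup t_3)\le 0$, contradicting the connectedness of $M\backslash p$.
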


\begin{lemma}
  \label{planelemma}
  Let $M$ be a $3$-connected matroid with $P \subseteq E(M)$ such that $M|P \cong U_{3,5}$.
  Suppose that $\cl(P)$ contains no triangles and $P$ contains no triads.
  If $M \ba p$ is not $3$-connected for some $p \in P$, then 
  there is a labelling $\{p_1,p_2,p_3,p_4\}$ of $P-p$ such that $M \ba p_i \ba p_j$ is $3$-connected for each $i \in \{1,2\}$ and $j \in \{3,4\}$.
\end{lemma}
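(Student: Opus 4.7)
The plan is to start from a $2$-separation $(A,B)$ of $M\ba p$, reduce to the balanced case $|A\cap(P-p)| = |B\cap(P-p)| = 2$, and then label these intersections as $\{p_1,p_2\}$ and $\{p_3,p_4\}$ respectively; the four cross pairs with one element from each intersection will be the required labelling.

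First I would analyse the distribution of the four-element set $P-p$ across $(A,B)$. Since $\lambda_{M\ba p}(P-p) = \lambda_M(P) \ge 2$, the set $P-p$ is $3$-separating in $M\ba p$ exactly when $P$ is $3$-separating in $M$. I would rule out the extreme case $|A\cap(P-p)|=0$ as follows: $M\ba p$ has no parallel pairs (since $M$ is simple), so if $|A|=2$ then $A$ is a series pair of $M\ba p$ and $A\cup p$ is a triad of $M$; orthogonality against a $4$-element circuit of $M|P\cong U_{3,5}$ containing $p$ then forces a triangle in $\cl(P)$ or a triad inside $P$, contradicting our hypotheses. If $|A|\ge 3$, uncrossing $(A,B)$ with $(P-p,\,E(M\ba p)-(P-p))$ via \cref{onetrick} and taking full closures via \cref{fcllemma} should bring elements of $P-p$ into $A$. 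The unbalanced case $|A\cap(P-p)|=1$ is handled similarly, exploiting the $U_{3,4}$ symmetry of $P-p$ together with the no-triangle and no-triad hypotheses.

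Once the labelling is fixed, I would verify that $M\ba p_i\ba p_j$ is $3$-connected for each cross pair $(p_i,p_j)$ with $i\in\{1,2\}$ and $j\in\{3,4\}$. Suppose to the contrary that $(X,Y)$ is a $2$-separation of $M\ba p_i\ba p_j$, and write $\{p,p_k,p_l\} = P - \{p_i,p_j\}$. Since $\{p,p_k,p_l\}$ is a basis of $\cl_M(P)$, we have $\{p_i,p_j\}\subseteq \cl_M(\{p,p_k,p_l\})$. If all of $\{p,p_k,p_l\}$ lies on one side, say in $X$, then $\{p_i,p_j\}\subseteq \cl_M(X)$, so $\lambda_M(X\cup\{p_i,p_j\}) = \lambda_{M\ba p_i\ba p_j}(X) \le 1$, giving a $2$-separation of $M$ and contradicting $3$-connectedness. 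Hence $\{p,p_k,p_l\}$ splits $2$-$1$ across $(X,Y)$. Adding $\{p_i,p_j\}$ back to the side containing two elements of $\{p,p_k,p_l\}$ increases rank by at most $1$, so $(X\cup\{p_i,p_j\},Y)$ is $3$-separating in $M$. I would then combine this with $(A,B)$ via submodularity applied to $S = X\cup\{p_i,p_j\}$ and $T = A\cup p$, both of which are $3$-separating in $M$; the cross-pair labelling forces $p_i$ and $p_j$ onto opposite sides of $(A,B)$, which constrains how $X$ and $Y$ interact with $A$ and $B$ and should allow extraction of a $2$-separation of $M$.

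The main obstacle is this final submodularity step. Adding the plane points $\{p_i,p_j\}$ back to one side of $(X,Y)$ only yields a $3$-separation of $M$, which is allowed by $3$-connectedness; the missing unit of connectivity must be squeezed out by combining with $(A,B)$, using the fact that $p_i$ and $p_j$ lie on opposite sides of $(A,B)$ to prevent the new $3$-separation from respecting the old $2$-separation. Small-side configurations of $(X,Y)$ and $(A,B)$, as well as elements of $\cl(P)-P$ that might shift closures in subtle ways, will require detailed case analysis, but the hypotheses that $\cl(P)$ has no triangles and $P$ has no triads should preclude all the truly degenerate configurations.
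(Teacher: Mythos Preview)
Your balanced-split setup is on the right track but overcomplicated. In the paper the split $|A\cap(P-p)|=|B\cap(P-p)|=2$ falls out in one line: any three elements of $P-p$ span $P$ (since $M|P\cong U_{3,5}$), so if $|A\cap(P-p)|\ge 3$ then $p\in\cl(A)$ and $(A\cup p,B)$ is a $2$-separation of $M$. No uncrossing, no full closures, no series-pair case analysis is needed. Your treatment of the $|A\cap(P-p)|=0$ case via a triad containing $p$ is also off-target: the triad $A\cup p$ would have only one element in $P$, so orthogonality with a $4$-circuit of $M|P$ gives nothing.

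The real gap is your second half. You yourself flag the final submodularity step as ``the main obstacle'' and then do not close it; combining the $3$-separation $(X\cup\{p_i,p_j\},Y)$ of $M$ with the $3$-separation $(A\cup p,B)$ via uncrossing does not by itself produce a $2$-separation, and the case analysis you sketch (small sides, elements of $\cl(P)-P$, etc.) is neither carried out nor clearly finite. The paper avoids this entirely by a different mechanism: from the balanced split one gets a path of $3$-separations $(A',\{p_1,p_2\},\{p\},\{p_3,p_4\},B')$ in which each $p_i$ is a guts element. Bixby's Lemma then gives that $M\ba p_i$ is $3$-connected and, applied again inside $M\ba p_i$, that $\co(M\ba p_i\ba p_j)$ is $3$-connected for each cross pair. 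So the only thing left to exclude is a $4$-element cocircuit $C^*$ containing $\{p_i,p_j\}$; orthogonality forces $C^*$ to meet $P-\{p_i,p_j\}$ (via a $4$-circuit of $M|P$), to meet $A'$ (since $p_i\in\cl(A')$), and to meet $B'$ (since $p_j\in\cl(B')$), whence $|C^*|\ge 5$, a contradiction. Replace your submodularity attack with this Bixby-plus-orthogonality argument and the proof goes through cleanly.
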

  \begin{proof}
    Let $P = \{p,p_1,p_2,p_3,p_4\}$, and suppose $M \ba p$ is not $3$-connected.
    If $p$ is in a triad, then this triad is contained in $P$, by orthogonality; a contradiction.  So $M$ has a cyclic $3$-separation $(A,\{p\},B)$, where $(A,B)$ is a $2$-separation of $M \ba p$.
    Without loss of generality, we may assume that $\{p_1,p_2\} \subseteq A$.
    If $p_3 \in A$ or $p_4 \in A$, then $p \in \cl(A)$, so $(A \cup p, B)$ is $2$-separating in $M$; a contradiction.  So $\{p_3,p_4\} \subseteq B$.
    Let $A' = A - \{p_1,p_2\}$ and $B' = B - \{p_3,p_4\}$.
    Since $A$ and $B$ contain circuits and $\cl(P)$ contains no triangles, $|A'|,|B'|\ge 2$.
    Now,
    $(A', \{p_1,p_2\}, \{p\}, \{p_3,p_4\}, B')$ is a path of $3$-separations of $M$ where $p_1$ and $p_2$, and $p_3$ and $p_4$, are guts elements.
    Again using that $\cl(P)$ contains no triangles, it follows that $r(A'),r(B') \ge 3$.
    Furthermore, each $p_i$ is not in a triad, by orthogonality.
    Thus, by Bixby's Lemma, $M \ba p_i$ is $3$-connected for $i \in \{1,2,3,4\}$; and, moreover,
    $M \ba p_i \ba p_j$ is $3$-connected up to series pairs for $i \in \{1,2\}$ and $j \in \{3,4\}$.
    Suppose that $\{p_i,p_j\}$ is in a $4$-element cocircuit $C^*$ of $M$.  Then $E(M)-C^*$ is closed, so $C^*$ meets $A'$ and $B'$, and contains an element of $P - \{p_i,p_j\}$.  But this implies $|C^*| \ge 5$; a contradiction.
    This proves \cref{planelemma}.
  \end{proof}

  \begin{lemma}
    \label{6pointplane2}
    Let $M$ be a $3$-connected matroid with $P \subseteq E(M)$ such that $M|P \cong U_{3,6}$, and $X \subseteq P$ such that $|X|=4$. 
    Suppose that $\cl(P)$ contains no triangles.
    Then there are distinct elements $x_1,x_2 \in X$ such that $M\ba x_1\ba x_2$ is $3$-connected.
  \end{lemma}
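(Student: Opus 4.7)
The plan is: for any pair $\{x_1, x_2\} \subseteq X$ where $M \ba x_1 \ba x_2$ is not 3-connected, I will use the argument behind \cref{planelemma}, applied to $M \ba x_1$ and its $U_{3,5}$-restriction $P - x_1$, to produce some $x_i \in X - x_1$ with $M \ba x_1 \ba x_i$ 3-connected.

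Write $P - X = \{y_1, y_2\}$. By \cref{6pointplane} (using that $x \in \cl(P-x) \setminus (P-x)$ for the $U_{3,5}$-restriction $P - x$), $M \ba x$ is 3-connected for every $x \in P$. Fix distinct $x_1, x_2 \in X$. If $M \ba x_1 \ba x_2$ is 3-connected we are done; otherwise the 2-separation of $M \ba x_1 \ba x_2$ lifts to a cyclic 3-separation $(A, \{x_2\}, B)$ of the 3-connected matroid $M \ba x_1$, and as in the opening of the proof of \cref{planelemma}, the plane elements $\{x_3, x_4, y_1, y_2\}$ must split exactly 2-2 between $A$ and $B$ (else three of them would span $\cl(P-x_1)$ in one side, forcing $x_2$ into its closure and contradicting 3-connectivity of $M \ba x_1$).

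Next I check the hypotheses of \cref{planelemma} for $M \ba x_1$ and $P - x_1$. The no-triangle condition is inherited from $\cl(P)$. Triads of $M$ lying inside $P$ are ruled out by orthogonality: for any $T^* \subseteq P$ and $a \in T^*$, the set $(P \setminus T^*) \cup \{a\}$ is a 4-circuit of the $U_{3,6}$-restriction meeting $T^*$ in the single element $a$. So any triad of $M \ba x_1$ inside $P - x_1$ must come from a 4-cocircuit $C^* \subseteq P$ of $M$ containing $x_1$. If no such $C^*$ exists, \cref{planelemma} applies, and as in that lemma's proof (via Bixby's Lemma on the guts elements of the resulting path of 3-separations in $M \ba x_1$, each of which fails to lie in any triad of $M \ba x_1$ by a similar orthogonality with the $U_{3,6}$ 4-circuits), each of $x_3, x_4, y_1, y_2$ is 3-connectedly deletable from $M \ba x_1$; in particular $M \ba x_1 \ba x_3$ is 3-connected, delivering the pair $\{x_1, x_3\} \subseteq X$.

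If on the other hand a 4-cocircuit $C^* \subseteq P$ of $M$ does contain $x_1$, then $C^* - x_1$ is a triad of $M \ba x_1$ sitting inside the $U_{3,5}$-restriction $P - x_1$, and \cref{basicplanelemma} gives that $M \ba x_1 \ba q$ is 3-connected for each $q \in P \setminus C^*$. Since $|P \setminus C^*| = 2$, we are done unless $P \setminus C^* = \{y_1, y_2\}$, i.e., $C^* = X$. The main obstacle is exactly this degenerate sub-case, in which $X$ itself is a 4-cocircuit (hence a quad) of $M$: then $X \setminus \{x_i, x_j\}$ is a series pair in $M \ba x_i \ba x_j$ for every pair, so literally no pair from $X$ can work. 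I expect this edge case to be ruled out in the broader context where \cref{6pointplane2} is invoked (for instance via additional structural properties of $P$ relative to the ambient matroid), or to be closed off by a finer orthogonality argument beyond the level of detail of this sketch.
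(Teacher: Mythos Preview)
Your approach is the same as the paper's: delete $x_1$, note that $(M\ba x_1)|(P-x_1)\cong U_{3,5}$, and appeal to \cref{planelemma} (or \cref{basicplanelemma}) inside $M\ba x_1$. The paper is terser: it records that $P$ contains no triads of $M$ by orthogonality, applies \cref{planelemma} to $M\ba x_1$ to get $M\ba x_1\ba p\ba p'$ $3$-connected with $p\in X$, and concludes that $M\ba x_1\ba p$ is $3$-connected. You go a step further and notice that what \cref{planelemma} actually requires is that $P-x_1$ contain no triads \emph{of $M\ba x_1$}; such a triad could come from a $4$-cocircuit $C^*$ of $M$ with $x_1\in C^*\subseteq P$. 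You dispose of the case $C^*\neq X$ via \cref{basicplanelemma} and flag $C^*=X$ as unresolved.

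Your caution is justified: the case $C^*=X$ genuinely breaks the lemma. If $X$ is a cocircuit of $M$, then $X$ is a quad (it is already a circuit of the $U_{3,6}$ restriction), so for every pair $x_i,x_j\in X$ the set $X-\{x_i,x_j\}$ is a series pair of $M\ba x_i\ba x_j$ and no choice works. Concretely, $M=U_{3,6}$ with $X$ any $4$-subset satisfies all the hypotheses, yet $M\ba x_i\ba x_j\cong U_{3,4}$ is never $3$-connected; larger $3$-connected examples arise by placing a $U_{3,6}$ on a plane so that $X$ remains a cocircuit. So the lemma is false as stated, and the paper's proof has exactly the gap you identified (it checks only for triads of $M$, not of $M\ba x_1$). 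In the paper's sole use of this lemma (in the dual, within the proof of \cref{coseg}) the chosen $4$-set turns out to be independent and hence not a quad, so the argument there survives under the extra hypothesis that $X$ is not a cocircuit of $M$; your instinct that the edge case is ruled out in context is correct.
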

  \begin{proof}
    Pick any distinct $x_1,x_2 \in X$.
    By \cref{6pointplane}, $M \ba x_1$ is $3$-connected, and $M|(P-x_1) \cong U_{3,5}$.
    If $M \ba x_1 \ba x_2$ is $3$-connected, then the \lcnamecref{6pointplane2} holds, so we may assume otherwise.
    Observe that $P$ contains no triads, by orthogonality.
    Now, by \cref{planelemma}, $M \ba x_1 \ba p \ba p'$ is $3$-connected for $p,p' \in P - \{x_1,x_2\}$, where we can choose $p$ and $p'$ such that $p \in X$.
    In particular, $M\ba x_1\ba p$ is $3$-connected for $\{x_1,p\} \subseteq X$, as required.
  \end{proof}

  The following lemma is useful for finding candidates for contraction in a $4$-element cocircuit, particularly in the case where the cocircuit is independent. 

  \begin{lemma}
      \label{r4cocirc}
      Let $M$ be a $3$-connected matroid and let $C^*$ be a $4$-element cocircuit of $M$. If there are distinct elements $c',c'' \in C^*$ such that neither $c'$ nor $c''$ is in a triangle, then
      $M/c$ is $3$-connected for some $c \in C^*$.
  \end{lemma}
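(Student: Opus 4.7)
The plan is to apply Bixby's Lemma to each of $c'$ and $c''$. Since neither is in a triangle of $M$, the matroid $M/c'$ has no parallel pairs (a parallel pair in $M/c'$ would correspond to a triangle through $c'$), so $\si(M/c')=M/c'$; likewise $\si(M/c'')=M/c''$. By Bixby's Lemma, either $M/c'$ is $3$-connected---in which case the conclusion holds with $c=c'$---or $\co(M\setminus c')$ is $3$-connected, and similarly for $c''$. I will argue by contradiction: assume neither $M/c'$ nor $M/c''$ is $3$-connected. Then Lemma~\ref{openVertSep} produces vertical $3$-separations $(X_1,\{c'\},Y_1)$ and $(X_2,\{c''\},Y_2)$ of $M$.

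A key preliminary observation is that $C^*$ contains no triangle: a triangle inside $C^*$ is a $3$-subset of the $4$-element set $C^*$, so it omits exactly one element and therefore contains at least one of $c',c''$, contradicting the hypothesis. Since $M$ is $3$-connected, $C^*$ also contains no parallel pair. Hence either $C^*$ is itself a $4$-circuit (a quad, with $r(C^*)=3$) or $C^*$ is independent (with $r(C^*)=4$). I handle these two possibilities separately.

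In the quad case, $C^*$ is a rank-$3$ cocircuit of $M$. Since $C^*$ is a circuit containing $c'$, the set $C^*-c'$ is dependent of rank $2$ and size $3$ in $M/c'$; it contains no parallel pair (which would give a triangle through $c'$), so $C^*-c'$ is a triangle of $M/c'$ contained in $\cl(C^*)-c'$. Lemma~\ref{r3cocircsi} then yields that $\si(M/c')=M/c'$ is $3$-connected, contradicting the assumption.

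In the independent case, orthogonality of $C^*$ with the circuits witnessing $c'\in\cl(X_1)\cap\cl(Y_1)$ forces $C^*-c'$ to meet both $X_1$ and $Y_1$; similarly $C^*-c''$ meets both $X_2$ and $Y_2$. After relabelling, I may assume $c''\in X_1$ and $c'\in X_2$. Uncrossing via Lemma~\ref{onetrick} gives that $X_1\cup X_2\cup\{c',c''\}$ is $3$-separating, and analogous manipulations produce further $3$-separating sets. The endgame is to combine these with the independence of $C^*$ and the placements of $c_3,c_4=C^*-\{c',c''\}$ to derive a contradiction from orthogonality. The main obstacle is precisely this independent case: the uncrossing argument branches into sub-cases depending on which sides of $(X_1,Y_1)$ and $(X_2,Y_2)$ contain $c_3$ and $c_4$, and each branch requires a tailored argument exploiting both the cocircuit structure of $C^*$ and the guts condition on $c',c''$.
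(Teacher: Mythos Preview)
Your proof is incomplete: you handle the quad case correctly (the application of Lemma~\ref{r3cocircsi} is clean), but in the independent case you only sketch a strategy---uncrossing the two vertical $3$-separations through $c'$ and $c''$---without carrying it out. You explicitly call this ``the main obstacle'' and leave the sub-case analysis undone. As it stands, this is not a proof.

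The paper's argument avoids both the quad/independent bifurcation and the two-separations uncrossing entirely. It works with a \emph{single} vertical $3$-separation $(X,\{c_1\},Y)$, where $c_1$ is one of the triangle-free elements. Orthogonality with $C^*$ forces the remaining three elements of $C^*$ to split $1$--$2$ across $X$ and $Y$; say $X\cap C^*=\{c_2\}$. The key move is to show directly that $M/c_2$ is $3$-connected whenever $c_2$ is triangle-free: since $c_2\in\cocl(Y\cup c_1)$, one checks (by casing on whether $X$ is a triad, a larger cosegment, or contains a circuit avoiding $c_2$) that $(X-c_2,\{c_2\},Y\cup c_1)$ is a cyclic $3$-separation, whence Bixby's Lemma and the triangle-freeness of $c_2$ give the conclusion. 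If $c_2$ happens to lie in a triangle~$T$, then orthogonality puts another element of $C^*$, say $c_3$, in $T$; since $c_2\notin\cl(Y)$ the third vertex of $T$ lies in $X$, so $c_3\in\cl(X)$ and one can shift to the vertical $3$-separation $(Y-c_3,\{c_1\},X\cup c_3)$, in which the lone $C^*$-element on the $Y$-side is now $c_4$, the \emph{other} triangle-free element. The same argument then applies to $c_4$.

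So the paper never needs two separations or uncrossing: it exploits the single separation to isolate an element of $C^*$ and contracts \emph{that} element, adjusting once if necessary to ensure the isolated element is triangle-free. Your uncrossing route may be salvageable, but you would need to actually execute the case analysis you describe, and it is not clear it terminates as cleanly.
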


\begin{proof}
Let $C^*=\{c_1,c_2,c_3,c_4\}$ and suppose that $c_1$ is one of two elements that is not contained in a triangle.
If $M/c_1$ is not $3$-connected, then $M$ has a vertical $3$-separation $(X,\{c_1\},Y)$.
We may assume that $c_2\in X$ and $c_3,c_4\in Y$.
Suppose that $c_2$ is not in a triangle.
If $X$ is a triad, then by the dual of \cref{r3cocirc}, $M/c_2$ is $3$-connected as required.
If $X$ is not a triad, then either $X$ is a cosegment with at least four elements, or $X$ contains a circuit.
In the first case, $M/c_2$ is $3$-connected by the dual of \cref{rank2Remove}.
In the second case, as $c_2 \in \cocl(Y\cup c_1)$, the circuit contained in $X$ does not contain $c_2$.
Now $(X-c_2,\{c_2\},Y\cup c_1)$ is a cyclic $3$-separation of $M$, so $M/c_2$ is once again $3$-connected, by Bixby's Lemma.
So we may assume that $c_2$ belongs to some triangle $T$.

As $C^*$ is a cocircuit, $T\cap(C^*-c_2)\neq\emptyset$ by orthogonality, so we may assume that $c_3 \in T$ and $c_4$ is the other element of $C^*$ that is not contained in a triangle.  
As $c_2\not\in\cl(Y)$, we have $|T\cap X|=2$, so $(Y- c_3,\{c_1\},X\cup c_3)$ is a vertical $3$-separation of $M$.
Note that $(Y-c_3) \cap C^* = \{c_4\}$.
Again, if $Y- c_3$ is not a triad or a cosegment, then $(Y-\{c_3,c_4\},\{c_4\},X\cup\{c_1,c_3\})$ is a cyclic $3$-separation of $M$, and $M/c_4$ is $3$-connected by Bixby's Lemma.  On the other hand, if $Y- c_3$ is a triad, then $M/c_4$ is $3$-connected by the dual of \cref{r3cocirc}; while if $Y-c_3$ is a cosegment with at least four elements, then $M/c_4$ is $3$-connected by the dual of \cref{rank2Remove}.
\end{proof}  

The next two results show the existence of $N$-detachable pairs when $M$ has a subset $P$ such that $M|P \cong U_{3,5}$.  The first handles the case where $P$ is $3$-separating, whereas the second handles the case where $P$ is not $3$-separating.

\begin{proposition}
  \label{basicplaneupgrade}
  Let $M$ be a $3$-connected matroid with $|E(M)| \ge 9$ and $r(M) \ge 5$, and let $N$ be a $3$-connected minor of $M$ where $|E(N)| \ge 4$ and every triangle or triad of $M$ is \unfortunate.
  Suppose there exists some exactly $3$-separating set $P \subseteq E(M)$ such that $M|P \cong U_{3,5}$, and there are distinct elements $d^*,p \in P$ such that
  \begin{enumerate}[label=\rm(\alph*)]
    \item either $P$ or $P-p$ is a cocircuit, and
    \item $M / d^* / p'$ has an $N$-minor for all $p' \in P-\{d^*,p\}$.
  \end{enumerate}
  Then $M$ has an $N$-detachable pair.
\end{proposition}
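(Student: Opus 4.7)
My plan is to locate an $N$-detachable pair of the form $\{x, y\}$ with $x, y \in P - d^*$, combining the $U_{3,5}$-structure of $P$, the cocircuit hypothesis, and the $N$-minor information from hypothesis~(b). Write $P = \{d^*, p, p_1, p_2, p_3\}$; by hypothesis, $M/d^*/p_i$ has an $N$-minor for each $i \in \{1, 2, 3\}$.

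The first step is to transfer this $N$-minor information to deletion pairs in $P - d^*$. Since $M|P \cong U_{3,5}$, in $M/d^*$ the set $P - d^*$ is a $U_{2,4}$-restriction, and contracting any further $p_i$ turns $P - \{d^*, p_i\}$ into a parallel class of size three. Because $N$ is $3$-connected with $|E(N)| \ge 4$, at least two members of this parallel class must be $N$-labelled for deletion in any valid $N$-labelling witnessing that $M/d^*/p_i$ has an $N$-minor (else $N$ would contain a loop or a non-trivial parallel pair). Applying \cref{freeswitch} (combined with adaptations to handle the size-three parallel class arising after two contractions) and varying $p_i \in \{p_1, p_2, p_3\}$, I would deduce that $M \ba x \ba y$ has an $N$-minor for every distinct pair $\{x, y\} \subseteq P - d^*$.

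Next, I would establish structural properties of $P$. I claim $P$ contains no triad of $M$: in case~(a), $P$ is itself a cocircuit, so no proper subset of $P$ is a cocircuit; in case~(b), $P - p$ is a cocircuit, ruling out triads within $P - p$, and a triad $\{p, p_i, p_j\} \subseteq P$ would be $N$-grounded by the standing hypothesis yet would contradict the $N$-minor of $M \ba p \ba p_i$ just obtained. Similarly, any triangle of $M$ in $\cl(P)$ meeting $\{p_1, p_2, p_3\}$ is $N$-grounded, contradicting the $N$-deletability of a suitable pair in $P - d^*$.

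With this cleanup, I would invoke the plane lemmas to produce a $3$-connected deletion pair. If $\cl(P) = P$ and $M \ba p'$ fails to be $3$-connected for some $p' \in P$, then \cref{planelemma} yields a labelling of $P - p'$ such that four of the six induced deletion pairs are $3$-connected; matching this with the $N$-deletability from the first step produces the $N$-detachable pair. If instead $M \ba p'$ is $3$-connected for every $p' \in P - d^*$, then since $M \ba p'$ has an $N$-minor and is not a wheel or whirl (as $P - p'$ is an independent rank-$3$ set), the Splitter Theorem supplies a further element $e$ with $M \ba p' \ba e$ or $M \ba p'/e$ being $3$-connected with an $N$-minor. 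In the leftover case where $\cl(P) - P$ is nonempty, $M|\cl(P)$ contains a $U_{3,6}$-restriction and \cref{6pointplane2} can be applied to an appropriate $4$-subset to yield a $3$-connected pair deletion. The main obstacle I anticipate is twofold: executing the $N$-minor transfer in step one, since \cref{freeswitch} as stated requires a parallel pair in $M/c$ for a single $c \in C$ whereas our parallel class appears only after two contractions (requiring a direct $N$-labelling argument rather than a formal freeswitch application); and handling leftover triangles of the form $\{d^*, p, q\}$ in $\cl(P)$, which are compatible with hypothesis~(b) and hence require a more direct analysis using the cocircuit structure of $P$ together with orthogonality.
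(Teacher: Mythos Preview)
Your first step (transferring the $N$-minor information so that $M\ba x\ba y$ has an $N$-minor for every pair $\{x,y\}\subseteq P-d^*$) and your cleanup (no triads in $P$, restricting possible triangles in $\cl(P)$) match the paper's opening moves. But the heart of the argument is your Case~B, and there the Splitter Theorem approach does not work: when $M\ba p'$ is $3$-connected with an $N$-minor, the Splitter Theorem may hand you only an element $e$ with $M\ba p'/e$ $3$-connected and having an $N$-minor, and such a mixed contract/delete pair is not an $N$-detachable pair. You have no mechanism to upgrade this to a deletion pair, and this is precisely the hard case the proposition must resolve.

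The paper handles this case quite differently. After reducing (via \cref{planelemma}) to the situation where $M\ba p_i$ is $3$-connected for every $i$, it exploits the cocircuit hypothesis directly: it shows $\co(M\ba p_5\ba p_i)$ is $3$-connected for each $i\in\{1,2,3\}$ (where $p_5=p$), so failure of $3$-connectivity of $M\ba p_5\ba p_i$ forces a $4$-element cocircuit $\{p_i,p_i',p_5,z_i\}$ with $z_i\notin P$ and $M/z_i$ $3$-connected. It then runs an intricate orthogonality argument on these cocircuits and the $4$-element circuits that obstruct $3$-connectivity of $M/z_i/p_j$, eventually reaching a contradiction. None of this structure is visible in your proposal. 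You also correctly flag, but do not resolve, the residual triangle case $\{d^*,p,q\}\subseteq\cl(P)$; the paper disposes of it by observing that $\cl(P)-\{p_1,p_4\}$ collapses to a parallel class in $M/p_1/p_4$, which forces any such triangle to violate $N$-groundedness.
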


\begin{proof}
  First, observe that for any $p' \in P-\{d^*,p\}$, the set
  $P-\{d^*,p'\}$ is contained in a parallel class in $M/d^*/p'$.
  Since $|E(N)| \ge 4$,
  the matroid $M \ba q_1 \ba q_2$ has an $N$-minor for any distinct $q_1,q_2 \in P-\{d^*,p'\}$.
  By an appropriate choice of $p'$, it follows that $M \ba q_1 \ba q_2$ has an $N$-minor for all distinct $q_1,q_2 \in P-d^*$.

  Let $P= \{p_1,p_2,p_3,p_4,p_5\}$, where 
  $p_4=d^*$ and $p_5=p$.
  For each $i \in \seq{4}$, $M/p_i$ has an $N$-minor, so $P$ does not contain an \unfortunate\ triangle.
  Similarly, $M \ba p_i$ has an $N$-minor for each $i \in \{1,2,3,5\}$, so $P$ does not contain an \unfortunate\ triad.
  Suppose $\cl(P)$ contains an \unfortunate\ triangle~$T$.
  Then $T \subseteq \cl(P)-\{p_1,p_2,p_3,p_4\}$.
  Since $M/p_1/p_4$ has an $N$-minor and
  $\cl(P)-\{p_1,p_4\}$ is contained in a parallel class in $M/p_1/p_4$,
  there is an $N$-labelling $(C,D)$ such that $T \subseteq C$; a contradiction.
  So $\cl(P)$ does not contain any triangles.

  By \cref{planelemma}, we may assume that $M \ba p_i$ is $3$-connected for each 
  $i \in \seq{5}$.
  Since $P$ does not contain any triads,
  either $P$ is a cocircuit, or $P$ contains a $4$-element cocircuit.
  Towards a contradiction, we now assume that $M$ does not have an $N$-detachable pair.

  \begin{sublemma}
    \label{claim2}
    For each $i \in \seq{3}$, 
    there exists a cocircuit $\{p_i,p_i',p_5,z_i\}$ of $M$, where
    $p_i' \in P-\{p_i,p_5\}$ and $z_i \in E(M)-P$, and $M/z_i$ is $3$-connected. 
  \end{sublemma}
  \begin{slproof}
    We claim that $\co(M \ba p_5 \ba p_i)$ is $3$-connected for each $i \in \seq{4}$.
    First, suppose that $P-p_5$ is a cocircuit.
    Then, for $i \in \seq{4}$,
    $(P-\{p_i,p_5\},\{p_5\}, E(M)-P)$ is a vertical $3$-separation of $M \ba p_i$.
    Thus, by Bixby's Lemma, $\co(M \ba p_5\ba p_i)$ is $3$-connected.
    Now suppose that $P$ is a cocircuit.
    We will show that $\co(M \ba p_5 \ba p_i)$ is $3$-connected for $i=4$, but the argument is the same for $i \in \seq{3}$.
    Let $(X,Y)$ be a $2$-separation of $M \ba p_5 \ba p_4$.
    We may assume that $\{p_1,p_2\} \subseteq X$.
    Now $\{p_1,p_2,p_3\}$ is a triad of $M \ba p_5 \ba p_4$, so either
    $(X \cup p_3, Y-p_3)$ is a $2$-separation, or $Y$ is a series pair.
    But
    $p_5 \in \cl(X \cup p_3)$, so, in the former case, $(X \cup \{p_3,p_5\}, Y-p_3)$ is a $2$-separation of $M \ba p_4$; a contradiction.
    Thus $Y$ is a series pair, and it follows that
    $\co(M \ba p_5 \ba p_4)$ is $3$-connected.

    Let $i \in \seq{3}$, and
    recall that $M \ba p_i \ba p_5$ has an $N$-minor. 
    Since $\{p_i,p_5\}$ is not an $N$-detachable pair, it follows that $p_5$ is in a triad~$T^*$ of $M \ba p_i$.  
    By orthogonality, $T^*$ contains an element $p_i' \in P-\{p_i,p_5\}$, so let $T^* = \{p_5,p_i',z_i\}$.
    If $P$ is a cocircuit, then $T^* \nsubseteq P$, so $z_i \in E(M)-P$,
    whereas if $P-p_5$ is a cocircuit, then $p_5 \in \cl(E(M)-P)$, so, by orthogonality, $z_i \in E(M)-P$.
    Since $T^*$ is not a triad of $M$, $\{p_i,p_i',p_5,z_i\}$ is a cocircuit. 

    Suppose $M/z_i$ is not $3$-connected.
    If $z_i$ is in a triangle, then, by orthogonality with the cocircuit $\{p_i,p_i',p_5,z_i\}$, this triangle meets $P$; a contradiction.
    So $\si(M/z_i)$ is also not $3$-connected.  Let $(A,\{z_i\},B)$ be a vertical $3$-separation of $M$.
    Without loss of generality, $|A \cap P| \ge 3$, so $(A \cup P, \{z_i\}, B-P)$ is also a vertical $3$-separation, by uncrossing.
    But then $z_i \in \cocl(A \cup P) \cap \cl(B-P)$; a contradiction.
    So $M/z_i$ is $3$-connected.
  \end{slproof}
  
  \begin{sublemma}
    \label{claim3*}
    Suppose, up to relabelling $\{p_1,p_2,p_3\}$,
    that $M$ has a cocircuit $\{p_1,p_2,p_5,z\}$,
    for some $z \in E(M)-P$.
    Then $M$ has an $N$-detachable pair.
  \end{sublemma}
  \begin{slproof}
  Let $(C,D)$ be an $N$-labelling such that $\{p_3,p_4\} \subseteq C$; such an $N$-labelling exists since $M / p_3 / p_4$ has an $N$-minor.

    Since $\{p_1,p_2,p_5\}$ is contained in a parallel class in $M/p_3/p_4$, we may assume, up to switching the $N$-labels on $p_5$ and $p_1$ or $p_2$, that $p_1$ and $p_2$ are $N$-labelled for deletion.
    Moreover, as $\{z,p_5\}$ is a series pair in $M \ba p_1 \ba p_2$, we may also assume, by a possible $N$-label switch on $p_5$ and $z$, that $z$ is $N$-labelled for contraction.
    In particular, $\{z,p_3\}$ is an $N$-contractible pair.

    Since $P$ is exactly $3$-separating and $z \in \cocl(P)$, 
    \cref{gutsstayguts} implies that $z \notin \cl(P)$.
    So $P$ or $P-p_5$ is a rank-$3$ cocircuit in $M/z$.
    By \cref{r3cocircsi}, 
    $\si(M / z /p_3)$ is $3$-connected.
    Now either $M / z /p_3$ is $3$-connected, or $\{z,p_3\}$ is contained in a $4$-element circuit.
    In the former case, $M$ has an $N$-detachable pair. 
    So we may assume that $\{z,p_3\}$ is contained in a $4$-element circuit $C_z$.
  By orthogonality, $C_z$ meets $\{p_1,p_2,p_5\}$; moreover, since $z \notin \cl(P)$, we have $|C_z \cap P| = 2$.
  So $C_z = \{z,p_3,p'',f\}$ where $p'' \in \{p_1,p_2,p_5\}$ and $f \in E(M) - (P \cup z)$.
  
  Note that $p_3$ and $z$ are $N$-labelled for contraction.  Thus, after possibly switching the $N$-labels on $p''$ and $f$, the element $f$ is $N$-labelled for deletion.
  Let $p''' \in \{p_1,p_2\}-p''$,
  and note that $p'''$ is also $N$-labelled for deletion.
  As $(P \cup z, \{f\}, E(M) - (P \cup \{z,f\}))$ is a vertical $3$-separation 
  and
  $f$ is not in an \unfortunate\ triad, 
  the matroid $M \ba f$ is $3$-connected and has an $N$-minor.
  Note that $f \notin \cocl(P)$, so $P$ does not contain any triads in $M \ba f$.
  Thus, by \cref{planelemma}, $M \ba f \ba p'''$ is $3$-connected, so $\{f,p'''\}$ is an $N$-detachable pair. 
  \end{slproof}

  Now, by \cref{claim2,claim3*}, we may assume that
  $\{p_1,p_4,p_5,z_1\}$, $\{p_2,p_4,p_5,z_2\}$, and $\{p_3,p_4,p_5,z_3\}$ are cocircuits of $M$.

  Suppose $z_i = z_j$ for some distinct $i,j \in \seq{3}$.
  Then, by the cocircuit elimination axiom, $\{p_i,p_j,p_4,p_5\}$ contains a cocircuit; in fact, since
  $P$ does not contain any \unfortunate\ triads, this set is a $4$-element cocircuit.
  Since $P$ is not a cocircuit, $P-p_5$ is also a cocircuit, by hypothesis.
  But now $P-p_5$ is $3$-separating and
  $p_5 \in \cl(\{p_1,p_2,p_3,p_4\}) \cap \cocl(\{p_1,p_2,p_3,p_4\})$; a contradiction.
  So $z_i \neq z_j$ for all distinct $i,j \in \seq{3}$.

  For $j \in \{2,3\}$, the partition
  $(P,\{z_1\},\{z_j\},E(M)-(Z \cup \{z_1,z_j\}))$ is a path of $3$-separations where $z_1$ and $z_j$ are coguts elements.
  In particular, $z_j \in \cocl(E(M)-(Z \cup \{z_1,z_j\}))$, so $z_j \notin \cl(P \cup z_1)$.
  We now fix an $N$-labelling such that $p_1$ and $p_5$ are $N$-labelled for deletion and $p_2$ is $N$-labelled for contraction (such an $N$-labelling exists since $M/p_2/p_4$ has an $N$-minor and $\{p_1,p_3,p_5\}$ is contained in a parallel class in this matroid).
  We may also assume that $z_1$ is $N$-labelled for contraction, since
  $\{z_1,p_4\}$ is a series pair in $M \ba p_2 \ba p_4$.
  Recall that $M/z_1$ is $3$-connected. 
  By \cref{r3cocircsi}, $\si(M / z_1 /p_2)$ is $3$-connected. 
  Thus, either $\{z_1,p_2\}$ is an $N$-detachable pair, or
  $\{z_1,p_2\}$ is contained in a $4$-element circuit~$C_1$.
  By orthogonality, $C_1$ meets $\{p_1,p_4,p_5\}$ and $\{p_4,p_5,z_2\}$.
  Since $z_1 \notin \cl(P)$, we have $|C_1 \cap P|=2$.
  If $p_4 \in C_1$ or $p_5 \in C_1$, then $C_1=\{z_1,p_2,p_\ell,z_3\}$ for $\ell \in \{4,5\}$, so $z_3 \in \cl(P \cup z_1)$; a contradiction.
  On the other hand, if $\{p_4,p_5\} \cap C_1 = \emptyset$, then
  $\{p_1,z_2\} \subseteq C_1$, so $C_1=\{p_1,p_2,z_1,z_2\}$ and $z_2 \in \cl(P \cup z_1)$; a contradiction.
  This completes the proof.
\end{proof}

\begin{proposition}
  \label{planeupgrade}
  Let $M$ be a $3$-connected matroid with a $3$-connected matroid~$N$ as a minor, where $|E(N)| \ge 4$ and every triangle or triad of $M$ is \unfortunate.
  Suppose there exists $P \subseteq E(M)$ such that $M|P \cong U_{3,5}$ and $P$ is not $3$-separating,
  and there are elements $d^*,p \in P$ such that
  \begin{enumerate}[label=\rm(\alph*)]
    \item $M / d^*$ is $3$-connected,\label{planeupgradec1}
    \item $M / d^* / p'$ has an $N$-minor for all $p' \in P-\{d^*,p\}$,
      and\label{planeupgradec2}
    \item
      for any $p' \in P-d^*$ and distinct elements $u,v \in \cocl(P-d^*)-P$, either $M \ba p' \ba u$ or $M \ba p' \ba v$ has an $N$-minor.\label{planeupgradec4}
  \end{enumerate}
  Then $M$ has an $N$-detachable pair.
\end{proposition}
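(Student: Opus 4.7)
The plan is to mimic the strategy used in \cref{basicplaneupgrade} while accommodating the fact that $P$ is no longer $3$-separating in $M$. First I would extract structural information. Since $r_M(P) = 3$, $|P| = 5$, and $\lambda_M(P) \ge 3$, we have $r^*_M(P) = 5$, so $P$ is coindependent and in particular contains no triads. A parallel calculation shows that $P - d^*$ is exactly $3$-separating in $M/d^*$, and that $d^* \notin \cocl_M(P - d^*)$. Set $U = \cocl_M(P - d^*) - P$; by orthogonality with the $4$-element circuits in $M|P$, each $u \in U$ lies in some cocircuit $C^*_u$ with $u \in C^*_u \subseteq (P-d^*) \cup u$ and $|C^*_u \cap (P - d^*)| \in \{2, 3, 4\}$.

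The strategy is to find an $N$-detachable deletion pair $\{p', w\}$ with $p' \in P - d^*$ and $w \in U$. The case $|U| \le 1$ renders condition~(c) vacuous, and I would dispose of it separately, either by reducing to \cref{basicplaneupgrade} applied to $P$ after swapping $d^*$ for a more suitable element, or by deriving a direct contradiction with the non-$3$-separating hypothesis. For $|U| \ge 2$, fix distinct $u, v \in U$; by~(c), for each of the four elements $p' \in P - d^*$, at least one of $M\ba p'\ba u$ and $M\ba p'\ba v$ has an $N$-minor, so pigeonhole yields distinct $p'_1, p'_2 \in P - d^*$ that both succeed with a common $w \in \{u,v\}$. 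The hypothesis that every triad of $M$ is \unfortunate\ then rules out those candidate pairs $\{p'_i, w\}$ for which $\{p'_i, w, z\}$ is a triad of $M$ for some $z$, since an $N$-minor of $M\ba p'_i\ba w$ would, by contracting $z$ into a parallel element, yield an $N$-minor of $M/z\ba p'_i$, contradicting the fact that $\{p'_i, w, z\}$ is \unfortunate.

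It remains to verify $3$-connectivity of $M\ba p'\ba w$ for some surviving candidate, and I would analyze the three cases $|C^*_w| \in \{3, 4, 5\}$. When $|C^*_w| = 5$, $P - d^*$ becomes a quad of $M\ba w$, and \cref{planelemma,6pointplane2} applied inside the $U_{3,5}$-restriction on $P$ should deliver the required $3$-connected deletion. When $|C^*_w| \in \{3, 4\}$, I would combine Bixby's Lemma, the dual of \cref{r3cocirc}, and the cyclic/vertical $3$-separation machinery developed in the proof of \cref{basicplaneupgrade}. If no candidate yields a $3$-connected double deletion, then any resulting $2$-separation of $M\ba p'\ba w$ interacts with the cocircuits $C^*_u, C^*_v$ and with the $4$-element circuits within $P$ to produce, after uncrossing, a $3$-separating set of $M$ containing $P$---contradicting the hypothesis that $P$ is not $3$-separating. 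The hard part will be this final $3$-connectivity analysis: because the cocircuits $C^*_u, C^*_v$ can share many elements with $P - d^*$ and with each other, the sub-case analysis is likely to be a more intricate version of the one carried out for \cref{basicplaneupgrade}.
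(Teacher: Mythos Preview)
Your approach diverges substantially from the paper's and has a real gap in the case $|U|\le 1$.

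The paper's proof does not look for deletion pairs $\{p',w\}$ with $w\in\cocl(P-d^*)-P$.  Instead it first shows (using orthogonality with the $4$-element circuits in $P$ and the fact that $P$ is not $3$-separating) that every $4$-element cocircuit meeting $P$ contains \emph{exactly three} elements of $P$; crucially, these three elements may include $d^*$.  It then case-splits on the pattern of such cocircuits into three configurations and, in each case, primarily hunts for \emph{contraction} pairs: pairs $\{u,v\}$ of the ``extra'' elements (using that $\si(M/u/v)$ is $3$-connected), or pairs $\{p',u\}$ with $p'\in P\setminus C^*_u$ (using that $\si(M/p'/u)$ is $3$-connected).  When such a contraction pair is obstructed by a $4$-element circuit, the paper traces that circuit through the cocircuit structure to locate an auxiliary deletable element.

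Your set $U=\cocl(P-d^*)-P$ only sees cocircuits that avoid $d^*$.  The configuration in which \emph{every} relevant $4$-element cocircuit contains $d^*$---so that $U=\emptyset$ and condition~(c) is vacuous---is precisely the paper's case~(iii), and it is one of the longer and more delicate parts of the argument (requiring six cocircuits $\{d^*,p_i,p_j,u_{i,j}\}$, the $\si(M/p'/u)$ lemma, and a Tutte's Triangle Lemma step inside $M/d^*$).  Your proposal to ``dispose of $|U|\le 1$ separately, either by reducing to \cref{basicplaneupgrade}\ldots or by deriving a direct contradiction'' does not work as stated: $P$ is not $3$-separating, so \cref{basicplaneupgrade} does not apply, and there is no contradiction to be had---this case genuinely occurs and needs its own argument.

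Even for $|U|\ge 2$, your plan to verify $3$-connectivity of $M\ba p'\ba w$ directly is not obviously tractable; the paper never establishes that such double deletions are $3$-connected in general, and the failure modes (series classes meeting both $P$ and the outside) are exactly what force it to pivot to contraction pairs.  Also, a small slip: by orthogonality with the $4$-subsets of $P$ containing $d^*$, any cocircuit contained in $(P-d^*)\cup u$ must meet $P-d^*$ in at least three elements, not two.
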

\begin{proof}
  Pick $p \in P$ such that $M /d^*/p'$ has an $N$-minor for each $p' \in P-\{d^*,p\}$.
  Since $P-\{d^*,p'\}$ is contained in a parallel class in $M/d^*/p'$ and $|E(N)| \ge 4$,
  the matroid $M \ba q_1 \ba q_2$ has an $N$-minor for any distinct $q_1,q_2 \in P-\{d^*,p'\}$.
  As $p'$ is chosen arbitrarily among $P-\{d^*,p\}$, it follows that $M \ba q_1 \ba q_2$ has an $N$-minor for all distinct $q_1,q_2 \in P-d^*$.

  As $M \ba p'$ has an $N$-minor for each $p' \in P-d^*$, $P$ does not contain an \unfortunate\ triad.
  Suppose $\cl(P)$ contains an \unfortunate\ triangle~$T$.  Then $T$ does not meet $P-\{d^*,p\}$, since $M / p'$ has an $N$-minor for each $p' \in P-\{d^*,p\}$.
  There exist distinct $p', p'' \in P-\{d^*,p\}$ such that $M/p'/p''$ has an $N$-minor, and $T$ is contained in a parallel class in this matroid.  But this contradicts the fact that $T$ is \unfortunate, so $\cl(P)$ does not contain any triangles.

  \begin{sublemma}
    \label{p0}
    If there are distinct elements $q,q',q'' \in P-d^*$ such that neither $\{q,q'\}$ nor $\{q,q''\}$ is contained in a $4$-element cocircuit of $M$, then $M$ has an $N$-detachable pair.
  \end{sublemma}
  \begin{slproof}
    Recall that $\cl(P)$ does not contain an \unfortunate\ triangle and $P$ does not contain an \unfortunate\ triad. 
    Thus, by \cref{planelemma}, either $M$ has an $N$-detachable pair, or $M \ba q$ is $3$-connected.
    By the dual of \cref{r4cocirc}, 
    if there are distinct elements $q'$ and $q''$ in $P-\{d^*,q\}$ that are not contained in a triad of $M \ba q$, then either 
    $\{q,q'\}$ or $\{q,q''\}$ is an $N$-detachable pair.
  \end{slproof}

  \begin{sublemma}
    \label{p1}
    There is a labelling $\{p_1,p_2,p_3,p_4\}$ of $P-d^*$ such that one of the following holds:
    \begin{enumerate}
      \item $\{p_1,p_2,p_3,u\}$ and $\{p_2,p_3,p_4,v\}$ are cocircuits of $M$, with $u,v \in E(M)-P$, or\label{p1a}
      \item $\{p_1,p_2,p_3,u\}$, $\{d^*,p_2,p_4,u_2\}$ and $\{d^*,p_3,p_4,u_3\}$ are cocircuits of $M$, with $u, u_2, u_3 \in E(M)-P$, or\label{p1b}
      \item each of $\{d^*,p_1,p_3\}$, $\{d^*,p_1,p_4\}$, $\{d^*,p_2,p_3\}$, $\{d^*,p_2,p_4\}$, and $\{d^*,p_3,p_4\}$ is contained in a $4$-element cocircuit of $M$.\label{p1c}
    \end{enumerate}
  \end{sublemma}
  \begin{slproof}
    By orthogonality, a $4$-element cocircuit that intersects $P$ must contain at least three elements of $P$; in fact, since $P$ is not $3$-separating, such a cocircuit contains exactly three elements of $P$.

    If there are no cocircuits containing a $3$-element subset of $\{p_1,p_2,p_3,p_4\}$, then by repeated applications of \cref{p0}, it follows that \cref{p1c} holds.
    On the other hand, if there are two cocircuits of $M$ containing distinct $3$-element subsets of $\{p_1,p_2,p_3,p_4\}$, then \cref{p1a} holds.
    So assume that $\{p_1,p_2,p_3,u\}$ is a cocircuit of $M$ for $u \in E(M)-P$, and every other $4$-element cocircuit meeting $P$ contains $d^*$.
    If neither $\{p_2,p_4\}$ nor $\{p_3,p_4\}$ is contained in a $4$-element cocircuit, then $M$ has an $N$-detachable pair by \cref{p0}; so we may assume that $\{p_3,p_4,v\}$ is a $4$-element cocircuit for some $v \in E(M)-P$.
    But by repeating this argument with $\{p_1,p_4\}$ and $\{p_2,p_4\}$, we deduce that 
    $\{p_2,p_4,v'\}$ is a cocircuit for some $v' \in E(M)-P$.
    Since \cref{p1b} holds in this case, this completes the proof.
  \end{slproof}

  Let $u$ and $v$ be elements in $E(M)-P$ contained in distinct $4$-element cocircuits that intersect $P$ in three elements.
  If $u = v$, then $P$ contains a cocircuit by the cocircuit elimination axiom, contradicting the fact that $P$ is not $3$-separating.
  So $u \neq v$.

  \begin{sublemma}
    \label{p3}
    Let $u,v \in E(M)-P$ be distinct elements in $4$-element cocircuits $C^*_u$ and $C^*_v$, respectively, where $C^*_u \subseteq P \cup u$ and $C^*_v \subseteq P \cup v$.
    Then $\si(M/u/v)$ is $3$-connected.
  \end{sublemma}
  \begin{slproof}
    Suppose $(X,Y)$ is a $2$-separation of $M/u/v$ where neither $X$ nor $Y$ is contained in a parallel class.
    We may assume that $|X \cap P| \ge 3$ and that $X$ is closed.
    Thus $P \subseteq X$.
    But $\{u,v\} \subseteq \cocl(P) \subseteq \cocl(X)$, so
    $(X \cup \{u,v\}, Y)$ is a $2$-separation of $M$; a contradiction.
  \end{slproof}

  \begin{sublemma}
    \label{p5}
    Let $C^*_u$ be a $4$-element cocircuit with $u \in C^*_u \subseteq P \cup u$, for $u \in E(M)-P$,
    and let $p' \in P-C^*_u$. Then
    $\si(M / p' / u)$ is $3$-connected.
  \end{sublemma}
  \begin{slproof}
    Suppose $\si(M/p'/u)$ is not $3$-connected, and let $(X,Y)$ be a $2$-separation in $M/p'/u$ where neither $X$ nor $Y$ is a parallel pair.
    We may assume that $|X \cap (P-p')| \ge 2$ and that $X$ is closed.
    Since $r_{M/p'}(P-p')=2$, we have $P-p' \subseteq X$.
    Since $p' \notin C^*_u$, we have $u \in \cocl_{M/p'}(P-p')$, and $(X\cup u,Y)$ is a $2$-separation of $M/p'$; a contradiction.
  \end{slproof}

  \begin{sublemma}
    \label{p6}
    If \cref{p1}\cref{p1a} holds, then $M$ has an $N$-detachable pair.
  \end{sublemma}
  \begin{slproof}
    Let $u$ and $v$ be elements in $E(M)-P$ such that $\{p_1,p_2,p_3,u\}$ and $\{p_2,p_3,p_4,v\}$ are cocircuits of $M$.
    Recall that $M \ba p_2\ba p_3$ has an $N$-minor.
    Let $(C,D)$ be an $N$-labelling such that $\{p_2,p_3\} \subseteq D$.
    Since $\{p_1,u\}$ and $\{p_4,v\}$ are series pairs in $M\ba p_2 \ba p_3$, we may assume that $\{u,v\} \subseteq C$.

    If $M/u/v$ is $3$-connected, then $\{u,v\}$ is an $N$-detachable pair. 
    By \cref{p3}, $\si(M/u/v)$ is $3$-connected.
    Since $u$ and $v$ are $N$-labelled for contraction, each is not in an \unfortunate\ triangle.
    So we may assume there is a $4$-element circuit~$C_{uv}$ of $M$ containing $\{u,v\}$.
    By orthogonality, $C_{uv}$ contains at least one element in $P$.
    Let $C_{uv} = \{u,v,p',z\}$ for some $p' \in P$
    and $z \in E(M)-\{u,v,p'\}$.

    We claim that $z \notin P$.
    Let $Z = E(M) - (P \cup \{u,v\})$.
    Since $\lambda(P)=3$ and $u,v \in \cocl(P)$, we have $r(Z) = r(M)-2$.
    Suppose $z \in P$.
    Then $r(P \cup \{u,v\}) \le 4$, so $(Z, P\cup \{u,v\})$ is a $3$-separation.
    Next we show that $(Z, \{d^*\}, (P -d^*) \cup \{u,v\})$ is a vertical $3$-separation.  Clearly $d^* \in \cl(P-d^*)$.
    If $d^*$ is in a cocircuit containing $v$ and elements of $P-d^*$, then cocircuit elimination with $\{v,p_2,p_3,p_4\}$ implies that $P$ contains a cocircuit; a contradiction.
    So $d^* \notin \cocl((P-d^*) \cup v)$; thus $d^* \in \cl(Z \cup u)$.
    But if $d^* \notin \cl(Z)$, then $u \in \cl(Z \cup d^*)$ by the Mac~Lane-Steinitz exchange property, contradicting that $u \in \cocl(\{p_1,p_2,p_3\})$.
    So $d^* \in \cl(Z)$, and
    $(Z, \{d^*\}, (P -d^*) \cup \{u,v\})$ is a vertical $3$-separation implying that $M/d^*$ is not $3$-connected, contradicting \ref{planeupgradec1}.

    Now $C_{uv} \cap P = \{p'\}$, so $p' \in \{p_2,p_3\}$, by orthogonality.
    Since $\{p',z\}$ is a parallel pair in $M/u/v$, by switching the $N$-labels on $p'$ and $z$, we have that $z$ is $N$-labelled for deletion.

   In $M/u$, $\{v,p',z\}$ is a triangle, and, since $M \ba z$ has an $N$-minor, $z$ is not in an \unfortunate\ triad.
   Thus Tutte's Triangle Lemma implies that $M/u \ba z$ or $M/u \ba v$ is $3$-connected.
   Since $\{u,z\}$ and $\{u,v\}$ are not contained in triads, either $M \ba z$ or $M \ba v$ is $3$-connected.
   Moreover, the same argument applies with the roles of $u$ and $v$ swapped, implying that either $M \ba z$ or $M \ba v$ is $3$-connected.

   Thus, if $M \ba z$ is not $3$-connected, then both $M \ba u$ and $M \ba v$ are $3$-connected.
   Then,
   since $(M \ba u)|P \cong (M \ba v)|P \cong U_{3,5}$, it follows from \cref{basicplanelemma} that $M \ba u \ba p_1$ and $M \ba v \ba p_1$ are $3$-connected.
   Thus either $\{u, p_1\}$ or $\{v, p_1\}$ is an $N$-detachable pair, by \cref{planeupgradec4}.

   Now we may assume that $M \ba z$ is $3$-connected.
   As $(M \ba z)|P \cong U_{3,5}$, 
   if $P$ does not contain a triad of $M \ba z$, then, by \cref{planelemma}, $M$ has an $N$-detachable pair.
   So suppose that $z$ is in a $4$-element cocircuit $C^*_z$ with elements in $P$.
   Let $Q=P \cup \{u,v,z\}$.
   Observe that $Q$ is $3$-separating, as $r(Q) \le 5$ due to the circuit $\{u,v,p',z\}$, and $r(E(M)-Q) = r(M)-3$, as $r(E(M)-P) = r(M)$ and $\{u,v,z\} \subseteq \cocl(P)$.
   If $d^* \notin C^*_z$, then $$\lambda(Q-d^*) = r(Q-d^*) + r^*(Q-d^*) - |Q-d^*| \le 5+4-7 = 2.$$
   It follows, by \cref{gutses}, that $d^*$ is a guts element in the path of $3$-separations $(Q-d^*, \{d^*\}, E(M)-Q)$.
   But then $M/d^*$ is not $3$-separating; a contradiction.
   So $d^* \in C^*_z$.
   Now $T^* = C^*_z-z$ is a triad in $M \ba z$ with $d^* \in T^*$.
   Let $p'' \in P-(T^* \cup p')$.
   By \cref{basicplanelemma}, $M \ba z \ba p''$ is $3$-connected. 
   Since $p'' \in P-\{d^*,p'\}$,
   $M \ba z \ba p''$ has an $N$-minor, so $\{z, p''\}$ is an $N$-detachable pair.
  \end{slproof}

    \begin{sublemma}
      \label{p7}
      If \cref{p1}\cref{p1b} holds, then $M$ has an $N$-detachable pair.
    \end{sublemma}
    \begin{slproof}
      If $M \ba p_1 \ba p_4$ is $3$-connected, then $M$ has an $N$-detachable pair, so assume otherwise.
      Suppose $\{p_1,p_4\}$ is not contained in a $4$-element cocircuit.
      Then $\co(M \ba p_1 \ba p_4)$ is also not $3$-connected.
      Now $M \ba p_1 \ba p_4$ has a 
      $2$-separation $(X,Y)$ where
      $|X \cap \{p_2,p_3,d^*\}| \ge 2$ and $X$ is fully closed.
      But it follows that $\{p_2,p_3,d^*\} \subseteq X$, and hence $(X \cup \{p_1,p_4\}, Y)$ is a $2$-separation of $M$; a contradiction.
      So $\{p_1,p_4\}$ is contained in a $4$-element cocircuit.
      If this cocircuit also contains either $p_2$ or $p_3$, then, up to relabelling $\{p_1,p_2,p_3,p_4\}$, we are in case \cref{p1}\cref{p1a}.  By \cref{p6}, we may assume otherwise.
      So $\{d^*,p_i,p_4,u_i\}$ is a cocircuit for all $i \in \seq{3}$.

      Let $i \in \seq{3}$ and $j \in \seq{3}-i$ such that $p_j \neq p$.
      Then $M / d^*/p_j$ has an $N$-minor, and, as $P-\{d^*,p_j\}$ is contained in a parallel class in this matroid, $M /p_j \ba p_i \ba p_4$ also has an $N$-minor.
      Since $\{d^*,u_i\}$ is a series pair in $M /p_j \ba p_i \ba p_4$, it follows that $\{p_j,u_i\}$ is $N$-contractible in $M$.
      Now, by \cref{p5}, either $\{p_j,u_i\}$ is an $N$-detachable pair, or $\{p_j,u_i\}$ is contained in a $4$-element circuit $C_{i,j}$.

      By orthogonality, $C_{i,j}$ meets $\{d^*,p_i,p_4\}$.
      If $C_{i,j} \subseteq P \cup u_i$, then $u_i \in \cl(P)$.
      Then $M|(P \cup u_i) \cong U_{3,6}$, and $M$ has an $N$-detachable pair by \cref{6pointplane,basicplanelemma}.
      So let $C_{i,j} = \{p_j,u_i,q_{i,j},v_{i,j}\}$ where $q_{i,j} \in \{d^*,p_i,p_4\}$ and $v_{i,j} \in E(M)-(P \cup u_i)$ (for ease of notation, $q_{i,j} = q_{j,i}$ and $v_{i,j} = v_{j,i}$).
      If $v_{i,j} = u$, then by letting $j' \in \seq{3}-\{i,j\}$, 
      orthogonality between $C_{i,j}$ and the cocircuit $\{d^*,p_{j'},p_4,u_{j'}\}$ implies that $q_{i,j} = p_i$, whereas orthogonality between $C_{i,j}$ and the cocircuit $\{d^*,p_j,p_4,u_j\}$ implies that $q_{i,j} \neq p_i$. 
      So $v_{i,j} \neq u$.
      Observe that $p_j$ is a member of the cocircuit $\{p_1,p_2,p_3,u\}$, and recall that $u \neq u_i$.
      Then, by orthogonality, $q_{i,j} = p_i$, so
      $\{p_i,p_j,u_i,v_{i,j}\}$ is a circuit.
      If $v_{i,j} \neq u_j$, then
      $\{p_j,p_4,d^*,u_j\}$ is a cocircuit that intersects this circuit in one element; 
      a contradiction.

      Without loss of generality we may now assume that $\{p_1,p_2,u_1,u_2\}$ is a circuit.
      It follows that $(P \cup \{u_1, u_2\}, E(M)-(P \cup \{u_1,u_2\}))$ is $3$-separating in $M$.
      Since $M / d^* \ba p_1 \ba p_2$ has an $N$-minor, and $\{p_3,u\}$ is a series pair in this matroid, $M/d^*/u$ has an $N$-minor up to an $N$-label switch.
      Suppose $M/d^*/u$ is not $3$-connected.
      If $\{d^*,u\}$ is contained in a $4$-element cocircuit, then, by orthogonality, this cocircuit is contained in $P \cup u$.  It follows, by cocircuit elimination with $\{p_1,p_2,p_3,u\}$, that $P$ contains a cocircuit; a contradiction.
      So $M/d^*/u$ has a $2$-separation $(U,V)$ for which we may assume $|P \cap U| \ge 2$ and $U$ is fully closed.
      It follows that $P-d^* \subseteq U$, and hence $(U \cup u,V)$ is a $2$-separation of $M/d^*$.  But $M/d^*$ is $3$-connected, so this is contradictory.
      Hence $\{d^*,u\}$ is an $N$-detachable pair.
    \end{slproof}

    \begin{sublemma}
      \label{p8}
      If \cref{p1}\cref{p1c} holds, then $M$ has an $N$-detachable pair.
    \end{sublemma}
    \begin{slproof}
      Consider $M \ba p_1 \ba p_2$.
      If this matroid is $3$-connected, then $M$ has an $N$-detachable pair, so assume otherwise.
      If $\co(M \ba p_1 \ba p_2)$ is not $3$-connected, then there is a $2$-separation $(X,Y)$ of $M \ba p_1 \ba p_2$ for which $|X \cap \{p_3,p_4,d^*\}| \ge 2$ and $X$ is fully closed.
      It follows that
      $(X \cup \{p_1,p_2\},Y)$ is a $2$-separation of $M$; a contradiction.
      So $\{p_1,p_2\}$ is contained in a $4$-element cocircuit of $M$.
      If this cocircuit also contains $p_3$ or $p_4$, then, up to relabelling, we are in case \cref{p1}\cref{p1b}.  Hence, by \cref{p7}, we may assume $\{d^*,p_1,p_2\}$ is contained in a $4$-element cocircuit of $M$.

      Now, for all distinct $i,j \in \seq{4}$, we have that $\{d^*,p_i,p_j,u_{i,j}\}$ is a cocircuit for some $u_{i,j} \in E(M)-P$,
      where $u_{i,j} \neq u_{i',j'}$ if $i \neq i'$ or $j \neq j'$.
      (For ease of notation, we let $u_{i,j} = u_{j,i}$.)
      We may assume that $p = p_4$.
      Then, for all $i \in \seq{3}$ 
      we have that $M/d^*/p_i$ has an $N$-minor.
      For all distinct $j,j' \in \seq{4}-i$,
      since $P-\{d^*,p_i\}$ is contained in a parallel class in $M/d^*/p_i$, 
      the matroid $M/p_i\ba p_j \ba p_{j'}$ has an $N$-minor,
      and it follows that $M/p_i/u_{j,j'}$ has an $N$-minor.
      By \cref{p5}, if $\{p_i,u_{j,j'}\}$ is not contained in a $4$-element circuit, 
      then $M$ has an $N$-detachable pair.
      So we may assume that $\{p_i,u_{j,j'}\}$ is contained in a $4$-element circuit, 
      for all $i \in \seq{3}$ and distinct $j,j' \in \seq{4}-i$.

      Let $\{i,j,j',\ell\} = \seq{4}$ with $i \in \seq{3}$.
      Consider the $4$-element circuit containing $\{p_i,u_{j,j'}\}$. 
      By orthogonality, this circuit meets $\{p_s,u_{i,s},d^*\}$ for each $s \in \seq{4}-i$.
      Hence,
      as the 
      $p_s$'s and $u_{i,s}$'s are distinct for $s \in \seq{4}-i$, 
      the circuit contains $d^*$.
      That is, $\{d^*,p_i,u_{j,j'},v_{i,\ell}\}$ is a circuit for some $v_{i,\ell}$.
      Since $M/d^*/p_i$ has an $N$-minor, 
      it follows that $\{p_t,u_{j,j'}\}$ is $N$-deletable for $t \in \seq{4}-i$.
%
      If $v_{i,\ell} \in P$, then $u_{j,j'} \in \cl(P)$, so $M|(P \cup u_{j,j'}) \cong U_{3,6}$, and it follows from \cref{6pointplane} that $M$ has an $N$-detachable pair. 
      So we may assume that $v_{i,\ell} \in E(M)-P$.
      Now, if $M \ba u_{j,j'}$ is $3$-connected, then, as $(M \ba u_{j,j'})|P \cong U_{3,5}$, and $\{d^*,p_j,p_{j'}\}$ is a triad in $M \ba u_{j,j'}$, it follows, by \cref{basicplanelemma}, that
      $\{p_\ell, u_{j,j'}\}$ is an $N$-detachable pair. 
      So we may assume that $M \ba u_{j,j'}$ is not $3$-connected.

      Again, we let $\{i,j,j',\ell\} = \seq{4}$ with $i \in \seq{3}$.
      Consider $M/d^*$.  Recall that this matroid is $3$-connected, and
      observe that $\{p_i,u_{j,j'},v_{i,\ell}\}$ is a triangle, where $v_{i,\ell} \in E(M)-P$.
      Suppose that $\{p_i,u_{j,j'},v_{i,\ell}\}$ is part of a $4$-element fan.  Then there is a triad of $M$ that contains two elements of $\{p_i,u_{j,j'},v_{i,\ell}\}$.
      But as $p_i$ and $u_{j,j'}$ are $N$-deletable, neither is contained in an \unfortunate\ triad, so this is contradictory.

      Now, by Tutte's Triangle Lemma, either $M/d^* \ba u_{j,j'}$ or $M/d^* \ba v_{i,\ell}$ is $3$-connected.
      If $M/d^* \ba u_{j,j'}$ is $3$-connected, then, as $d^*$ is not in a triangle since it is $N$-contractible, $M \ba u_{j,j'}$ is $3$-connected; a contradiction.
      So $M / d^* \ba v_{i,\ell}$ is $3$-connected, and hence $M \ba v_{i,\ell}$ is $3$-connected.

      Observe that, for $i \in \seq{3}$ and $s \in \seq{4}-i$, the matroid $M \ba p_s \ba v_{i,\ell}$ has an $N$-minor, since $M/d^*/p_i\ba p_s$ has an $N$-minor and $\{u_{j,j'},v_{i,\ell}\}$ is a parallel pair in this matroid.
      As $(M \ba v_{i,\ell})|P \cong U_{3,5}$,
      if $P$ does not contain a triad of $M \ba v_{i,\ell}$, then, by \cref{planelemma}, $M$ has an $N$-detachable pair.
      So suppose $v_{i,\ell}$ is in a $4$-element cocircuit $C^*$ of $M$, where $C^* \subseteq P \cup v_{i,\ell}$.
      Then, by orthogonality, $C^*$ contains $d^*$ or $p_i$.
      Thus, there exists some $s \in \seq{4}-i$ such that $p_s \notin C^*$.
      By \cref{basicplanelemma}, $M \ba v_{i,\ell} \ba p_s$ is $3$-connected,
      so $\{v_{i,\ell},p_s\}$ is an $N$-detachable pair.
      This completes the proof of \cref{p8}.
    \end{slproof}
    The \lcnamecref{planeupgrade} now follows from \cref{p1} and \cref{p6,p7,p8}.
\end{proof}

\section{\Psep s}
\label{sec-problematic}

Throughout this series of papers, we will build up a collection of particular $3$-separators.  Any such \psep~$P$ will have the property that it can appear in a $3$-connected matroid~$M$, with a $3$-connected minor $N$, and with no $N$-detachable pairs, where 
$E(M) - E(N) \subseteq P$.
Recall that the first example we have seen is a \spikelike\ (see \cref{def-spike-like}).
In this section, we define three more \psep s, 
and, for each, describe the construction of a matroid containing the \psep, and with no $N$-detachable pairs.
These \psep s are illustrated in \cref{sfpspid,psfig2}.

Note that this is not a complete list of all such $3$-separators that can give rise to a matroid without an $N$-detachable pair.
Here, we first just consider those \psep s that are either a single-element extension, or the dual of a single-element coextension, of a structure known as a flan, which we consider in the next section.

Let $M$ be a $3$-connected matroid with ground set~$E$.

\begin{definition}
  \label{def-pspider}
  Let $P \subseteq E$ be a $6$-element exactly $3$-separating set such that $P = Q \cup \{p_1,p_2\}$ and $Q$ is a quad.  If there exists a labelling $\{q_1,q_2,q_3,q_4\}$ of $Q$ such that
  \begin{enumerate}[label=\rm(\alph*)]
    \item $\{p_1,p_2,q_1,q_2\}$ and $\{p_1,p_2,q_3,q_4\}$ are the circuits of $M$ contained in $P$, and
    \item $\{p_1,p_2,q_1,q_3\}$ and $\{p_1,p_2,q_2,q_4\}$ are the cocircuits of $M$ contained in $P$, 
  \end{enumerate}
  then $P$ is an \emph{\pspider} of $M$ with {\em associated partition} $(Q,\{p_1,p_2\})$.
\end{definition}

\begin{definition}
  \label{def-twisted}
  Let $P \subseteq E$ be a $6$-element exactly 3-separating set of $M$. If there exists a labelling $\{s_1,s_2,t_1,t_2,u_1,u_2\}$ of $P$ such that
  \begin{enumerate}[label=\rm(\alph*)]
    \item $\{s_1,s_2,t_2,u_1\}$, $\{s_1,t_1,t_2,u_2\}$, and $\{s_2,t_1,u_1,u_2\}$ are the circuits of $M$ contained in $P$; and
    \item $\{s_1,s_2,t_1,t_2\}$, $\{s_1,s_2,u_1,u_2\}$, and $\{t_1,t_2,u_1,u_2\}$ are the cocircuits of $M$ contained in $P$; 
  \end{enumerate}
  then $P$ is a \emph{\twisted} of $M$.
\end{definition}


\begin{figure}[hbtp]
  \begin{subfigure}{0.45\textwidth}
    \centering
    \begin{tikzpicture}[rotate=90,scale=0.875,line width=1pt]
      \tikzset{VertexStyle/.append style = {minimum height=5,minimum width=5}}
      \clip (-2.5,2) rectangle (3.0,-6);
      \node at (-1,-1.4) {$E-P$};
      \draw (0,0) .. controls (-3,2) and (-3.5,-2) .. (0,-4);
      \draw (0,0) -- (2,-2) -- (0,-4);
      \draw (0,0) -- (2.5,0.5) -- (2,-2);
      \draw (0,0) -- (2.25,-0.75);
      \draw (2,-2) -- (1.25,0.25);

      \Vertex[x=1.25,y=0.25,LabelOut=true,L=$q_3$,Lpos=180]{c1}
      \Vertex[x=2.25,y=-0.75,LabelOut=true,L=$q_2$,Lpos=90]{c2}
      \Vertex[x=2.5,y=0.5,LabelOut=true,L=$q_1$,Lpos=180]{c3}
      \Vertex[x=1.5,y=-0.5,LabelOut=true,L=$q_4$,Lpos=135]{c4}
      \Vertex[x=1.33,y=-2.67,LabelOut=true,L=$p_1$,Lpos=45]{c5}
      \Vertex[x=0.67,y=-3.33,LabelOut=true,L=$p_2$,Lpos=45]{c6}

      \draw (0,0) -- (0,-4);

      \SetVertexNoLabel
      \tikzset{VertexStyle/.append style = {shape=rectangle,fill=white}}
    \end{tikzpicture}
    \caption{An \pspider.}
    \label{sfpspida}
  \end{subfigure}
  \begin{subfigure}{0.45\textwidth}
    \centering
    \begin{tikzpicture}[rotate=90,scale=0.7,line width=1pt]
      \tikzset{VertexStyle/.append style = {minimum height=5,minimum width=5}}
      \clip (-2.5,-6) rectangle (4.4,2);
      \node at (-1,-1.4) {$E-P$};
      \draw (0,0) .. controls (-3,2) and (-3.5,-2) .. (0,-4);
      \draw (0,0) -- (4.0,0.9);
      \draw (0,-2) -- (2.5,-2.2); 
      \draw (0,-4) -- (3.8,-4.9); 

      \Vertex[x=3.0,y=0.67,LabelOut=true,Lpos=180,L=$s_1$]{a2}
      \Vertex[x=2.0,y=0.45,LabelOut=true,Lpos=180,L=$s_2$]{a3}

      \Vertex[x=2.5,y=-2.2,LabelOut=true,Lpos=90,L=$t_2$]{b1}
      \Vertex[x=0.64,y=-2.056,LabelOut=true,Lpos=-45,L=$t_1$]{b2}

      \Vertex[x=3.8,y=-4.9,LabelOut=true,L=$u_1$]{c1}
      \Vertex[x=2.8,y=-4.67,LabelOut=true,L=$u_2$]{c2}

      \draw[dashed] (3.8,-4.9) .. controls (2.0,-2) .. (4.0,0.9);
      \draw[dashed] (2.8,-4.67) .. controls (1.0,-2) .. (3.0,0.67);
      \draw[dashed] (1.8,-4.45) .. controls (0.25,-2) .. (2.0,0.45);

      \draw (0,0) -- (0,-4);

      \SetVertexNoLabel
      \tikzset{VertexStyle/.append style = {shape=rectangle,fill=white}}
      \Vertex[x=4.0,y=0.9]{a1}
      \Vertex[x=1.5,y=-2.12]{b3}
      \Vertex[x=1.8,y=-4.45]{c3}

    \end{tikzpicture}
    \caption{A \twisted.}
  \end{subfigure}
  \caption{Two \psep s.  Each is a single-element extension of a $5$-element flan, and is in a matroid with rank $r(E-P)+2$. }
  \label{sfpspid}
\end{figure}
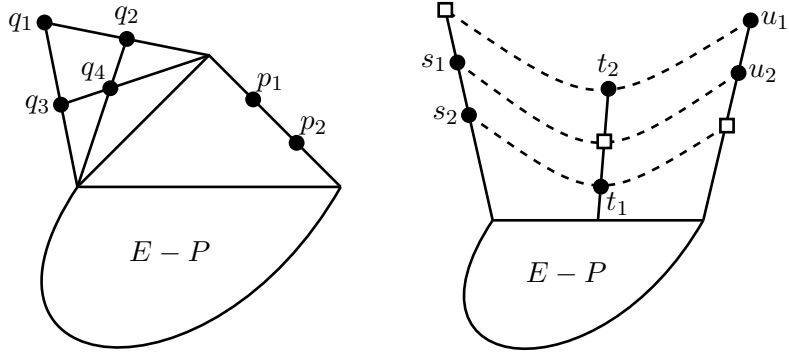

\begin{definition}
  Let $P \subseteq E$ be an exactly $3$-separating set with $P=\{p_1,p_2,q_1,q_2,s_1,s_2\}$.
  Suppose that
  \begin{enumerate}[label=\rm(\alph*)]
    \item $\{p_1,p_2,s_1,s_2\}$, $\{q_1,q_2,s_1,s_2\}$, and $\{p_1,p_2,q_1,q_2\}$ are the circuits of $M$ contained in $P$; and
    \item $\{p_1,q_1,s_1,s_2\}$, $\{p_2,q_2,s_1,s_2\}$, 
      $\{p_1,p_2,q_1,q_2,s_1\}$ and $\{p_1,p_2,q_1,q_2,s_2\}$ 
      are the cocircuits of $M$ contained in $P$. 
  \end{enumerate}
  Then $P$ is a \emph{\tvamoslike} of $M$. 
\end{definition}

\begin{figure}[htbp]
  \begin{subfigure}{0.49\textwidth}
    \centering
    \begin{tikzpicture}[rotate=90,xscale=1.21,yscale=0.605,line width=1pt]
      \tikzset{VertexStyle/.append style = {minimum height=5,minimum width=5}}
      \clip (-1.5,2) rectangle (2.7,-6);
      \node at (-0.6,-1.4) {$E-P$};
      \draw (0,0) .. controls (-1.6,2) and (-2,-2) .. (0,-4);

      \draw (1.2,1) -- (0.8,-1);
      \draw (0,-4) -- (1.2,-3);

      \draw (1.2,1) -- (2.2,-1) -- (1.2,-3);
      \draw (2.2,-1) -- (2.0,-3);

      \draw (1.2,1) -- (1.2,-3);


      \draw[white,line width=5pt] (0.8,-1) -- (2.0,-3) -- (0.8,-5);
      \draw[white,line width=5pt] (0.8,-1) -- (0.8,-5);
      \draw[white,line width=5pt] (0.8,-1) -- (0,0);
      \draw (0.8,-1) -- (2.0,-3) -- (0.8,-5);
      \draw (0.8,-1) -- (0.8,-5);
      \draw (1.2,-3) -- (0.8,-5) -- (0,-4);
      \draw (0.8,-1) -- (0,0);
      \draw (0,0) -- (0,-4);
      \draw (0,0) -- (1.2,1);


      \Vertex[x=1.2,y=1,LabelOut=true,L=$q_2$,Lpos=180]{c1}
      \Vertex[x=1.2,y=-3,LabelOut=true,L=$p_2$,Lpos=225]{c4}
      \Vertex[x=0.8,y=-1,LabelOut=true,L=$q_1$,Lpos=-45]{c5}
      \Vertex[x=0.8,y=-5,LabelOut=true,L=$p_1$,Lpos=0]{c6}
      \Vertex[x=2.0,y=-3,LabelOut=true,L=$s_1$,Lpos=45]{c5}
      \Vertex[x=2.2,y=-1,LabelOut=true,L=$s_2$,Lpos=45]{c6}

    \end{tikzpicture}
    \caption{A \tvamoslike\ of $M$.}
  \end{subfigure}
  \begin{subfigure}{0.50\textwidth}
    \centering
    \begin{tikzpicture}[rotate=90,scale=0.65,line width=1pt]
      \tikzset{VertexStyle/.append style = {minimum height=5,minimum width=5}}
      \clip (-2.78,-6) rectangle (5.0,2);
      \node at (-1,-1.4) {$E-P$};
      \draw (0,0) .. controls (-3,2) and (-3.5,-2) .. (0,-4);
      \draw (0,0) -- (4.0,0.9);
      \draw (0,-2) -- (2.5,-2.2); 
      \draw (0,-4) -- (3.8,-4.9); 

      \Vertex[x=2.0,y=0.45,LabelOut=true,Lpos=180,L=$p_2$]{a3}

      \Vertex[x=2.5,y=-2.2,LabelOut=true,Lpos=90,L=$q_1$]{b1}
      \Vertex[x=0.61,y=-2.05,LabelOut=true,Lpos=35,L=$q_2$]{b2}

      \Vertex[x=3.00,y=-4.7,LabelOut=true,L=$s_1$]{c1}
      \Vertex[x=2.15,y=-4.5,LabelOut=true,L=$s_2$]{c2}

      \Vertex[x=4.0,y=0.9,LabelOut=true,Lpos=180,L=$p_1$]{a1}

      \draw[dashed] (3.8,-4.9) .. controls (2.0,-2) .. (4.0,0.9);
      \draw[dashed] (1.3,-4.3) .. controls (0.25,-2) .. (2.0,0.45);

      \draw (0,0) -- (0,-4);

      \SetVertexNoLabel
      \tikzset{VertexStyle/.append style = {shape=rectangle,fill=white}}
      \Vertex[x=3.8,y=-4.9]{c1}
      \Vertex[x=1.3,y=-4.3]{c3}

    \end{tikzpicture}
    \caption{A \tvamoslike\ of $M^*$.}
  \end{subfigure}
  \caption{Geometric representations of a \tvamoslike\ 
  of $M$ and $M^*$.}
  \label{psfig2}
\end{figure}

For a $3$-connected matroid $M$, we say that a pair $\{x_1,x_2\} \subseteq E(M)$ is \emph{detachable} if either $M/x_1/x_2$ or $M \ba x_1 \ba x_2$ is $3$-connected.
In the former case, we will say that $\{x_1,x_2\}$ is a \emph{contraction pair}; in the latter case, $\{x_1,x_2\}$ is a \emph{deletion pair}.

Each \psep~$P$ that we have seen in this section can be used to construct a $3$-connected matroid~$M$ with a $3$-connected matroid~$N$ as a minor, such that $M$ has no $N$-detachable pairs, and $E(M)-E(N) \subseteq P$. 
For the \pspider\ and \twisted, this follows from the fact that for such a $3$-separator $P$, there is no detachable pair contained in $P$.
On the other hand, the \tvamoslike\ can contain a detachable pair, but appear in a matroid with no $N$-detachable pairs.

We first consider the \pspider.
Let $M$ be a $3$-connected matroid with a $3$-separation $(P,S)$ such that $P$ is an \pspider, the matroid $N = M \ba P$ is $3$-connected, $\cl(P)$ does not contain any triangles, and $\cocl(P)$ does not contain any triads.
Provided $N$ is sufficiently structured to ensure that $M/s$ and $M \ba s$ have no $N$-minor for any $s \in S$, the matroid $M$ has no $N$-detachable pairs, even after first performing a $\Delta$-$Y$ or $Y$-$\Delta$ exchange.
Note that in this example $|E(M)|-|E(N)|=6$.

It is also possible that $|E(M)|-|E(N)| = 5$.
In this case, up to duality, the $3$-connected $N$-minor can be obtained by extending $M$ by an element $e$ in the guts of $(P,S)$, then restricting to $S \cup e$.
Different cases arise depending on where in the guts needs to be ``filled in'' in order to obtain $N$.
If $P$ is labelled as in \cref{sfpspida},
then $e$ is in either 
\begin{itemize}
  \item $\cl(\{q_1,q_3\}) \cap \cl(\{q_2,q_4\}) \cap \cl(S)$,
  \item $\cl(\{p_1,p_2\}) \cap \cl(S)$, or
  \item $\cl(S) - (\cl(\{q_1,q_3\}) \cup \cl(\{q_2,q_4\}) \cup \cl(\{p_1,p_2\}))$.
\end{itemize}
%

Here we have just focussed on cases where $|E(M)| - |E(N)| \ge 5$, though it is also possible that $|E(M)| - |E(N)| \in \{3,4\}$.
%

Similarly, a \twisted\ can appear in a $3$-connected matroid $M$ with a $3$-connected minor $N$ such that $E(M)-E(N) \subseteq P$ and $M$ has no $N$-detachable pairs, where $|E(M)|-|E(N)| \in \{3,4,5,6\}$.

  \begin{figure}[htb]
    \centering
    \begin{tikzpicture}[rotate=90,xscale=1.331,yscale=0.6655,line width=1pt]
      \tikzset{VertexStyle/.append style = {minimum height=5,minimum width=5}}
      \clip (-1.5,2) rectangle (2.7,-6);
      \draw (0,0) -- (-1.3,-2) -- (0,-4);
      \draw (-0.65,-1) -- (0,-4);
      \draw (0,0) -- (-0.65,-3);
      \draw (0,-2) -- (-1.3,-2);

      \draw (1.2,1) -- (0.8,-1);
      \draw (0,-4) -- (1.2,-3);

      \draw (1.2,1) -- (2.2,-1) -- (1.2,-3);
      \draw (2.2,-1) -- (2.0,-3);

      \draw (1.2,1) -- (1.2,-3);

      \draw[dotted] (1.2,1) .. controls (-0.4,-0.3333) and (-0.4,-4.3333) .. (1.2,-3);

      \draw[white,line width=5pt] (0.8,-1) -- (2.0,-3) -- (0.8,-5);
      \draw[white,line width=5pt] (0.8,-1) -- (0.8,-5);
      \draw[white,line width=5pt] (0.8,-1) -- (0,0);
      \draw[white,line width=5pt] (0.8,-1) .. controls (-0.2666,0.2222) and (-0.2666,-3.7777) .. (0.8,-5);
      \draw (0.8,-1) -- (2.0,-3) -- (0.8,-5);
      \draw (0.8,-1) -- (0.8,-5);
      \draw (1.2,-3) -- (0.8,-5) -- (0,-4);
      \draw (0.8,-1) -- (0,0);
      \draw (0,0) -- (0,-4);
      \draw (0,0) -- (1.2,1);

      \draw[dashed] (0.8,-1) .. controls (-0.2666,0.2222) and (-0.2666,-3.7777) .. (0.8,-5);

      \Vertex[x=1.2,y=1,LabelOut=true,L=$q_2$,Lpos=180]{c1}
      \Vertex[x=1.2,y=-3,LabelOut=true,L=$p_2$,Lpos=225]{c4}
      \Vertex[x=0.8,y=-1,LabelOut=true,L=$q_1$,Lpos=-45]{c5}
      \Vertex[x=0.8,y=-5,LabelOut=true,L=$p_1$,Lpos=0]{c6}
      \Vertex[x=2.0,y=-3,LabelOut=true,L=$s_1$,Lpos=45]{c2}
      \Vertex[x=2.2,y=-1,LabelOut=true,L=$s_2$,Lpos=45]{c3}

      \Vertex[x=0,y=0,LabelOut=true,L=$t_1$,Lpos=180]{c7}
      \Vertex[x=0,y=-4,LabelOut=true,L=$t_2$,Lpos=0]{c8}
      \SetVertexNoLabel
      \Vertex[x=-1.3,y=-2,LabelOut=true,L=$s_2$,Lpos=45]{c9}
      \Vertex[x=-.65,y=-1,LabelOut=true,L=$s_2$,Lpos=45]{c10}
      \Vertex[x=-.65,y=-3,LabelOut=true,L=$s_2$,Lpos=45]{c11}
      \Vertex[x=-.45,y=-2,LabelOut=true,L=$s_2$,Lpos=45]{c12}

      \SetVertexNoLabel
      \tikzset{VertexStyle/.append style = {fill=white}}
      \tikzset{VertexStyle/.append style = {shape=rectangle}}
      \Vertex[x=0,y=-2]{v2}
    \end{tikzpicture}
    \caption{A matroid $M$ with a \tvamoslike\ and no $F_7^-$-detachable pairs.}
    \label{tvamoseg}
  \end{figure}

Finally, we consider the \tvamoslike.
We describe a matroid $M$ with a \tvamoslike\ and with no $F_7^-$-detachable pairs; this matroid is illustrated in \cref{tvamoseg}.
  Let $U_8$ be the paving matroid on ground set $\{p_1,p_2,q_1,q_2,s_1,s_2,t_1,t_2\}$ whose non-spanning circuits are $\{t_1,t_2,p_1,q_1\}$, $\{t_1,t_2,p_2,q_2\}$, $\{p_1,p_2,q_1,q_2\}$, $\{p_1,p_2,s_1,s_2\}$, and $\{q_1,q_2,s_1,s_2\}$. 
  Let $U_8^+$ be the 
  single-element extension of $U_8$ by the element~$z$ such that $z$ is in the span of the lines $\{t_1,t_2\}$, $\{q_1,p_1\}$, and $\{q_2,p_2\}$ and $z$ is not a loop.
  Label the triangle $T=\{t_1,t_2,z\}$.
  Let $F_7^-$ be a copy of the non-Fano matroid with $E(F_7^-) \cap E(U_8^+) = T$ such that $T$ is a triangle of $F_7^-$.
  Now let $M=P_{T}(U_8^+,F_7^-) \ba z$, and observe that $M$ is $3$-connected and has an $F_7^-$-minor. 
  In particular, $F_7^- \cong M/p_1 \ba \{s_1,s_2,p_2,q_2\}$, for example, so $|E(M)|-|E(F_7^-)| = 5$.
  Let $X = \{p_1,p_2,q_1,q_2,s_1,s_2\}$; the set $X$ is a \tvamoslike\ of $M$.
  
  The matroid $M$ has no $F_7^-$-detachable pairs.
  To see this, first observe that neither $M\ba y$ nor $M/y$ has an $F_7^-$-minor for any $y \in E(M)-X$. 
  Due to the $4$-element circuits and cocircuits contained in $X$, the only detachable pairs of $M$ contained in $X$ are the deletion pairs
  $\{p,q\} \in \{\{p_1,q_2\}, \{p_2,q_1\} \}$.
  But for any such $\{p,q\}$, the matroid $M \ba p \ba q$ has no $F_7^-$-minor.

  Note that although here we have used $N=F_7^-$ as the minor, other choices of $N$ would work provided $N$ is sufficiently structured and has a triangle $T=\{t_1,t_2,z\}$.

\section{Flans}
\label{secflans}

Let $F$ be a set of elements in a $3$-connected matroid $M$, with $t=|F| \ge 4$.
Suppose $F$ has an ordering $(f_1,f_2,\dotsc,f_t)$ such that
\begin{enumerate}[label=\rm(\alph*)]
\item if $i \in \seq{t-2}$ is odd, then $\{f_i,f_{i+1},f_{i+2}\}$ is a triad; and
\item if $i \in \{4,5,\dotsc,t\}$ is even, then $f_i\in\cl(\{f_1,f_2,\ldots,f_{i-1}\})$.
\end{enumerate}
Then $F$ is a {\em flan} of $M$, 
and $(f_1,f_2,\dotsc,f_t)$ is a \emph{flan ordering} (or just an \emph{ordering}) of $F$.
%
A flan $F$ is {\em maximal} if 
it is not properly contained in another flan.
Note that $\{f_1,f_2,\dotsc,f_i\}$ is $3$-separating for any $i \in \seq{t}$.

In this section we consider the case where there is an $N$-deletable element $d \in E(M)$ such that $M \ba d$ is $3$-connected, but $M \ba d$ has a flan~$F$ with at least five elements.
In this case, we show that either $M$ has an $N$-detachable pair, or 
$F \cup d$ is one of the $3$-separators defined in \cref{sec-problematic}.
We focus on the case where the flan has at least five elements, because
if $M \ba d$ has a maximal $4$-element flan with ordering $(f_1,f_2,f_3,f_4)$, then $M \ba d \ba f_4$ is $3$-connected.

Note that a flan generalises the notion of a fan.
Note also that the definition of a flan used here is more restrictive than that often appearing in the literature (see \cite{Hall2005,Hall2007}, for example).
A geometric illustration of an example of a flan is given in \cref{figflan}.

\begin{figure}[htbp]
    \centering
    \begin{tikzpicture}[scale=0.8]
        \draw[line width=1pt] (0,0) -- (2,3.464) -- (0,4);
        \draw[line width=1pt] (0,0) .. controls (4,2) and (6.5,5.464) .. (2,3.464);
        \SetVertexNoLabel
        \tikzset{VertexStyle/.append style = {shape=rectangle,draw,minimum height=5,minimum width=5}}
        \tikzset{VertexStyle/.append style = {fill=white}}
        \Vertex[x=0,y=0,L=$c$]{c}
        \tikzset{VertexStyle/.append style = {shape=circle,fill=black}}
        \node at (3.05,3.25) {\small$E(M)-F$};
        \SetVertexLabel
        \Vertex[x=-2,y=3.464,LabelOut=true,Lpos=90,L=$f_4$]{y1}
        \Vertex[x=1,y=1.732,LabelOut=true,Lpos=180,L=$f_8$]{y1p}
        \Vertex[x=0,y=4,LabelOut=true,Lpos=90,L=$f_6$]{g2}
        \Vertex[x=-3.464,y=2,LabelOut=true,Lpos=180,L=$f_1$]{x2}
        \Vertex[x=-2.598,y=1.5,LabelOut=true,Lpos=-90,L=$f_2$]{b0}

        \Vertex[x=-.75,y=2.8,LabelOut=true,Lpos=90,L=$f_5$]{yg}
        \Vertex[x=-2.05,y=2.05,LabelOut=true,Lpos=90,L=$f_3$]{xy}
        \Vertex[x=.75,y=2.8,LabelOut=true,Lpos=90,L=$f_7$]{ygp}
        \Edge(x2)(y1)
        \Edge(g2)(y1)
        \Edge(c)(g2)
        \Edge(c)(b0)
        \Edge(x2)(b0)
        \Edge(c)(y1)
        \Edge(c)(y1p)
    \end{tikzpicture}
    \caption{An example of a flan $F$ with ordering $(f_1,f_2,\dotsc,f_8)$.  Note that $r(M) = r(E(M)-F) + 3$.}
    \label{figflan}
\end{figure}
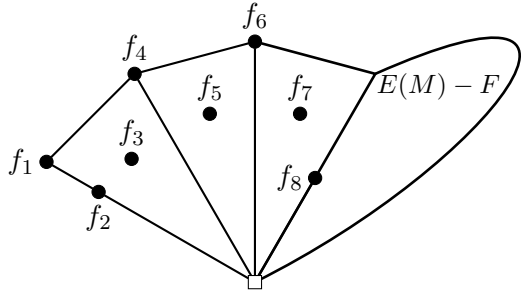

We start with a lemma that demonstrates that certain elements in a flan are candidates for contraction.

\begin{lemma}
  \label{flanend}
  Let $F$ be a maximal flan in a $3$-connected matroid~$M$, with $|F| \ge 5$ and $F \neq E(M)$.
  Let $i \in \seq{3}$, 
  let $j$ be an odd integer such that $5 \le j \le |F|$, and
  suppose $F$ has an ordering $(f_1,f_2,f_3,\dotsc,f_{|F|})$
  such that $f_i$ and $f_j$ are not contained in triangles.
  Then 
  \begin{enumerate}
    \item $M / f_i$, $M / f_j$ and $\si(M/f_i/f_j)$ are $3$-connected; 
    \item if $j \ge 7$, then $M / f_i / f_j$ is $3$-connected; and 
    \item if $|F|=5$, then $M / f_i / f_j$ is $3$-connected. 
  \end{enumerate}
\end{lemma}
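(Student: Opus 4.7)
My plan is to apply \cref{r3cocircsi} at two rank-$3$ cocircuits, $T_1=\{f_1,f_2,f_3\}$ and $T_j=\{f_{j-2},f_{j-1},f_j\}$, using the flan's guts elements ($f_4$ and $f_{j-1}$ respectively) to supply the triangles required by that lemma. After establishing that $M/f_i$ and $M/f_j$ are $3$-connected, I then rule out $4$-element circuits through $\{f_i,f_j\}$ to obtain parts (ii) and (iii).

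For claim (i), consider first $M/f_i$ with $i\in\seq{3}$. Since $f_4\in\cl(T_1)$ and $T_1$ is independent in a $3$-connected matroid with no parallel pairs, there is a circuit $C\subseteq T_1\cup f_4$ containing $f_4$; as $f_i$ lies in no triangle, $C$ is either $T_1\cup f_4$ or the triangle $\{f_a,f_b,f_4\}$ with $\{a,b\}=\seq{3}-\{i\}$. In both cases $\{f_a,f_b,f_4\}$ is a triangle of $M/f_i$ contained in $\cl(T_1)-f_i$, so \cref{r3cocircsi} gives that $\si(M/f_i)$ is $3$-connected; and $M/f_i=\si(M/f_i)$ since $f_i$ is in no triangle. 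An analogous argument treats $M/f_j$: the guts element $f_{j-1}\in\cl(\{f_1,\dotsc,f_{j-2}\})$ gives a circuit $C'$ through $f_{j-1}$, and orthogonality between $C'$ and the triad $\{f_{j-4},f_{j-3},f_{j-2}\}$ (or $T_1$ itself when $j=5$) identifies a triangle of $M/f_j$ inside $\cl(T_j)-f_j$. The claim that $\si(M/f_i/f_j)$ is $3$-connected then follows by applying this reasoning inside the $3$-connected matroid $M/f_i$, where $T_j$ remains a triad except in the degenerate case $j=5,i=3$, in which case we simply contract $f_j$ first instead.

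For parts (ii) and (iii), it suffices to exclude $4$-element circuits $C$ of $M$ with $\{f_i,f_j\}\subseteq C$, since such a circuit is precisely what would create a parallel pair in $M/f_i/f_j$. Orthogonality with $T_1$ forces $C$ to meet $T_1-f_i$, and with $T_j$ forces $C$ to meet $T_j-f_j$, so $C=\{f_i,f_j,x,y\}$ with $x\in T_1-f_i$ and $y\in T_j-f_j$ (possibly coinciding at $f_3$ when $j=5$). For (ii) with $j\ge 7$, the further triad $\{f_{j-4},f_{j-3},f_{j-2}\}$ produces an orthogonality violation that rules out every remaining candidate. For (iii) with $|F|=5$, the critical candidate is $C=\{f_1,f_2,f_3,f_5\}$: if this were a circuit then $\{f_1,\dotsc,f_5\}$ would have rank $3$, and by $\lambda(F)\ge 2$ together with submodularity the hyperplanes $E(M)-T_1$ and $E(M)-T_j$ would have intersection of rank $r(M)-1$ and hence coincide, which is impossible since they differ on $\{f_1,f_2\}$ versus $\{f_4,f_5\}$. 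A short further orthogonality check eliminates the remaining candidates $\{f_i,f_3,f_5,a\}$ and $\{f_1,f_2,f_4,f_5\}$.

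The main obstacle is the rank argument in (iii) together with the careful orthogonality casework in (ii): one must juggle several cocircuits simultaneously to show that no $4$-element circuit threads both $f_i$ and $f_j$. Note the lemma correctly omits the case $|F|=6, j=5$, where $f_6\in\cl(\{f_1,\dotsc,f_5\})$ allows a circuit $\{f_i,f_5,f_6,\cdot\}$ that produces a parallel pair after contraction; there only the simplification is $3$-connected, matching claim (i).
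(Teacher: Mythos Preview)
Your approach has two genuine gaps.

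First, the argument for $M/f_j$ via \cref{r3cocircsi} does not go through. That lemma requires a triangle of $M/f_j$ lying inside $\cl_M(T_j)-f_j$, where $T_j=\{f_{j-2},f_{j-1},f_j\}$. The circuit $C'$ you obtain through $f_{j-1}$ lies in $\{f_1,\dotsc,f_{j-1}\}$ and, by orthogonality, contains $f_{j-2}$ and one of $f_{j-3},f_{j-4}$; but there is no reason for $f_{j-3}$ or $f_{j-4}$ to belong to $\cl_M(T_j)$, so $C'$ need not sit inside $\cl_M(T_j)-f_j$. The paper avoids this entirely: since $\{f_1,\dotsc,f_j\}$ is $3$-separating and $f_j$ is a coguts element, $(\{f_1,\dotsc,f_{j-1}\},\{f_j\},E(M)-\{f_1,\dotsc,f_j\})$ is a cyclic $3$-separation (or, in the boundary case, $(E(M)-F)\cup f_j$ is a cosegment), and Bixby's Lemma gives $\si(M/f_j)$ $3$-connected directly.

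Second, your orthogonality argument for (ii) is incomplete. Orthogonality with the single extra triad $\{f_{j-4},f_{j-3},f_{j-2}\}$ does not eliminate all candidates: for instance $C=\{f_1,f_2,f_{j-1},f_j\}$ meets every flan triad in zero or two elements, as does $\{f_1,f_3,f_5,f_7\}$ when $j=7$. The paper's argument is different and cleaner: any $4$-element circuit $\{f_i,f_i',f_{j'},f_j\}$ with $f_i'\in T_1-f_i$ and $f_{j'}\in\{f_{j-2},f_{j-1}\}$ places $f_j\in\cl(\{f_1,\dotsc,f_{j-1}\})$; but $f_j\in\cocl(\{f_1,\dotsc,f_{j-1}\})$ via the triad $T_j$, and a $3$-separation cannot have an element in both the guts and coguts. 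This disposes of all cases at once when $j\ge 7$ (where the four elements are necessarily distinct).

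A smaller issue in (iii): the candidate $C=\{f_\ell,f_3,f_5,a\}$ with $a\notin F$ is \emph{not} excluded by orthogonality. It is excluded by maximality of $F$, since then $a\in\cl(F)$ and $(f_1,f_2,f_3,f_4,f_5,a)$ is a flan ordering of a six-element flan. Your rank/hyperplane argument for the case $C\subseteq F$ is correct, but you need to invoke maximality to finish.
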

\begin{proof}
  Let $F' = \{f_1,f_2,\dotsc,f_j\}$.
  Suppose that $(F'-f_j, \{f_j\}, E(M)-F')$ is a cyclic $3$-separation of $M$.
  Then $\si(M/f_j)$ is $3$-connected by Bixby's Lemma.
  Since
  $f_j$ is not contained in a triangle, 
  $M/f_j$ is $3$-connected.
  On the other hand, if $(F'-f_j, \{f_j\}, E(M)-F')$ is not a cyclic $3$-separation of $M$, 
  then $(E(M)-F')\cup f_j$ is independent.
  Then $j = |F|$ and $F' = F$, otherwise $f_{j+1}$ is in a circuit contained in $E(M)-F'$.
  If $|E(M)-F| < 3$, then, as $M$ is $3$-connected, $F$ spans $M$, contradicting the maximality of $F$.
  It follows that $(E(M)-F) \cup f_j$ is a cosegment consisting of at least four elements, so $M/f_j$ is $3$-connected by the dual of \cref{rank2Remove}.

  Now, by the dual of \cref{r3cocirc}, the matroids $\si(M /f_i)$ and $\si(M /f_i/f_j)$ are $3$-connected. 
  But $f_i$ is not in a triangle, 
  so $M / f_i$ is $3$-connected. 
  This proves (i).

  Now suppose $\{f_i,f_j\}$ is contained in a $4$-element circuit $C$.
  By orthogonality, $C$ must contain an element $f_i' \in \{f_1,f_2,f_3\}-f_i$ and an element $f_j' \in \{f_{j-2},f_{j-1}\}$.  If the elements $\{f_i,f_i',f_j,f_j'\}$ are distinct, then $f_j \in \cl(F'-f_j)$; a contradiction.
  It follows that if $j > 5$, then
$\{f_i,f_j\}$ is not contained in a $4$-element circuit, and thus $M/f_i/f_j$ is $3$-connected.
  This proves (ii).

  Now we may assume that $j=5$, 
  and $C \cap \{f_1,f_2,f_3,f_4,f_5\} = \{f_\ell,f_3,f_5\}$ for some $\ell \in \{1,2\}$.
  Let $C - \{f_\ell,f_3,f_5\} = \{x\}$.
  Then $\{f_1,f_2,f_3,f_4,f_5,x\}$ is a flan.  Thus, if $|F|=5$, 
  then
  $\{f_i,f_j\}$ is not contained in a $4$-element circuit, and thus $M/f_i/f_j$ is $3$-connected.  This proves (iii).
\end{proof}

The next \lcnamecref{flannew} deals with the case where the flan 
has at least six elements.

\begin{proposition}
  \label{flannew}
    Let $M$ be a $3$-connected matroid, and let $N$ be a $3$-connected minor of $M$, where $|E(N)| \ge 4$ and every triangle or triad of $M$ is \unfortunate.
    Suppose that $M \ba d$ is $3$-connected 
    and has a maximal flan~$F$ with $|F| \ge 6$ and ordering $(f_1,f_2,f_3,\dotsc,f_{|F|})$, 
    where $M \ba d \ba f_5$ has an $N$-minor
    with $|\{f_1,\dotsc,f_4\} \cap E(N)| \le 1$.
    Then $M$ has an $N$-detachable pair.
\end{proposition}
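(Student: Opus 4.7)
The plan is to produce an $N$-detachable pair of the form $\{f_i,f_j\}$ in $M$, with $i \in \seq{3}$ and $j$ an odd index $5 \le j \le |F|$, by simultaneously contracting $f_i$ and $f_j$. The two tools used are the flan triads (to switch $N$-labels) together with \cref{flanend} (to ensure $3$-connectivity after contraction).

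First I will extract an $N$-labelling $(C,D)$ of $M$ with $d,f_5 \in D$ from the hypothesis that $M \ba d \ba f_5$ has an $N$-minor with $|\{f_1,\dotsc,f_4\} \cap E(N)| \le 1$. The triad $\{f_3,f_4,f_5\}$ of $M \ba d$ becomes a series pair $\{f_3,f_4\}$ of $M \ba d \ba f_5$, allowing label-switching between $f_3$ and $f_4$; if $|F| \ge 7$, the triad $\{f_5,f_6,f_7\}$ similarly yields a series pair $\{f_6,f_7\}$. By propagating these switches along the flan, using each odd-index triad $\{f_i, f_{i+1}, f_{i+2}\}$ and the even guts element $f_{i+1}\in\cl(\{f_1,\dotsc,f_i\})$, I can arrange the labelling so that some $f_i$ with $i \in \seq{3}$ and some odd $f_j$ with $j \ge 5$ are both $N$-labelled for contraction. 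The hypothesis that at most one of $f_1,\dotsc,f_4$ lies in $E(N)$ gives just enough slack for these switches.

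Next I apply \cref{flanend} inside the $3$-connected matroid $M \ba d$. Assuming there is a choice of $i \in \seq{3}$ and odd $j$ (preferably $j=7$ when $|F|\ge 7$, otherwise $j=5$) such that neither $f_i$ nor $f_j$ lies in a triangle of $M \ba d$, the \lcnamecref{flanend} gives that $M \ba d / f_i$, $M \ba d / f_j$, and $\si(M \ba d / f_i / f_j)$ are $3$-connected, and that $M \ba d / f_i / f_j$ itself is $3$-connected whenever $j \ge 7$.

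The final step is to upgrade from $3$-connectivity in $M \ba d$ to $3$-connectivity in $M$, so that $\{f_i,f_j\}$ serves as the desired $N$-detachable contraction pair. If $M / f_i / f_j$ fails to be $3$-connected, then any $2$-separation $(A,B)$ of $M / f_i / f_j$ with $d \in A$ fails to separate $M \ba d / f_i / f_j$; this pins $d$ to a very restricted position in $M$, forcing the flan triads of $M \ba d$ to extend to $4$-cocircuits through $d$ in $M$. Combining this with the $N$-labelling yields either a contradiction with the maximality of the flan in $M \ba d$, or an alternative $N$-detachable pair involving $d$ itself. The main obstacles are (a) the case $|F|=6$, where part~(ii) of \cref{flanend} is unavailable and we are left only with $\si(M \ba d / f_i / f_5)$ being $3$-connected, requiring a supplementary circuit argument across $\{f_i,f_5\}$; and (b) the triangle cases, where some $f_i$ with $i \in \seq{3}$ or some odd-index $f_j$ is contained in a triangle of $M \ba d$. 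These are handled by orthogonality with the flan triads and by invoking the hypothesis that every triangle and triad of $M$ is \unfortunate.
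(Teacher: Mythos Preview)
Your overall strategy matches the paper's: obtain an $N$-labelling that puts $f_i$ (for some $i\in\seq{3}$) and $f_j$ (for an odd $j\ge 5$) in the contraction set, invoke \cref{flanend} in $M\ba d$, and then confront the obstruction that $d$ may spoil $3$-connectivity of $M/f_i/f_j$. However, your ``final step'' misidentifies what actually happens and underestimates the work involved.

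First, there is a circuit/cocircuit confusion. When $M\ba d/f_i/f_j$ is $3$-connected but $M/f_i/f_j$ is not, the conclusion is that $d$ lies in a \emph{parallel pair} of $M/f_i/f_j$, hence $\{d,f_i,f_j\}$ lies in a $4$-element \emph{circuit} of $M$. Your sentence about ``forcing the flan triads of $M\ba d$ to extend to $4$-cocircuits through $d$'' describes a different (and true, via the \unfortunate\ hypothesis) fact, used only for orthogonality; it is not the mechanism by which the failure of $3$-connectivity is exploited.

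Second, the case analysis that follows these $4$-element circuits is substantial and is not captured by ``either a contradiction with maximality or an alternative pair involving $d$''. In the paper, for $|F|\ge 7$ one first obtains circuits $\{d,f_1,f_7,t_1\}$ and $\{d,f_2,f_7,t_2\}$ with $t_1,t_2\in\{f_3,f_4,f_5\}$, uses circuit elimination to force $\{t_1,t_2\}=\{f_3,f_4\}$, then switches attention to the pair $\{f_5,f_7\}$, shows this leads to an $8$-element flan with $f_8\in\cl(F)$, analyses $\co(M\ba d\ba f_8)$ to obtain either the deletion pair $\{d,f_8\}$ or a further triad extending $F$ to a $9$-element flan, and only then closes with the pairs $\{f_i,f_9\}$. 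The $|F|=6$ case is separate and similarly intricate: after the circuits $\{d,f_i,f_5,g_i\}$ one must track whether $g_i\in F$ or $g_i\notin F$, find candidate deletion pairs $\{f_3,g_i\}$, and rule out several cocircuit configurations by orthogonality and rank counts. None of this cascade is visible in your outline, and the claim that obstacle~(b) is ``handled by orthogonality and the \unfortunate\ hypothesis'' is correct only in the trivial sense that these tools are used---the actual arguments are not short.
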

\begin{proof}
  Let $t = |F|$.
  Observe that $(\{f_1,f_2,f_3,f_4\}, E(M \ba d) - \{f_1,\dotsc,f_5\})$ is a $2$-separation of $M \ba d \ba f_5$.

  \begin{sublemma}
    \label{flannewnsl}
    $M \ba d/ f_i / f_j$ has an $N$-minor for $i \in \{1,2\}$ and 
    $j \in \{5,7\} \cap \seq{t}$.
  \end{sublemma}
  \begin{slproof}
    First consider $j=5$.
    Since $\{f_3,f_4\}$ is a series pair in $M \ba d \ba f_5$, we have that $M \ba d \ba f_5 / f_4$ is connected.
    Thus
    \cref{m2.7} implies that
    $M \ba d\ba f_5 /f_4$, and hence $M\ba d/f_4$
    has an $N$-minor.
    By further applications of \cref{m2.7}, we obtain that $M \ba d / f_4 \ba f_3$ has an $N$-minor, and that $M \ba d / f_4 \ba f_3 / f_i$ has an $N$-minor for $i \in \{1,2\}$.
    Since $M \ba d \ba f_3 /f_i$ has an $N$-minor and $\{f_4,f_5\}$ is a series pair in this matroid, $M \ba d \ba  f_3/f_i /f_5$, and in particular $M \ba d/f_i / f_5$, has an $N$-minor, as required.

    Now suppose $t \ge 7$, and consider $j=7$.
    As $M \ba d \ba f_5$ has an $N$-minor and
    $M \ba d \ba f_5 / f_i$ is connected, $M \ba d \ba f_5 / f_i$ has an $N$-minor by \cref{m2.7}.  But $\{f_6,f_7\}$ is a series pair in this matroid, so we deduce that $M \ba d/f_i /f_7$ has an $N$-minor.
  \end{slproof}

  \begin{sublemma}
    \label{flansl7}
    If $t \ge 7$, then $M$ has an $N$-detachable pair.
  \end{sublemma}
  \begin{slproof}
    Let $i \in \{1,2\}$.
    By \cref{flanend}(ii), $M \ba d/f_i/f_7$ is $3$-connected, and, by \cref{flannewnsl}, $M \ba d/f_i /f_7$ has an $N$-minor.
    So either $\{f_i,f_7\}$ is an $N$-detachable pair, or $d$ is in a parallel pair in $M/f_i/f_7$.
    Since neither $f_i$ nor $f_7$ is in an \unfortunate\ triangle, $M$ has a $4$-element circuit $\{d,f_i,f_7,t_i\}$ 
    where, by orthogonality, $t_i \in \{f_3,f_4,f_5\}$.
    By circuit elimination, $\{f_1,f_2,t_1,t_2,f_7\}$ contains a circuit.  But $f_7 \notin \cl(\{f_1,f_2,f_3,f_4,f_5\})$,
    so this circuit is $\{f_1,f_2,t_1,t_2\}$,
    and it follows that $\{t_1,t_2\} = \{f_3,f_4\}$.

    We now work towards showing that either $\{f_5,f_7\}$ is an $N$-detachable pair, or $\{f_5,f_7\}$ is contained in a $4$-element circuit of $M\ba d$.
    We have that $\{d,f_4\} \subseteq \cl_{M/f_7}(\{f_1,f_2,f_3\})$,
    but $f_5 \notin \cl_{M/f_7}(\{f_1,f_2,f_3\}) = \cl_{M/f_7}(\{f_1,f_2,f_3,f_4,d\})$.
    Consider a triangle containing $f_5$ in $M/f_7$.  It can contain at most one element in $\{f_1,f_2,f_3,f_4,d\}$.  Thus, it cannot contain $d$, as $d \notin \cl_{M/f_7}(E(M/f_7) - \{f_1,f_2,f_3\})$ since $d$ blocks the triad $\{f_1,f_2,f_3\}$.

    We claim that $M\ba d/f_5/f_7$ has an $N$-minor.
    Since $M\ba d \ba f_5$ has an $N$-minor and $M \ba d \ba f_5 / f_2 / f_3$ is connected, \cref{m2.7} implies that $M \ba d \ba f_5 / f_2 / f_3$ has an $N$-minor.
    Fix an $N$-labelling $(C,D)$ with $\{f_2,f_3\} \subseteq C$ and $\{d,f_5\} \subseteq D$.
    Since $\{f_6,f_7\}$ is a series pair in 
    $M\ba d \ba f_5$, we may assume that $f_7$ is $N$-labelled for contraction,
    up to an $N$-label switch on $f_6$ and $f_7$.
    Since $\{f_1,f_4\}$ is a parallel pair in $M / f_2 / f_3$, we may also assume, up to an $N$-label switch on $f_1$ and $f_4$, that $f_4$ is $N$-labelled for deletion.
    In particular, observe that $M \ba d \ba f_4 / f_7$ has an $N$-minor.
    Finally, $\{f_3,f_5\}$ is a series pair in $M \ba d \ba f_4$, so, after switching the $N$-labels on $f_3$ and $f_5$, the element $f_5$ is $N$-labelled for contraction, and $f_3$ is $N$-labelled for deletion.
    To summarise, $\{f_2,f_5,f_7\} \subseteq C$ and $\{d,f_3,f_4\} \subseteq D$.
    So $M\ba d/f_5/f_7$ has an $N$-minor, as claimed.

    Since $f_7$ is $N$-contractible, $f_7$ is not in a triangle of $M \ba d$.
    Thus, by \cref{flanend}, $M \ba d /f_7$ is $3$-connected, and $(f_1,f_2,\dotsc,f_6)$ is a flan ordering in this matroid.
    Hence, either $f_5$ is in a triangle of $M \ba d / f_7$, or, by another application of \cref{flanend}, $M \ba d /f_5/f_7$ is $3$-connected.
    In the latter case, as $d$ is not in a triangle with $f_5$ in $M/f_7$,
    the matroid $M/f_5/f_7$ is $3$-connected, so
    $\{f_5,f_7\}$ is an $N$-detachable pair.
    So we may assume that $\{f_5,f_7\}$ is contained in a $4$-element circuit of $M \ba d$.

    By orthogonality, this circuit meets $\{f_3,f_4\}$.
    But if this circuit is contained in $\{f_1,f_2,\dotsc,f_7\}$, then $f_7 \in \cl(\{f_1,f_2,\dotsc,f_6\})$; a contradiction.
    It follows, by orthogonality, that 
    $\{f_4,f_5,f_7,f_8\}$ is a circuit, where
    $f_8 \in E(M\ba d)-\{f_1,f_2,\dotsc,f_7\}$.
    Note that $\{f_1,f_2,\dotsc,f_8\}$ is a flan.
    Recall the $N$-labelling $(C,D)$ of $M$ with $d \in D$ and $\{f_5,f_7\} \subseteq C$.
    Since 
    $\{f_4,f_8\}$ is a parallel pair in $M/f_5/f_7$,
    the element $f_8$ is $N$-labelled for deletion
    after switching the $N$-labels on $f_4$ and $f_8$.
    In particular, $M \ba d \ba f_8$ has an $N$-minor.

    Let $Z=E(M\ba d)-\{f_1,\dotsc,f_8\}$,
    and observe that
    $(\{f_1,f_2,\dotsc,f_7\}, \{f_8\}, Z)$ is a 
    path of $3$-separations.
    Suppose $|Z| = 1$. Then $E(M \ba d)$ is a $9$-element flan, 
    $f_2$ and $f_7$ are $N$-labelled for contraction with respect to the $N$-labelling $(C,D)$,
    and it is easily verified that $M/f_2/f_7$ is $3$-connected. So $\{f_2,f_7\}$ is an $N$-detachable pair.
    We now may assume that $|Z| \ge 2$.

    We claim that $\co(M \ba d \ba f_8)$ is $3$-connected.
    If $r(Z) \ge 3$, then $(\{f_1,f_2,\dotsc,f_7\}, \{f_8\}, Z)$ is a vertical $3$-separation,
    and the claim follows from Bixby's Lemma.
    On the other hand, if $r(Z) \le 2$ and $|Z| \ge 3$, then $(M \ba d)|(Z \cup f_8) \cong U_{2,4}$, and $M \ba d \ba f_8$ is $3$-connected by \cref{rank2Remove}.
    Finally, if $|Z| = 2$, then 
    $Z \cup \{f_7,f_8\}$ is a rank-$3$ cocircuit, and $\co(M\ba d \ba f_8)$ is $3$-connected by \cref{r3cocirc}, thus proving the claim.
    So either $\{d,f_8\}$ is an $N$-detachable pair, or $f_8$ is in a triad of $M \ba d$.

    We may now assume that $f_8$ is in a triad~$T^*$ of $M \ba d$.
    Since $f_8 \notin \cocl(\{f_1,f_2,\dotsc,f_7\})$, the triad $T^*$ contains an element $q \in E(M)-\{f_1,f_2,\dotsc,f_8\}$.
    By orthogonality, $T^*$ meets $\{f_4,f_5,f_7\}$.
    But if $T^* = \{f_4,f_8,q\}$, then $T^*$ intersects the circuit $\{f_1,f_2,f_3,f_4\}$ in one element; a contradiction.
    Similarly, if $T^* = \{f_5,f_8,q\}$,
    then $\{d,f_5,f_8,q\}$ is a cocircuit of $M$ that intersects the circuit $\{d,f_1,f_7,t_1\}$ (where $t_1 \in \{f_3,f_4\}$) in one element; a contradiction.
    We deduce that $T^* = \{f_7,f_8,q\}$.
    After relabelling $q$ as $f_9$, we observe that $(f_1,f_2,\dotsc,f_9)$ is a flan ordering.

    Next we claim that $M/f_i/f_9$ has an $N$-minor for $i \in \{1,2\}$.
    Again we recall the $N$-labelling $(C,D)$ from earlier, which has $\{f_2,f_7\} \subseteq C$ and $\{d,f_8\} \subseteq D$.
    Since
    $\{f_7,f_9\}$ is a series pair in $M \ba d \ba f_8$, 
    the element $f_9$ is $N$-labelled for contraction after switching the $N$-labels on $f_7$ and $f_9$.
    So $M/f_2/f_9$ has an $N$-minor.
    Using a similar argument, but starting with an $N$-labelling $(C',D')$ that has $\{f_1,f_3\} \subseteq C'$ and $\{d,f_5\} \subseteq D'$, one can show that $M/f_1/f_9$ has an $N$-minor.

    Let $i \in \{1,2\}$.
    Since each of $f_i$ and $f_9$ is not contained in an \unfortunate\ triangle,
    $M \ba d / f_i / f_9$ is $3$-connected,
    by \cref{flanend}(ii).
    Now either $\{f_i,f_9\}$ is an $N$-detachable pair, or $d$ is in a parallel pair in $M/f_i/f_9$.
    Hence, we may assume that
    $M$ has a $4$-element circuit containing $\{d,f_i,f_9\}$. 
    By orthogonality,
    this circuit meets $\{f_3,f_4,f_5\}$ and $\{f_5,f_6,f_7\}$, so
    $\{d,f_i,f_5,f_9\}$ is a circuit.
    As $i \in \{1,2\}$ was chosen arbitrarily, we may now assume that
    $\{d,f_1,f_5,f_9\}$ and $\{d,f_2,f_5,f_9\}$ are circuits.
    By circuit elimination, $\{f_1,f_2,f_5,f_9\}$ contains a circuit.
    But this set does not contain a triangle, and $f_9 \notin \cl(\{f_1,f_2,f_5\})$, so this is contradictory.
  \end{slproof}

  \begin{sublemma}
    \label{flansl6}
    If $t=6$, then $M$ has an $N$-detachable pair.
  \end{sublemma}
  \begin{slproof}
    For each $i \in \{1,2\}$,
    the matroid $\si(M \ba d / f_i / f_5)$ is $3$-connected, by \cref{flanend}(i), and $M \ba d /f_i /f_5$ has an $N$-minor, by \cref{flannewnsl}.
    First, suppose that $\{d,f_i,f_5\}$ is not contained in a $4$-element circuit, for some $i \in \{1,2\}$.
    It follows that if $M \ba d/f_i/f_5$ is $3$-connected, then $M /f_i/f_5$ is $3$-connected, and $\{f_i,f_5\}$ is an $N$-detachable pair.
    So 
    assume that $\{f_i,f_5\}$ is contained in a $4$-element circuit in $M \ba d$. 
    By orthogonality, this circuit must also contain an element of $\{f_1,f_2,f_3\}-f_i$ and an element of $\{f_3,f_4\}$.  Thus, if it does not contain $f_3$, then $f_5 \in \cl(\{f_1,f_2,f_3,f_4\})$; a contradiction.
    It follows that this circuit
    is $\{f_i,f_3,f_5,q\}$ for some $q \in E(M\ba d)-\{f_1,f_2,f_3,f_4,f_5\}$.
    
    Since $M \ba d / f_i / f_5$ has an $N$-minor, and $\{f_3,q\}$ is a parallel pair in this matroid, $\{d,q\}$ is $N$-deletable.
    Let $F' = \{f_1,f_2,f_3,f_4,f_5\}$.
    Now $F'$ and $F' \cup q$ are $3$-separating in $M \ba d$, by \cref{extendSep}.
    As $|F' \cap E(N)| \le 1$ and $|E(N)| \ge 4$, we have $|E(M\ba d)-F'| \ge 3$.
    We claim that $\co(M \ba d \ba q)$ is $3$-connected.
    If $r(E(M\ba d)-F') \le 2$, then $(E(M\ba d)-F') \cup f_5$ is a rank-$3$ cocircuit, and it follows, by \cref{r3cocirc}, that $\co(M \ba d \ba q)$ is $3$-connected.
    On the other hand, if $r(E(M\ba d)-F') \ge 3$,
    then
    $(F', \{q\},E(M\ba d)-(F' \cup q))$ is a vertical $3$-separation of $M \ba d$, so $\si(M \ba d/q)$ is not $3$-connected, and hence $\co(M \ba d\ba q)$ is $3$-connected by Bixby's Lemma.
    So $\{d,q\}$ is an $N$-detachable pair unless $q$ is in a triad $T^*$ of $M \ba d$.
    Note also that, by the foregoing, $q \in \cl(E(M \ba d)-(F' \cup q))$.

    As $q \notin \cocl_{M \ba d}(F')$, the triad $T^*$ contains at most one element of $F'$.
    By orthogonality, $T^*$ meets $\{f_i,f_3,f_5\}$.
    Now if $f_5 \notin T^*$, then, as $\{f_1,f_2,f_3,f_4\}$ is a circuit, $T^*$ intersects $\{f_1,f_2,f_3,f_4\}$ in two elements; a contradiction.
    It follows that $T^* = \{f_5,q,q'\}$, for $q' \in E(M\ba d)-F'$. 
    Now, as $f_6 \in \cl(F')$ but $f_6 \notin \cl(F'-f_5)$, there is a circuit containing $\{f_5,f_6\}$ that is contained in $F$.
    Again by orthogonality, we deduce that $f_6 \in T^*$, so let $q=f_6$.
    Now $F \cup q'$ is a flan, contradicting that $F$ is a maximal $6$-element flan.  

    \medskip

    It remains to consider the case where $M$ has circuits $\{d,f_1,f_5,g_1\}$ and $\{d,f_2,f_5,g_2\}$ for some $g_1 \in E(M)-\{d,f_1,f_5\}$ and $g_2 \in E(M)-\{d,f_2,f_5\}$.
    First, suppose that $g_1 = g_2 = f_3$.
    Then $\{d,f_1,f_3,f_5\}$ and $\{d,f_2,f_3,f_5\}$ are circuits, so $\{f_1,f_2,f_3,f_5\}$ contains a circuit by circuit elimination.  But $\{f_1,f_2,f_3,f_5\}$ does not contain a triangle, and $f_5 \notin \cl(\{f_1,f_2,f_3\})$, since $f_5 \in \cocl_{M \ba d}(\{f_1,f_2,f_3,f_4\})$, so this is contradictory.

    Let $i \in \{1,2\}$ such that $g_i \neq f_3$.
    We will show that either $\{f_3,g_i\}$ is an $N$-detachable pair, or
    $g_i \in E(M \ba d)-F$ and
    there is a $4$-element cocircuit~$C^*_i$ with $\{f_3,g_i\} \subseteq C^*_i \subseteq F \cup g_i$.

    Using a similar argument as in the proof of \cref{flannewnsl}, it follows from \cref{m2.7} that $M \ba f_3 / f_i / f_5$ has an $N$-minor for $i \in \{1,2\}$.
    Since $\{d,g_i\}$ is a parallel pair in $M/f_i/f_5$, 
    the pair $\{f_3,g_i\}$ is $N$-labelled for deletion,
    up to swapping the $N$-labels on $d$ and $g_i$.

    Suppose that $\co(M \ba f_3 \ba g_i)$ is not $3$-connected.
    Then $M \ba f_3 \ba g_i$ has a $2$-separation $(X,Y)$ for which $(\fcl(X),Y-\fcl(X))$ and $(X-\fcl(Y),\fcl(Y))$ are also $2$-separations.
    Thus, we may assume that
    $\{f_1,f_2,d\} \subseteq X$ and $X$ is fully closed.
    Pick $j$ such that $\{i,j\} = \{1,2\}$.
    If $f_5 \in X$, then
    $g_j \in X$ due to the circuit $\{f_j,f_5,d,g_j\}$, and
    $f_4 \in X$ due to the cocircuit $\{f_4,f_5,d\}$ of $M \ba f_3 \ba g_i$.
    Similarly, if $f_4 \in X$, then $\{f_5,g_j\} \subseteq X$.
    But then 
    $\{f_3,g_i\} \subseteq \cl(X)$, so $(X \cup \{f_3,g_i\},Y)$ is a $2$-separation of $M$; a contradiction.
    So $\{f_4,f_5\} \subseteq Y$.
    Now, if $g_j \in X$, then $f_5 \in \cl(X)$, so $X$ is not fully closed; a contradiction.
    So $g_j \in Y$.
    Consider $\fcl(Y)$.  As $d \in \fcl(Y)$, we have $f_j \in \fcl(Y)$, so $f_i \in \fcl(Y)$, and $(X-\fcl(Y), \fcl(Y) \cup \{f_3,g_i\})$ is a $2$-separation of $M$; a contradiction.
    So $\co(M \ba f_3 \ba g_i)$ is $3$-connected. 

    So we may assume that $M$ has a $4$-element cocircuit $C^*_i$ containing $\{g_i,f_3\}$,
    otherwise $M$ has an $N$-detachable pair.
    By orthogonality, $C^*_i$ meets $\{f_1,f_2,f_4\}$ and $\{d,f_i,f_5\}$.
    Suppose $d \in C^*_i$.  If $f_4 \in C^*_i$, then $\{f_3,f_4,f_5,g_i\}$ is a cosegment of $M \ba d$.  If $g_i \in \{f_1,f_2\}$, then $r^*_{M \ba d}(F-f_6) = 2$, so $\lambda_{M \ba d}(F-f_6) = 4+2-5=1$; a contradiction.  But on the other hand if $g_i \notin \{f_1,f_2\}$, then
    this contradicts orthogonality with the circuit $\{f_1,f_2,f_3,f_4\}$. 
    So $\{f_1,f_2\}$ meets $C^*_i$, and thus $\{f_1,f_2,f_3,g_i\}$ is a cosegment of $M \ba d$.
    As before, $g_i \notin \{f_4,f_5\}$, otherwise $\lambda_{M \ba d}(F)=1$; and $g_i \neq f_6$, since $f_6 \notin \cocl_{M \ba d}(F-f_6)$.
    Observe that 
    there is a circuit contained in
    $\{f_1,f_3,f_4,f_5,f_6\}$ with at least four elements.
    But this contradicts orthogonality.  So $d \notin C^*_i$.

    Suppose $|C^*_i \cap F| \le 2$.
    Then $C^*_i \cap F = \{f_i,f_3\}$.
    Again, pick $j$ such that
    $\{i,j\} = \{1,2\}$, and
    observe that there is a circuit contained in $\{f_j,f_3,f_4,f_5,f_6\}$ that contains $f_3$. But this contradicts orthogonality, so $C^*_i \subseteq F \cup g_i$.
    Now suppose $C^*_i \subseteq F$.
    Since $F-f_6$ is exactly $3$-separating in $M \ba d$ and $f_6 \in \cl(F-f_6)$, it follows that $f_6 \notin \cocl_{M \ba d}(F-f_6)$.
    So $C^*_i \subseteq F-f_6$.
    But, as $f_3 \in C^*_i$, and $r^*_{M \ba d}(\{f_1,f_2,f_3\}) = r^*_{M \ba d}(\{f_3,f_4,f_5\}) = 2$, the set $C^*_i$ is a $4$-element cosegment in $M \ba d$.  Thus $r^*_{M \ba d}(F-f_6)=2$, and $\lambda_{M \ba d}(F-f_6)=1$; a contradiction. 
    We deduce that $g_i \notin F$.

    Finally, we recall that
    $\{d,f_1,f_5,g_1\}$ and $\{d,f_2,f_5,g_2\}$ are circuits, so, by circuit elimination, there are circuits contained in the sets $\{d,f_1,f_2,g_1,g_2\}$ and $\{f_1,f_2,f_5,g_1,g_2\}$.
    By the foregoing, if $f_3 \notin \{g_1,g_2\}$, then $\{g_1,g_2\} \cap F = \emptyset$, and $\{g_1,g_2\} \subseteq \cocl_{M \ba d}(F)$.
    Since $\{d,f_3,f_4,f_5\}$ is a cocircuit, we deduce, by orthogonality, that $\{f_1,f_2,g_1,g_2\}$ is a circuit.  Thus $g_1 \neq g_2$.  Since $\{g_1,g_2\} \subseteq \cocl_{M \ba d}(F)$, we have
    \begin{align*}
    \lambda_{M \ba d}(F \cup \{g_1,g_2\}) &= r_{M \ba d}(F \cup \{g_1,g_2\}) + r^*_{M \ba d}(F) - 8 \\
    &\le (r(F) + 1) +4 - 8 = 1;
  \end{align*}
  a contradiction.
%
    On the other hand, if $g_2 = f_3$, say, then $g_1 \neq f_3$, and there is a cocircuit $C_1^* \subseteq F \cup g_1$ of $M \ba d$ with $g_1 \in E(M \ba d)-F$.
    Since $\{f_1,f_2,f_5,g_1,g_2\}=\{f_1,f_2,f_3,f_5,g_1\}$ contains a circuit and $f_5 \notin \cl(\{f_1,f_2,f_3\})$, this circuit must contain $g_1$.  But then $g_1 \in \cl_{M \ba d}(F) \cap \cocl_{M \ba d}(F)$; a contradiction.
  \end{slproof}
  The \lcnamecref{flannew} now follows from \cref{flansl6,flansl7}.
  \end{proof}

  Next we address the case where $M \ba d$ has a maximal $5$-element flan~$F$.  We can break this into two cases depending on whether or not $d$ fully blocks $F$.  The next \lcnamecref{5flan} primarily deals with the case where $d$ fully blocks $F$.  However, we use a more general hypothesis than this, as the same argument also applies in one situation that arises when $d$ does not fully block $F$.
  More specifically, suppose $F$ has the ordering $(f_1,f_2,f_3,f_4,f_5)$.
  The following \lcnamecref{5flan} applies when
    any $4$-element circuit containing $\{f_i,f_5,d\}$ for $i \in \{1,2\}$ is not contained in $F \cup d$.
  In particular, observe that this is the case when $d$ fully blocks $F$.

\begin{proposition}
    \label{5flan}
    Let $M$ be a $3$-connected matroid with a $3$-connected matroid~$N$ as a minor, where $|E(N)| \ge 4$ and every triangle or triad of $M$ is \unfortunate.
    Suppose that $M \ba d$ is $3$-connected and has a maximal $5$-element flan~$F$ with ordering $(f_1,f_2,f_3,f_4,f_5)$, where
    any $4$-element circuit containing $\{f_i,f_5,d\}$ for $i \in \{1,2\}$ is not contained in $F \cup d$,
    and $M \ba d \ba f_3$ has an $N$-minor.
    Then $M$ has an $N$-detachable pair.
\end{proposition}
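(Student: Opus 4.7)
My plan is to mirror the final case of the preceding proof of \cref{flansl6}, adapted to $|F|=5$. The hypothesis on $4$-circuits through $\{f_i,f_5,d\}$ is tailored precisely to bypass the early subcases of \cref{flansl6} and land us in its concluding analysis.

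First, I would establish that $M\ba d/f_i/f_5$ has an $N$-minor for $i\in\{1,2\}$, by an argument identical to \cref{flannewnsl}: starting from the $N$-minor in $M\ba d\ba f_3$, use \cref{m2.7} and the series pairs $\{f_1,f_2\}$ and $\{f_4,f_5\}$ of $M\ba d\ba f_3$ to push $N$-labels onto $\{f_1,f_5\}$ and $\{f_2,f_5\}$ for contraction. Next I would rule out triangles of $M\ba d$ through $f_1$, $f_2$, $f_5$: orthogonality with the triad $\{f_1,f_2,f_3\}$ prevents $f_1,f_2$ lying in a triangle of a $3$-connected matroid; for $f_5$, orthogonality with both triads of $F$ forces any triangle through $f_5$ to have form $\{f_j,f_3,f_5\}$ for some $j\in\{1,2\}$, but this would be an unfortunate triangle of $M$, contradicting the $N$-minor in $M\ba d/f_j$ via \cref{basicunfortunate}. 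Hence \cref{flanend}(iii) applies, giving that $M\ba d/f_i/f_5$ is $3$-connected for $i\in\{1,2\}$. Consequently $\{f_i,f_5\}$ is an $N$-detachable pair unless $\{d,f_i,f_5\}$ lies in a $4$-element circuit of $M$, and by hypothesis any such circuit has the form $\{d,f_i,f_5,g_i\}$ with $g_i\in E(M)-(F\cup d)$.

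From this point I would run the concluding argument of \cref{flansl6} essentially verbatim on the circuits $\{d,f_1,f_5,g_1\}$ and $\{d,f_2,f_5,g_2\}$. Circuit elimination together with $f_5\notin\cl(\{f_1,f_2,f_3\})$ rules out $g_1=g_2$, so we may assume $g_i\ne g_j$ for some reindexing, and in particular we work with $g_i\notin F$. Since $\{d,g_i\}$ is a parallel pair in $M/f_i/f_5$, switching labels shows $\{f_3,g_i\}$ is $N$-deletable. I would then prove $\co(M\ba f_3\ba g_i)$ is $3$-connected by the fully-closed $2$-separation argument from \cref{flansl6}: any $2$-separation $(X,Y)$ with $X$ fully closed and $\{f_1,f_2,d\}\subseteq X$ is forced, via the circuits $\{f_1,f_2,f_3,f_4\}$, $\{f_3,f_4,f_5\}^*$, and $\{d,f_j,f_5,g_j\}$, to pull $\{f_3,g_i\}$ into $\cl(X)$ or $\cl(Y)$, contradicting $3$-connectivity of $M$.

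So either $\{f_3,g_i\}$ is an $N$-detachable pair, or $M$ has a $4$-cocircuit $C^*_i$ containing $\{f_3,g_i\}$, and I would derive a contradiction by analysing $C^*_i$. Orthogonality with the circuit $\{f_1,f_2,f_3,f_4\}$ forces $C^*_i$ to meet $\{f_1,f_2,f_4\}$, and orthogonality with $\{d,f_i,f_5,g_i\}$ forces it to meet $\{d,f_i,f_5\}$. The subcases $d\in C^*_i$ and $|C^*_i\cap F|\le 2$ are handled exactly as in \cref{flansl6} via $\lambda_{M\ba d}(F)$ and orthogonality with the flan circuits; the case $C^*_i\subseteq F$ is automatically excluded here since $g_i\notin F$ (this is where the $|F|=5$ setting is in fact simpler than the $|F|=6$ case, since there is no $f_6$-related cosegment to rule out). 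Finally, applying circuit elimination to $\{d,f_1,f_5,g_1\}$ and $\{d,f_2,f_5,g_2\}$ yields a circuit in $\{f_1,f_2,f_5,g_1,g_2\}$ that contradicts either $f_5\notin\cl(\{f_1,f_2,f_3\})$ or the cocircuit structure of $C^*_i$, completing the proof.

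The main obstacle will be the technical bookkeeping for the $C^*_i$ analysis: ensuring that each case closes cleanly without the extra leverage that $f_6$ provided in \cref{flansl6}, and verifying throughout that $g_i\notin F$ (guaranteed by our hypothesis) is enough to exclude the configurations that in the $6$-element flan required $f_6\in\cl(F-f_6)$.
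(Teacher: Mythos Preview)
Your overall plan is sound through the point where you obtain the circuits $\{d,f_i,f_5,g_i\}$ with $g_i\notin F$ and reduce to analysing a $4$-element cocircuit $C^*_i$ of $M$ containing $\{f_3,g_i\}$. But your claim that the subcase $|C^*_i\cap F|\le 2$ ``is handled exactly as in \cref{flansl6}'' does not go through. In \cref{flansl6}, that subcase forces $C^*_i\cap F=\{f_i,f_3\}$, and the contradiction comes from orthogonality with a circuit contained in $\{f_j,f_3,f_4,f_5,f_6\}$ (for $j\neq i$) that passes through $f_3$ but \emph{avoids} $f_i$; the existence of such a circuit depends on $f_6\in\cl(F-f_6)$. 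When $|F|=5$, the only circuit of $M\ba d$ contained in $F$ is $\{f_1,f_2,f_3,f_4\}$, which contains $f_i$, so there is no circuit through $f_3$ avoiding $f_i$ to appeal to. Consequently the configuration $C^*_i=\{f_i,f_3,g_i,h_i\}$ with $h_i\in E(M)-(F\cup\{d,g_1,g_2\})$ is a genuine possibility that your argument does not exclude, and your final circuit-elimination step does not give a contradiction against it.

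This is precisely where the paper's proof diverges from \cref{flansl6} and does substantial extra work. The paper first records the dichotomy (their claim \cref{delcands}): either $f_i\in C^*_i$, or $g_i\in\cocl(F\cup d)$. The case $f_i\in C^*_i$ with outside element $h_i$ is then handled by \cref{5flansc}, which shows that $\{f_5,h_i\}$ is an $N$-contractible pair with $\si(M/f_5/h_i)$ $3$-connected, forcing $h_i\in\cl(F\cup\{d,g_i\})$. A further claim (\cref{5flanscomb}) shows both $g_1,g_2$ fall into the $f_i\in C^*_i$ case and that $h_1\neq h_2$, after which a connectivity count on $F\cup\{d,g_1,g_2,h_1,h_2\}$ yields $\lambda\le 1$. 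You will need to supply this additional layer of argument; your acknowledged worry that ``$g_i\notin F$ is enough'' to close the gap left by the absence of $f_6$ is not borne out.
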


\begin{proof}
  We start by showing that either there is an $N$-detachable pair in $F$, or there are certain $4$-element circuits in $M$ containing $d$ and intersecting $F$.

  \begin{sublemma}
    \label{5flansb1}
    For each $i \in \{1,2\}$, the matroids $M\ba d/f_i/f_5$ and $M\ba f_3/f_i/f_5$ have $N$-minors.
  \end{sublemma}
  \begin{slproof}
    The matroid $M \ba d \ba f_3$ has an $N$-minor, and $\{f_1,f_2\}$ and $\{f_4,f_5\}$ are series pairs in this matroid, so $M \ba d \ba f_3 /f_i/f_5$ has an $N$-minor for $i \in \{1,2\}$. 
  \end{slproof}

  Now it follows from \cref{5flansb1} that none of $f_1$, $f_2$, and $f_5$ is contained in an \unfortunate\ triangle.
  Hence, we can apply \cref{flanend} to deduce that $M \ba d/f_i/f_5$ is $3$-connected for each $i \in \{1,2\}$.
  Then $\{f_i,f_5\}$ is an $N$-detachable pair for $i \in \{1,2\}$ unless $d$ is in a parallel pair in $M/f_i/f_5$.
  Since neither $f_i$ nor $f_5$ is contained in an \unfortunate\ triangle, we have that $\{d,f_i,f_5,g_i\}$ is a circuit in $M$ 
  for some $g_i \in E(M) - \{d,f_i,f_5\}$.
  By hypothesis, $g_i \notin F$ for $i \in \{1,2\}$.
  Moreover, if $g_1 = g_2$, then there is a circuit contained in $\{d,f_1,f_2,f_5\}$; a contradiction.
  So $g_1$ and $g_2$ are distinct elements of $E(M\ba d)-F$.

  \begin{sublemma}
    \label{predelcands}
    $\{f_1,f_2,g_1,g_2\}$ is a circuit of $M$.
  \end{sublemma}
  \begin{slproof}
    As $r(\{d,f_5,f_1,f_2,g_1,g_2\}) \le 4$ and $d \notin \cl(E(M)-\{f_3,f_4,f_5\})$, we have $r(\{f_1,f_2,g_1,g_2\}) \le 3$.
    Since neither $f_1$ nor $f_2$ is in an \unfortunate\ triangle, we deduce that $\{f_1,f_2,g_1,g_2\}$ is a circuit.
  \end{slproof}

  We now work towards showing that, for each $i \in \{1,2\}$, either $\{f_3,g_i\}$ is an $N$-detachable pair, or $\{f_3,g_i\}$ is contained in a $4$-element cocircuit.
  To this end, we start by showing that $M \ba f_3 \ba g_i$ has an $N$-minor for $i \in \{1,2\}$.
  Let $i \in \{1,2\}$.
  By \cref{5flansb1}, $M \ba f_3 / f_i / f_5$ has an $N$-minor.
  Since $\{d,g_i\}$ is a parallel pair in this matroid and $|E(N)| \ge 4$, the matroid $M \ba f_3 \ba g_i$ has an $N$-minor.

  \begin{sublemma}
    \label{delcands2}
    Let $i \in \{1,2\}$.
    If $\{f_3,g_i\}$ is not contained in a $4$-element cocircuit of $M$, then $\{f_3,g_i\}$ is an $N$-detachable pair.
  \end{sublemma}
  \begin{slproof}
    We give the proof for $i=2$; the argument is almost identical when $i=1$.
    Observe that neither $f_3$ nor $g_2$ is in a triad of $M$, since $M \ba f_3$ and $M \ba g_2$ have $N$-minors.
    Let $(P,Q)$ be a $2$-separation of $M \ba f_3 \ba g_2$.
    Since $\{f_3,g_2\}$ is not contained in a $4$-element cocircuit of $M$,
    we have that $(\fcl(P),Q-\fcl(P))$ and $(P-\fcl(Q),\fcl(Q))$ are $2$-separations in $M \ba f_3 \ba g_2$.
    So we may assume that $P$ is fully closed.
    As $\{f_1,f_2,d\}$ is a triad in $M \ba f_3 \ba g_2$, 
    without loss of generality $\{f_1,f_2,d\} \subseteq P$.
    If $\{f_5,g_1\}$ meets $P$, then $\{f_5,g_1\} \subseteq P$, due to the circuit $\{f_1,f_5,d,g_1\}$, and $f_4 \in P$ as well, due to the triad $\{f_4,f_5,d\}$.
    But then $(P \cup \{f_3,g_2\},Q)$ is a $2$-separation of $M$; a contradiction.
    So $\{f_5,g_1\} \subseteq Q$, and, similarly, $f_4 \in Q$.
    Now consider $\fcl(Q)$.  Due to the triad $\{f_4,f_5,d\}$, we have $d \in \fcl(Q)$, and thus $\{f_1,f_2\} \subseteq \fcl(Q)$ due to the circuits $\{f_1,f_5,d,g_1\}$ and $\{f_2,f_5,d,g_2\}$.
    Thus $(P-\fcl(Q),\fcl(Q) \cup \{f_3,g_2\})$ is a $2$-separation of $M$; a contradiction.
    So $M \ba f_3 \ba g_2$ is $3$-connected.
    Since $\{f_3,g_2\}$ is $N$-deletable, this completes the proof of \cref{delcands2}.
  \end{slproof}

  It remains to consider the case where $\{f_3,g_i\}$ is contained in a $4$-element cocircuit for each $i \in \{1,2\}$. 
  We next prove two claims regarding the elements that appear in such a cocircuit.

  \begin{sublemma}
    \label{delcands}
    Let $i \in \{1,2\}$.
    If $\{f_3,g_i\}$ is in a $4$-element cocircuit of $M$, then
    either this cocircuit contains $f_i$, or $g_i \in \cocl(F \cup d)$.
  \end{sublemma}
  \begin{slproof}
    Let $C^*$ be a $4$-element cocircuit of $M$ containing $\{f_3,g_i\}$.
    Pick $i'$ such that $\{i,i'\} = \{1,2\}$.
    By orthogonality, $C^*$ meets $\{f_1,f_2,f_4\}$ and $\{f_i,f_5,d\}$.
    Thus, either $f_i \in C^*$, or $C^*$ meets $\{f_{i'},f_4\}$ and $\{f_5,d\}$ in which case $g_i \in \cocl(F \cup d)$.
  \end{slproof}

  \begin{sublemma}
    \label{5flansc}
    Suppose $\{f_i,f_3,g_i,h_i\}$ is a cocircuit of $M$, for some $i \in \{1,2\}$ and $h_i \in E(M) - (F \cup \{d,g_i\})$.
    Then, either $M$ has an $N$-detachable pair, or $h_i \in \cl(F \cup \{d,g_i\})-\{g_1,g_2\}$.
  \end{sublemma}
  \begin{slproof}
    First, we will show that if $\{f_5,h_i\}$ is not contained in a $4$-element circuit, then $\{f_5, h_i\}$ is an $N$-detachable pair.
    Pick $i'$ such that $\{i,i'\} = \{1,2\}$.
    Observe that $\{f_i,f_3,g_1,g_2\}$ is not a cocircuit, by orthogonality with the circuit $\{f_{i'},f_5,d,g_{i'}\}$.  So $h_i \neq g_{i'}$.

    Let $(P,Q)$ be a $2$-separation in $M / f_5 / h_i$, where neither $P$ nor $Q$ is contained in a parallel class.  So $(\fcl(P), Q- \fcl(P))$ is also a $2$-separation.
    Without loss of generality,
    $\{f_i,d,g_i\} \subseteq P$.  If $P$ meets $\{f_{i'},f_3\}$, then
    $\{f_{i'},f_3\} \subseteq P$ due to the cocircuit $\{f_1,f_2,f_3,d\}$, and $f_4 \in P$ due to the circuit $\{f_1,f_2,f_3,f_4\}$.
    But then $(P \cup \{f_5,h_i\},Q)$ is a $2$-separation of $M$; a contradiction.
    So $\{f_{i'},f_3\} \subseteq Q$.  
    Since $\{f_1,f_2,g_1,g_2\}$ is a circuit, by \cref{predelcands}, we have $g_{i'} \in Q$, otherwise $f_{i'} \in \fcl(P)=P$.
    Now consider the $2$-separation $(P',Q') = (P-\fcl(Q),\fcl(Q))$.
    We have $d \in Q'$, due to the triangle $\{f_{i'},d,g_{i'}\}$, and it follows that 
    $f_i \in Q'$, due to the cocircuit $\{f_1,f_2,f_3,d\}$; 
    $f_4 \in Q'$, due to the circuit $\{f_1,f_2,f_3,f_4\}$; and
    $g_i \in Q'$, due to the triangle $\{f_i,d,g_i\}$.
    So $(P', Q' \cup \{f_5,h_i\})$ is a $2$-separation of $M$; a contradiction.
    So $M / f_5 / h_i$ is $3$-connected up to parallel pairs.

    We claim that $M /f_5 / h_i$ has an $N$-minor.
    Since $M /f_i/f_5 \ba f_3$ has an $N$-minor, there is an $N$-labelling $(C,D)$ with $\{f_i,f_5\} \subseteq C$ and $f_3 \in D$.
    As $\{g_i,d\}$ is a parallel pair in $M/f_i/f_5$, we may assume, up to switching the $N$-labels on $g_i$ and $d$, that $g_i \in D$.
    Now, in $M\ba f_3 \ba g_i$, we have that $\{f_i,h_i\}$ is a series pair, so, after switching the $N$-labels on $f_i$ and $h_i$, we obtain an $N$-labelling $(C',D')$ with $\{h_i,f_5\} \subseteq C'$. So $M / h_i / f_5$ has an $N$-minor, as claimed.

    We may now assume that $\{f_5,h_i\}$ is contained in a $4$-element circuit,
    otherwise $M$ has an $N$-detachable pair.
    By orthogonality, this circuit meets $\{f_3,f_4,d\}$ and $\{f_i,f_3,g_i\}$.
    If it does not contain $f_3$, then $h_i \in \cl(\{f_i,f_4,f_5,d,g_i\})$, as required.
    So suppose it contains $f_3$.  Then, again by orthogonality, it also meets $\{f_1,f_2,d\}$, in which case
    $h_i \in \cl(F \cup d)$.
  \end{slproof}

  Observe that $\{g_1,g_2\} \nsubseteq \cocl(F \cup d)$.
  Indeed, if $\{g_1,g_2\} \subseteq \cocl(F \cup d)$, then
  \begin{align*}
    \lambda(F \cup \{d,g_1,g_2\}) 
    &= r(F \cup d) + r^*(F \cup d) - 8 \\
    &\le 5 + 4 - 8 = 1;
  \end{align*}
  a contradiction.
  So there exists some $\ell \in \{1,2\}$ such that $g_\ell \notin \cocl(F \cup d)$.
  Then, by \cref{delcands}, $M$ has a cocircuit $\{f_\ell,f_3,g_\ell, h_\ell\}$ for some $h_\ell \in E(M) - \{f_\ell,f_3,g_\ell\}$. 
  In fact, $h_\ell \notin F \cup d$, since $g_\ell \notin \cocl(F \cup d)$.
  Thus, by \cref{5flansc}, we may assume that $h_\ell \in \cl(F \cup \{d,g_\ell\})$ and $h_\ell \notin \{g_1,g_2\}$.

  \begin{sublemma}
    \label{5flanscomb}
    For each $i \in \{1,2\}$, we have $g_i \notin \cocl((F \cup \{d,g_1,g_2\})-g_i)$. Moreover,
    there are distinct elements $h_1,h_2 \in E(M) -(F \cup \{d,g_1,g_2\})$ such that $\{f_1,f_3,g_1,h_1\}$ and $\{f_2,f_3,g_2,h_2\}$ are cocircuits of $M$.
  \end{sublemma}
  \begin{slproof}
    Consider $\lambda(F \cup \{d,g_1,g_2,h_\ell\})$.
    Observe that $\{g_1,g_2,h_\ell\} \subseteq \cl(F \cup d)$ and 
    $h_\ell \in \cocl(F \cup g_\ell)$.
    Thus,
    \begin{align*}
      \lambda(F \cup \{d,g_1,g_2,h_\ell\}) &= r(F \cup d) + r^*(F \cup \{d,g_1,g_2\}) - 9 \\
      &\le r^*(F \cup \{d,g_1,g_2\}) -4.
    \end{align*}
    Now if either $g_1 \in \cocl(F \cup \{d,g_2\})$ or $g_2 \in \cocl(F \cup \{d,g_1\})$, then $$\lambda(F \cup \{d,g_1,g_2,h_\ell\}) \le (r^*(F \cup d)+1) -4 = 1;$$ a contradiction.

    By \cref{delcands}, $\{f_1,f_3,g_1\}$ and $\{f_2,f_3,g_2\}$ are each contained in a $4$-element cocircuit of $M$.  Let these cocircuits be $\{f_1,f_3,g_1,h_1\}$ and $\{f_2,f_3,g_2,h_2\}$ respectively.
    Observe that, for each $i \in \{1,2\}$, we have $h_i \in E(M)- (F \cup \{d,g_1,g_2\})$, since $g_i \notin \cocl((F \cup \{d,g_1,g_2\})-g_i)$.

    Suppose that $h_1 = h_2$.
    Then $\{f_1,f_2,f_3,g_1,g_2\}$ contains a cocircuit, by cocircuit elimination.
    Since $g_1 \notin \cocl(F \cup g_2)$ and $g_2 \notin \cocl(F \cup g_1)$, it follows that $\{f_1,f_2,f_3\}$ is a cocircuit of $M$; a contradiction.
  \end{slproof}

  Now, by \cref{5flanscomb}, there are distinct elements $h_1,h_2 \in E(M) - (F \cup \{d,g_1,g_2\})$ such that
  $\{h_1,h_2\} \subseteq \cl(F \cup \{d,g_1,g_2\}) = \cl(F \cup d)$.
%
  Note that 
  $\{h_1,h_2\} \subseteq \cocl(F \cup \{g_1,g_2\})$.
  Thus,
  \begin{align*}
    \lambda(F \cup \{d,g_1,g_2,h_1,h_2\}) &= r(F \cup d) + r^*(F \cup \{d,g_1,g_2\}) - 10 \\
    &\le 5 + (r^*(F \cup d)+2) -10 \\
    &= 1;
  \end{align*}
  a contradiction.
  This completes the proof.
\end{proof}

Next we handle the case where $M \ba d$ has a maximal $5$-element flan and $d$ does not fully block $F$.  Since $d$ blocks the triads of $M \ba d$ contained in $F$, we have that $d \in \cl_M(F)$. 

\begin{proposition}
  \label{flan5structure}
  Let $M$ be a $3$-connected matroid with a $3$-connected matroid~$N$ as a minor, where $|E(N)| \ge 4$, and every triangle or triad of $M$ is \unfortunate.
  Let $d$ be an element of $M$ such that $M\backslash d$ is $3$-connected 
  and has a  maximal $5$-element flan~$F$ with ordering $(f_1,f_2,f_3,f_4,f_5)$, where $d \in\cl_M(F)$. 
  Suppose that either
  \begin{enumerate}[label=\rm(\alph*)]
    \item $M \ba d \ba f_5$ has an $N$-minor with $|\{f_1,\dotsc,f_4\} \cap E(N)| \le 1$, or
    \item $M /f_i /f_{i'}$ has an $N$-minor for all distinct $i,i' \in \seq{3}$.
  \end{enumerate}
  Then one of the following holds:
  \begin{enumerate}
    \item $M$ has an $N$-detachable pair, 
    \item $F\cup d$ is a \twisted\ of $M$, 
    \item $F\cup d$ is an \pspider\ of $M$, or
    \item $F\cup d$ is a \tvamoslike\ of $M^*$.
  \end{enumerate}
\end{proposition}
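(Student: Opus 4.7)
The plan is to assume $M$ has no $N$-detachable pair and to pin down the circuit and cocircuit structure inside $F \cup d$ until $F \cup d$ matches one of the three templates from \cref{sec-problematic}.

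I will first extract the circuit and cocircuit structure that is already present inside $F \cup d$. Since $\{f_1,f_2,f_3\}$ and $\{f_3,f_4,f_5\}$ are triads of the $3$-connected matroid $M \ba d$ but not of $M$, they extend in $M$ to the $4$-element cocircuits $C^*_1 = \{f_1,f_2,f_3,d\}$ and $C^*_2 = \{f_3,f_4,f_5,d\}$. The flan condition $f_4 \in \cl_{M \ba d}(\{f_1,f_2,f_3\})$, combined with orthogonality against the triad $\{f_1,f_2,f_3\}$, forces $Q = \{f_1,f_2,f_3,f_4\}$ to be a circuit of $M$. Finally, $d \in \cl_M(F)$ yields a circuit $C_d \subseteq F \cup d$ with $d \in C_d$; cocircuit elimination between $C^*_1$ and $C^*_2$ also yields a cocircuit of $M$ contained in $F$. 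All further progress will be driven by orthogonality of these objects against each other and against the $4$-element circuits that we will shortly be forced to produce.

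Next, under either hypothesis I will produce a useful $N$-labelling. Under (a), mirroring the opening of the proof of \cref{5flan}, the series pairs $\{f_1,f_2\}$ and $\{f_4,f_5\}$ inside $M \ba d \ba f_3$, combined with \cref{m2.7} and \cref{freeswitch}, yield $N$-labellings in which $\{f_i,f_5\}$ is $N$-contractible for each $i \in \{1,2\}$. Under (b), the cocircuit $C^*_1$ creates a series pair $\{f_3,d\}$ inside $M/f_1/f_2$, so switching the $N$-label from $f_3$ to $d$ and propagating through $C^*_2$ delivers the same conclusion. By \cref{flanend}, $M \ba d / f_i / f_5$ is $3$-connected for each $i \in \{1,2\}$; since $\{f_i,f_5\}$ cannot be $N$-detachable, $d$ must be in a parallel pair in $M/f_i/f_5$, forcing a $4$-element circuit $D_i = \{d,f_i,f_5,g_i\}$ of $M$. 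In contrast to the hypothesis of \cref{5flan}, here $g_i$ may lie in $F$, and because $d \in \cl_M(F)$ at least one of $g_1, g_2$ will.

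The bulk of the argument is then a case analysis on the pair $\{g_1,g_2\}$ and on $C_d$. Circuit elimination between $D_1$, $D_2$, and the quad-candidate $Q$, combined with orthogonality against $C^*_1$ and $C^*_2$, narrows the joint pattern to a short list. For each pattern I either exhibit a further $N$-detachable pair---typically by showing that some $\{f_3,g_i\}$, $\{f_i,f_j\}$ with $i,j \in \{1,2,3\}$, or $\{g_1,g_2\}$ becomes detachable after an $N$-label switch, using \cref{flanend}, \cref{r3cocirc}, \cref{r4cocirc}, Bixby's Lemma, or Tutte's Triangle Lemma---or else I force the circuit and cocircuit lists inside $F \cup d$ to line up with exactly one of the three templates. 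Roughly, when $\{d,f_5\}$ acts as a parallel-like extension of the quad $Q$, $F \cup d$ becomes an \pspider; when three interlocking $4$-element circuits weave through the three pairs $\{f_1,f_2\}$, $\{f_4,f_5\}$, $\{f_3,d\}$, $F \cup d$ becomes a \twisted; and when the further forced cocircuits inside $F\cup d$ are of size~$5$, $F \cup d$ becomes a \tvamoslike\ of $M^*$. In each subcase the minimality of the circuit and cocircuit families is verified by combining $\lambda_M(F \cup d) \le 2$ with orthogonality to exclude any stray circuits or cocircuits.

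The main obstacle will be the breadth of this case analysis: each candidate location for $g_1$, $g_2$, and $C_d$ generates its own chain of implications and its own list of potential further detachable pairs that must be eliminated. A secondary subtlety is handling hypothesis (b) in lockstep with (a); since (b) provides $N$-contractible pairs on the triad side of the flan rather than $N$-deletable pairs on the co-side, the bookkeeping of $N$-label switches must be routed through $C^*_1$ and $C^*_2$ and the series pair $\{f_3,d\}$ they create under contraction, rather than through the parallel pairs that deletion would produce.
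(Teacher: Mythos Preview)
Your overall framework matches the paper's: you correctly extract the cocircuits $C^*_1=\{f_1,f_2,f_3,d\}$, $C^*_2=\{f_3,f_4,f_5,d\}$ and the circuit $\{f_1,f_2,f_3,f_4\}$, then use \cref{flanend}(iii) to force the $4$-element circuits $D_i=\{d,f_i,f_5,g_i\}$, and you plan a case split on $\{g_1,g_2\}$. You are also right that $d\in\cl_M(F)$ together with maximality of the flan forces $g_1,g_2\in F$ immediately (the paper's reduction to \cref{5flan} when $g_1,g_2\notin F$ is in fact vacuous under the present hypothesis), and your ``covering'' principle using $\lambda_M(F\cup d)=2$ is exactly the paper's sublemma~\ref{anothercovering}.

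The gap is in the cocircuit side. Cocircuit elimination on $C^*_1$ and $C^*_2$ gives a cocircuit $C^*_3\subseteq F$, and your covering principle shows $\{f_1,f_2,f_4,f_5\}\subseteq C^*_3$; but nothing so far rules out $C^*_3=F$ itself. All three target templates require $\{f_1,f_2,f_4,f_5\}$ to be a \emph{$4$-element} cocircuit, so you must exclude the possibility that $F$ is a cocircuit of $M$. Orthogonality against $Q$, $D_1$, $D_2$ does not do this (each meets $F$ in at least two elements), and your list of candidate detachable pairs $\{f_3,g_i\}$, $\{f_i,f_j\}$ for $i,j\in\{1,2,3\}$, $\{g_1,g_2\}$ does not obviously dispose of this case either. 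The paper resolves this by a substantial detour you do not mention: it shows (sublemmas~\ref{flan6sub1}--\ref{flan6sub3}) that $M/f_1\backslash f_2\backslash f_5$ or $M/f_2\backslash f_1\backslash f_5$ is $3$-connected with an $N$-minor, and then the absence of an $N$-detachable pair forces a genuine $4$-element cocircuit $\{f_1,f_2,f_5,z\}$; a further argument shows $z\in F$, and the covering principle then pins $z=f_4$. This is the missing engine in your outline, and without it the case analysis on $\{g_1,g_2\}$ cannot by itself certify which template you land in.

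One small slip: in the \tvamoslike\ of $M^*$, the $5$-element dependencies inside $F\cup d$ are circuits of $M$ (that is, cocircuits of $M^*$), not cocircuits of $M$ as you wrote.
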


\begin{proof}
  First, we observe that each element in $F-f_5$ is $N$-deletable in $M \ba d$.
  Indeed, if (a) holds, then since $(F-f_5, \{f_5\}, E(M \ba d)-F)$ is a cyclic $3$-separation of $M \ba d$, and $F-f_5$ is a circuit, this follows from \cref{doublylabel}(i). 
  On the other hand, if (b) holds, then since $M / f_i / f_i'$ has an $N$-minor for all distinct $i,i' \in \seq{3}$, and $\{f_1,f_2,f_3,f_4\}$ is a circuit, it follows that each element in $F-f_5$ is $N$-deletable up to an $N$-label switch.

  Now each triad of $M \ba d$ contained in $F$ is not an \unfortunate\ triad,
  so $\{f_1,f_2,f_3,d\}$ and $\{f_3,f_4,f_5,d\}$ are cocircuits of $M$.
  Moreover, as $M \ba d \ba f_3$ has an $N$-minor, and $\{f_1,f_2\}$ and $\{f_4,f_5\}$ are parallel pairs in this matroid,
    $M \ba d/f_i / f_5$ has an $N$-minor for $i \in \{1,2\}$.
  By \cref{flanend}(iii), $M \ba d / f_i / f_5$ is $3$-connected, for $i \in \{1,2\}$.  Thus, 
  assuming (i) does not hold,
  we deduce the existence of $4$-element circuits $\{f_1,f_5,d,g_1\}$ and $\{f_2,f_5,d,g_2\}$. 

  We claim that $\{g_1,g_2\} \subseteq F$ or $\{g_1,g_2\} \subseteq \cl(F \cup d)-(F \cup d)$.
  Suppose $g_1 \notin F$.
  Since $F$ is a maximal flan, $g_1 \notin \cl(F)$.
  By circuit elimination, $\{f_1,f_2,f_5,g_1,g_2\}$ contains a circuit.
  If this circuit contains $g_1$, then $g_1 \in \cl(F \cup g_2)$, so $g_2 \notin F$, and $\{g_1,g_2\} \subseteq \cl(F \cup d)-F$ as required.
  So suppose $\{f_1,f_2,f_5,g_2\}$ is a circuit.  Then $g_2 \in F$, since $F$ is a maximal flan, so
  $g_2 \in \{f_3,f_4\}$.  It follows that $F \subseteq \cl(\{f_1,f_2,g_2\})$; a contradiction.

  If $g_1,g_2 \notin F$, then we can apply \cref{5flan}, so (i) holds.
  So we may assume that $\{g_1, g_2\} \subseteq F$.
  Observe that 
  since $\{f_1,f_2,f_3,f_4\}$ and $\{f_1,f_5,d,g_1\}$ 
  are circuits,
  every element of $F \cup d$ is in a circuit contained in $F \cup d$.

\begin{sublemma}
\label{anothercovering}
If $C_1$ and $C_2$ are distinct circuits of $M$ contained in $F \cup d$,
then $F \cup d = C_1\cup C_2$. Similarly, if 
$C_1^*$ and $C_2^*$ are distinct cocircuits of $M$ contained in $F \cup d$,
then $F \cup d = C_1^*\cup C_2^*$.
\end{sublemma}

\begin{slproof}
The set $F \cup d$ is exactly $3$-separating in $M$, and $r(F \cup d)=4$, so
$r(E(M)-(F \cup d))=r(M)-2$.
Suppose that $C_1^*\subseteq F \cup d$ and $C_2^*\subseteq F \cup d$ are distinct cocircuits of $M$.
Then $E(M)-(C_1^*\cup C_2^*)$ is a flat of rank $r(M)-2$. Thus, if $x\in(F\cup d)-(C_1^*\cup C_2^*)$, then $x\in\cl(E(M)-(F \cup d))$.
But this contradicts the fact that every element of $F \cup d$ is contained in some cocircuit that is itself contained in $F \cup d$.
The proof of the dual result follows in the same manner due to the fact that $r^*_M(F \cup d)=4$ and every element of $F \cup d$ is contained in a circuit that is itself contained in $F \cup d$.
\end{slproof}

  Now we assume that (i) does not hold, and show that either (ii), (iii), or (iv) holds.
  As $\{f_1,f_5,d,g_1\}$ and $\{f_2,f_5,d,g_2\}$ are circuits of $M$ contained in $F \cup d$, either $g_1 = f_2$ and $g_2=f_1$ so that these circuits coincide, or, by \cref{anothercovering}, $\{g_1,g_2\} = \{f_3,f_4\}$.
  We will show that in the former case (iii) or (iv) holds, whereas in the latter case (ii) holds.

  \begin{sublemma}
    \label{flan6sub1}
    $M/f_1 \ba f_2 \ba f_5$ and $M/f_2 \ba f_1 \ba f_5$ have $N$-minors.
  \end{sublemma}
  \begin{slproof}
  First suppose that (a) holds.
  Since $M \ba d \ba f_5$ has an $N$-minor and $M \ba d \ba f_5 / f_1 /f_3$ is connected, \cref{m2.7} implies that 
  $M \ba f_5 / f_1 /f_3$ has an $N$-minor.  Since $\{f_2,f_4\}$ is a parallel pair in this matroid, $M /f_1 \ba f_2 \ba f_5$ has an $N$-minor, up to an $N$-label switch.
  Similarly, $M \ba f_5 / f_2/f_3$ has an $N$-minor, and, up to an $N$-label switch, 
  $M /f_2 \ba f_1 \ba f_5$ has an $N$-minor.

  Now suppose (b) holds.
  Recall that either $\{f_i,f_4,f_5,d\}$ and $\{f_{i'},f_3,f_5,d\}$ are circuits for some $\{i,i'\} = \{1,2\}$, or $\{f_1,f_2,f_5,d\}$ is a circuit.  Assume the former.
  Since $M/f_{i'}/f_3$ has an $N$-minor, and $\{f_i,f_4\}$ and $\{f_5,d\}$ are parallel pairs in this matroid, $M/f_{i'} \ba f_i \ba f_5$ has an $N$-minor.
  Moreover, $M \ba d \ba f_3$ has an $N$-minor, where $\{f_i,f_{i'}\}$ and $\{f_4,f_5\}$ are series pairs in this matroid, so $M \ba f_3 / f_i /f_4$ has an $N$-minor.  But $\{f_5,d\}$ is a parallel pair in this matroid, so $M \ba f_5 / f_i /f_4$ has an $N$-minor.  Now $\{f_{i'},f_3\}$ is a parallel pair in this matroid, so $M / f_i \ba f_{i'} \ba f_5$ has an $N$-minor as required.

  Now we assume that $\{f_1,f_2,f_5,d\}$ is a circuit.
  Since, for any $\{i,i'\} = \{1,2\}$,
  the matroid $M/f_i/f_{i'}$ has an $N$-minor,
  and $\{f_3,f_4\}$ and $\{f_5,d\}$ are parallel pairs in this matroid, $M/f_{i'} \ba f_4 \ba f_5$ has an $N$-minor.
  Since $\{f_3,d\}$ is a series pair in this matroid, $M/f_{i'}/f_3 \ba f_5$ has an $N$-minor.  Now $\{f_i,f_4\}$ is a parallel pair in this matroid, so $M/f_{i'} \ba f_i \ba f_5$ has an $N$-minor as required.
  \end{slproof}

\begin{sublemma}
\label{flan6sub2}
Either $M/f_1\backslash f_2\ba f_5$ or $M/f_2\backslash f_1\ba f_5$ is $3$-connected.
\end{sublemma}

\begin{slproof}
Let $\{i,j\} = \{1,2\}$.
We start by showing that either $M/f_i\backslash f_j\ba f_5$ is $3$-connected, or there is a $4$-element cocircuit $\{f_j, f_j', f_5, h_j\}$ where $f_j' \in \{f_3,f_4\}$ and $h_j \in E(M)-(F \cup d)$.
Consider the $3$-connected matroid $M/f_i$.
Observe that $r_{M/f_i}((F - f_i) \cup d) = 3$.
Since $r_M(F \cup d) = 4$, it follows that $\{f_i,f_3,f_4,d\}$ is independent in $M$. 
So $\{f_3,f_4,f_5,d\}$ is a rank-$3$ cocircuit in $M/f_i$, with $f_5 \in \cl_{M/f_i}(\{f_3,f_4,d\})$.
Thus, by \cref{r3cocirc}, $\co(M/f_i\backslash f_5)$, and indeed $M/f_i\backslash f_5$, is $3$-connected.
Now $(\{f_3,f_4,d\},\{f_j\},E(M)-(F \cup d))$ is a vertical $3$-separation in $M/f_i \ba f_5$.
By Bixby's Lemma, 
$\co(M/f_i \ba f_j \ba f_5)$ is $3$-connected.  
So
$M/f_i\backslash f_j\ba f_5$ is $3$-connected unless $f_j$ is in a triad of $M/f_i\backslash f_5$
that meets both $\{f_3,f_4,d\}$ and $E(M)-(F \cup d)$.
By orthogonality, this triad does not contain $d$.
So $\{f_j,f_j',f_5,h_j\}$ is a cocircuit of $M$ where $f_j' \in \{f_3,f_4\}$ and $h_j \in E(M)-(F \cup d)$, as claimed.

Suppose neither $M/f_2\ba f_1\ba f_5$ nor $M/f_1\ba f_2\ba f_5$ is $3$-connected.  Then $\{f_1,f_1',f_5,h_1\}$ and $\{f_2,f_2',f_5,h_2\}$ are cocircuits, where $f_1',f_2' \in \{f_3,f_4\}$.

Recall that $M /f_i \ba f_j \ba f_5$ has an $N$-minor
when $\{i,j\} = \{1,2\}$.
Since $\{f_j',h_j\}$ is a series pair in this matroid, it follows that $M/f_i/h_j$ has an $N$-minor.

Next, we claim that either $M\ba d/f_1/h_2$ or $M\ba d/f_2/h_1$ is $3$-connected. 
Suppose not, 
so $M\ba d/f_i/h_j$ is not $3$-connected for $\{i,j\} = \{1,2\}$.
Observe that $(F-f_i,\{h_j\},E(M)-(F \cup \{d,h_j\}))$ is a cyclic $3$-separation of $M\ba d/f_i$,
so $\si(M\ba d/f_i/h_j)$ is $3$-connected, by Bixby's Lemma.
Thus $M\ba d/f_i/h_j$ contains a parallel pair, implying that $\{f_i,h_j\}$ is contained in a $4$-element circuit in $M \ba d$ that, by orthogonality, intersects $\{f_1,f_2,f_3\}$ in two elements.
But if $f_3$ is in this circuit, then it also meets $\{f_4,f_5\}$, by orthogonality, in which case $h_j \in \cl(F)$; a contradiction.  We deduce that $\{f_1,f_2,h_j,q_j\}$ is a circuit for some $q_j \in E(M)-(F \cup d)$.

Now $\{f_1,f_2,h_1,q_1\}$ and $\{f_1,f_2,h_2,q_2\}$ are both circuits,
so $r(\{f_1,h_1,h_2,q_1,q_2\}) \le 4$.
Since $f_1 \in \cocl(\{f_2,f_3,d\})$, the set $\{h_1,h_2,q_1,q_2\}$ contains a circuit.
But such a circuit intersects one of the cocircuits $\{f_1,f_5,f_1',h_1\}$ or $\{f_2,f_5,f_2',h_2\}$ in a single element, contradicting orthogonality.


Up to labels, we may now assume that $M\ba d/f_1/h_2$ is $3$-connected.
So either $\{f_1,h_2\}$ is an $N$-detachable pair, contradictory to the assumption that (i) does not hold, or there is a $4$-element circuit of $M$ containing $\{d,f_1,h_2\}$.  By orthogonality, such a circuit meets $\{f_3,f_4,f_5\}$.  So $\{d,f_1,f_\ell,h_2\}$ is a circuit, for $\ell \in \{3,4,5\}$.
But then $h_2 \in \cl(F \cup d) \cap \cocl(F \cup d)$ where $F \cup d$ is exactly $3$-separating; a contradiction.

Thus $M/f_1\backslash f_2\ba f_5$ or $M/f_2\backslash f_1\ba f_5$ is $3$-connected as required.
\end{slproof}

Now \cref{flan6sub1,flan6sub2}, 
together with the assumption that $M$ has no $N$-detachable pairs, implies that
$M$ has a $4$-element cocircuit $\{f_1,f_2,f_5,z\}$.

\begin{sublemma}
\label{flan6sub3}
  If $z \notin F$, then $\{f_3,z\}$ is an $N$-detachable pair.
\end{sublemma}
\begin{slproof}
  First we show that $M/f_3/z$ has an $N$-minor.
  Suppose (a) holds.
  Since $M \ba d \ba f_5$ has an $N$-minor and $M \ba d \ba f_5 / f_2 /f_3$ is connected, \cref{m2.7} implies that $M \ba f_5 / f_2 /f_3$ has an $N$-minor.
  Since $\{f_1,f_4\}$ is a parallel pair in this matroid, $M /f_3 \ba f_1 \ba f_5$ has an $N$-minor, up to an $N$-label switch.
  Now suppose (b) holds.
  Since $M/f_1/f_2$ has an $N$-minor,
  and $\{f_3,f_4\}$ and $\{f_5,d\}$ are parallel pairs in this matroid, $M/f_2 \ba f_4 \ba f_5$ has an $N$-minor.
  Since $\{f_3,d\}$ is a series pair in this matroid, $M/f_2/f_3 \ba f_5$ has an $N$-minor.  Now $\{f_1,f_4\}$ is a parallel pair in this matroid, so $M/f_3 \ba f_1 \ba f_5$ has an $N$-minor.
  So in either case $M /f_3 \ba f_1 \ba f_5$ has an $N$-minor.
  Now $\{f_1,f_2,f_5,z\}$ is a cocircuit of $M$, so $\{f_2,z\}$ is a series pair in $M /f_3 \ba f_1 \ba f_5$.  It follows that $M /f_3 /z$ has an $N$-minor as required.

  Next we show that $\si(M/f_3/z)$ is $3$-connected.
  Evidently, $z \in \cocl(F \cup d)$,
  where $F \cup d$ is exactly $3$-separating, so $z \notin \cl(F \cup d)$, by \cref{gutsstayguts},
  implying $z \in \cocl(E(M)-(F \cup \{d,z\}))$, by \cref{swapSepSides}.
  Note that $M/f_3$ is $3$-connected by \cref{flanend}(i). Now
  $((F-f_3) \cup d, \{z\}, E(M)-(F \cup \{d,z\}))$ is a cyclic $3$-separation in $M/f_3$.
  It follows that $\co(M/f_3 \ba z)$ is not $3$-connected, so
  $\si(M/f_3 /z)$ is $3$-connected by Bixby's Lemma, as required.

  Now, if $M/f_3/z$ is not $3$-connected, then $M$ has a $4$-element circuit containing $\{f_3,z\}$.  By orthogonality, such a circuit~$C$ intersects the cocircuits $\{f_1,f_2,f_5,z\}$, $\{f_1,f_2,f_3,d\}$, and $\{f_3,f_4,f_5,d\}$ in at least two elements.  So $C \subseteq F \cup \{d,z\}$.  But then $z \in \cl(F \cup d)$; a contradiction. 
  We deduce that $M/f_3/z$ is $3$-connected.
\end{slproof}

By \cref{flan6sub3}, we may now assume that $z \in \{f_3,f_4\}$.
Since $\{f_1,f_2,f_3,d\}$ is a cocircuit of $M$, it follows from \ref{anothercovering} that $z = f_4$ so that $\{f_1,f_2,f_4,f_5\}$ is a cocircuit.
Now we examine the potential configurations of the $4$-element circuits $\{f_1,f_5,d,g_1\}$ and $\{f_2,f_5,d,g_2\}$, each of which is contained in $F \cup d$. If $g_1=f_3$, then $g_2=f_4$ due to \ref{anothercovering}.
In this situation, it is easily checked that $F \cup d$ is a \twisted\ of $M$, so that (ii) holds, as illustrated in \cref{tw2}. Similarly, if $g_1=f_4$, we obtain a \twisted\ as shown in \cref{tw1}.

\begin{figure}
  \begin{subfigure}{0.45\textwidth}
    \begin{tikzpicture}[rotate=90,scale=0.7,line width=1pt]
      \tikzset{VertexStyle/.append style = {minimum height=5,minimum width=5}}
      \clip (-2.5,-6) rectangle (4.4,2);
      \draw (0,0) .. controls (-3,2) and (-3.5,-2) .. (0,-4);
      \draw (0,0) -- (4.0,0.9);
      \draw (0,-2) -- (2.5,-2.2); 
      \draw (0,-4) -- (3.8,-4.9); 

      \Vertex[x=3.0,y=0.67,LabelOut=true,Lpos=180,L=$f_1$]{a2}
      \Vertex[x=2.0,y=0.45,LabelOut=true,Lpos=180,L=$f_2$]{a3}

      \Vertex[x=2.5,y=-2.2,LabelOut=true,Lpos=90,L=$f_3$]{b1}
      \Vertex[x=0.64,y=-2.056,LabelOut=true,Lpos=-45,L=$d$]{b2}

      \Vertex[x=3.8,y=-4.9,LabelOut=true,L=$f_4$]{c1}
      \Vertex[x=2.8,y=-4.67,LabelOut=true,L=$f_5$]{c2}

      \draw[dashed] (3.8,-4.9) .. controls (2.0,-2) .. (4.0,0.9);
      \draw[dashed] (2.8,-4.67) .. controls (1.0,-2) .. (3.0,0.67);
      \draw[dashed] (1.8,-4.45) .. controls (0.25,-2) .. (2.0,0.45);

      \draw (0,0) -- (0,-4);

      \SetVertexNoLabel
      \tikzset{VertexStyle/.append style = {shape=rectangle,fill=white}}
      \Vertex[x=4.0,y=0.9]{a1}
      \Vertex[x=1.5,y=-2.12]{b3}
      \Vertex[x=1.8,y=-4.45]{c3}

    \end{tikzpicture}
    \caption{When $g_1 = f_3$.} 
    \label{tw2}
  \end{subfigure}
  \begin{subfigure}{0.45\textwidth}
    \begin{tikzpicture}[rotate=90,scale=0.7,line width=1pt]
      \tikzset{VertexStyle/.append style = {minimum height=5,minimum width=5}}
      \clip (-2.5,-6) rectangle (4.4,2);
      \draw (0,0) .. controls (-3,2) and (-3.5,-2) .. (0,-4);
      \draw (0,0) -- (4.0,0.9);
      \draw (0,-2) -- (2.5,-2.2); 
      \draw (0,-4) -- (3.8,-4.9); 

      \Vertex[x=3.0,y=0.67,LabelOut=true,Lpos=180,L=$f_1$]{a2}
      \Vertex[x=2.0,y=0.45,LabelOut=true,Lpos=180,L=$f_2$]{a3}

      \Vertex[x=2.5,y=-2.2,LabelOut=true,Lpos=90,L=$f_4$]{b1}
      \Vertex[x=0.64,y=-2.056,LabelOut=true,Lpos=-45,L=$f_5$]{b2}

      \Vertex[x=3.8,y=-4.9,LabelOut=true,L=$f_3$]{c1}
      \Vertex[x=2.8,y=-4.67,LabelOut=true,L=$d$]{c2}

      \draw[dashed] (3.8,-4.9) .. controls (2.0,-2) .. (4.0,0.9);
      \draw[dashed] (2.8,-4.67) .. controls (1.0,-2) .. (3.0,0.67);
      \draw[dashed] (1.8,-4.45) .. controls (0.25,-2) .. (2.0,0.45);

      \draw (0,0) -- (0,-4);

      \SetVertexNoLabel
      \tikzset{VertexStyle/.append style = {shape=rectangle,fill=white}}
      \Vertex[x=4.0,y=0.9]{a1}
      \Vertex[x=1.5,y=-2.12]{b3}
      \Vertex[x=1.8,y=-4.45]{c3}

    \end{tikzpicture}
    \caption{When $g_1 = f_4$.} 
    \label{tw1}
  \end{subfigure}
  \caption{The two possible labellings of the \twisted\ when \cref{flan5structure}(ii) holds.}
  \label{tws}
\end{figure}

The final possibilities arise when $g_1=f_2$. In this case, \ref{anothercovering} forces $g_2=f_1$. 
First, suppose that $\{f_3,f_4,f_5,d\}$ is a circuit.
Then $F \cup d$ is an \pspider\ with associated partition $(\{f_3,f_4,f_5,d\},\{f_1,f_2\})$, as illustrated in \cref{pss}; so (iii) holds.
We may now assume $\{f_3,f_4,f_5,d\}$ is independent.
Then, since $r(F \cup d) = 4$, the element $f_1$ (respectively, $f_2$) is in a circuit contained in $\{f_1,f_3,f_4,f_5,d\}$ (respectively, $\{f_2,f_3,f_4,f_5,d\}$).
Since $\{f_1,f_2,f_5,d\}$ is also a circuit contained in $F \cup d$, \ref{anothercovering} implies that these circuits contain $\{f_3,f_4\}$.
Similarly, due to the circuit $\{f_1,f_2,f_3,f_4\}$, \ref{anothercovering} implies that these circuits contain $\{f_5,d\}$.
So $\{f_1,f_3,f_4,f_5,d\}$ and $\{f_2,f_3,f_4,f_5,d\}$ are circuits.
It follows that $F\cup d$ is a \tvamoslike\ in $M^*$, so (iv) holds.  The labelling of the \tvamoslike\ in the dual is illustrated in \cref{tw3}.
\end{proof}

\begin{figure}
    \begin{tikzpicture}[rotate=90,scale=0.8,line width=1pt]
      \tikzset{VertexStyle/.append style = {minimum height=5,minimum width=5}}
      \clip (-2.5,2) rectangle (3.0,-6);
      \draw (0,0) .. controls (-3,2) and (-3.5,-2) .. (0,-4);
      \draw (0,0) -- (2,-2) -- (0,-4);
      \draw (0,0) -- (2.5,0.5) -- (2,-2);
      \draw (0,0) -- (2.25,-0.75);
      \draw (2,-2) -- (1.25,0.25);

      \Vertex[x=1.25,y=0.25,LabelOut=true,L=$f_5$,Lpos=180]{c1}
      \Vertex[x=2.25,y=-0.75,LabelOut=true,L=$f_3$,Lpos=90]{c2}
      \Vertex[x=2.5,y=0.5,LabelOut=true,L=$f_4$,Lpos=180]{c3}
      \Vertex[x=1.5,y=-0.5,LabelOut=true,L=$d$,Lpos=135]{c4}
      \Vertex[x=1.33,y=-2.67,LabelOut=true,L=$f_2$,Lpos=45]{c5}
      \Vertex[x=0.67,y=-3.33,LabelOut=true,L=$f_1$,Lpos=45]{c6}

      \draw (0,0) -- (0,-4);

      \SetVertexNoLabel
      \tikzset{VertexStyle/.append style = {shape=rectangle,fill=white}}
    \end{tikzpicture}
  \caption{The labelling of the \pspider\ when \cref{flan5structure}(iii) holds.}
  \label{pss}
\end{figure}

\begin{figure}
  \centering
  \begin{tikzpicture}[rotate=90,scale=0.65,line width=1pt]
    \tikzset{VertexStyle/.append style = {minimum height=5,minimum width=5}}
    \clip (-2.78,-6) rectangle (5.0,2);
    \draw (0,0) .. controls (-3,2) and (-3.5,-2) .. (0,-4);
    \draw (0,0) -- (4.0,0.9);
    \draw (0,-2) -- (2.5,-2.2); 
    \draw (0,-4) -- (3.8,-4.9); 

    \Vertex[x=2.0,y=0.45,LabelOut=true,Lpos=180,L=$d$]{a3}

    \Vertex[x=2.5,y=-2.2,LabelOut=true,Lpos=90,L=$f_4$]{b1}
    \Vertex[x=0.61,y=-2.05,LabelOut=true,Lpos=35,L=$f_5$]{b2}

    \Vertex[x=3.00,y=-4.7,LabelOut=true,L=$f_1$]{c1}
    \Vertex[x=2.15,y=-4.5,LabelOut=true,L=$f_2$]{c2}

    \Vertex[x=4.0,y=0.9,LabelOut=true,Lpos=180,L=$f_3$]{a1}

    \draw[dashed] (3.8,-4.9) .. controls (2.0,-2) .. (4.0,0.9);
    \draw[dashed] (1.3,-4.3) .. controls (0.25,-2) .. (2.0,0.45);

    \draw (0,0) -- (0,-4);

    \SetVertexNoLabel
    \tikzset{VertexStyle/.append style = {shape=rectangle,fill=white}}
    \Vertex[x=3.8,y=-4.9]{c1}
    \Vertex[x=1.3,y=-4.3]{c3}

  \end{tikzpicture}
  \caption{The labelling of the \tvamoslike\ of $M^*$
    when \cref{flan5structure}(iv) holds.}
  \label{tw3}
\end{figure}

By combining \cref{flannew,5flan,flan5structure}, we obtain the following:

\begin{corollary}
  \label{flancorollary}
  Let $M$ be a $3$-connected matroid with a $3$-connected matroid~$N$ as a minor, where $|E(N)| \ge 4$, and every triangle or triad of $M$ is \unfortunate.
  Let $d$ be an element of $M$ such that $M\backslash d$ is $3$-connected 
  and has a flan~$F$ with ordering $(f_1,f_2,\dotsc,f_t)$ where $t \ge 5$. 
  Suppose that $M \ba d \ba f_5$ has an $N$-minor with $|\{f_1,\dotsc,f_4\} \cap E(N)| \le 1$. 
  Then either
  \begin{enumerate}
    \item $M$ has an $N$-detachable pair, or
    \item $F\cup d$ is either a \twisted\ of $M$, an \pspider\ of $M$, or a \tvamoslike\ of $M^*$.
  \end{enumerate}
\end{corollary}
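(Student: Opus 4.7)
The plan is to dispatch to \cref{flannew}, \cref{flan5structure}, and \cref{5flan} by case analysis. First I would extend $F$ to a maximal flan $F^{*}$ of $M\ba d$; since flan orderings are extended only at the tail, the initial segment $(f_{1},\dotsc,f_{t})$ is preserved, so the hypothesis on $M\ba d\ba f_{5}$ continues to apply to $F^{*}$, and it suffices to treat the maximal case.

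If $|F^{*}|\ge 6$, then \cref{flannew} applies directly to $F^{*}$ and yields outcome~(i). Otherwise $F=F^{*}$ is a maximal $5$-element flan, and I would split on the position of $d$. When $d\in\cl_{M}(F)$, \cref{flan5structure} applies under its hypothesis~(a), which matches our assumption exactly; its three structural conclusions correspond to the \twisted, \pspider, and \tvamoslike\ outcomes of the corollary. When $d\notin\cl_{M}(F)$, I would apply \cref{5flan}: since every triad of $M$ is unfortunate and $M\ba f_{5}$ has an $N$-minor (by \cref{basicunfortunate} and its dual), $\{f_{3},f_{4},f_{5}\}$ cannot be a triad of $M$, so $\{f_{3},f_{4},f_{5},d\}$ must be a cocircuit of $M$; the $4$-element circuit condition in \cref{5flan} then follows automatically, since a circuit containing $\{f_{i},f_{5},d\}$ that lies inside $F\cup d$ would force $d\in\cl_{M}(F)$.

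The main obstacle is verifying the remaining hypothesis of \cref{5flan}, that $M\ba d\ba f_{3}$ has an $N$-minor. I would derive this by exploiting the cocircuit $\{f_{3},f_{4},f_{5},d\}$: in $M\ba f_{5}$ it restricts to the triad $\{f_{3},f_{4},d\}$, and by performing $N$-label swaps across the series pairs that arise in $M\ba f_{5}\ba d$ and $M\ba f_{5}\ba f_{4}$, one should migrate the deletion label from $f_{5}$ to $f_{3}$ while keeping $d$ labelled for deletion. A further technicality is the sub-case where $\{f_{1},f_{2},f_{3}\}$ is itself an unfortunate triad of $M$, which would make $M\ba f_{3}$ have no $N$-minor and block the transfer; here one can check that $\lambda_{M}(F\cup d)=3$, so $F\cup d$ is not even $3$-separating in $M$ and outcome~(ii) is impossible, forcing one to produce an $N$-detachable pair by a separate direct argument using the cocircuit elimination from the two cocircuits $\{f_{1},f_{2},f_{3}\}$ and $\{f_{3},f_{4},f_{5},d\}$. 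Once these verifications are in hand, \cref{5flan} delivers outcome~(i) and completes the proof.
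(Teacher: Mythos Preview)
Your overall strategy matches the paper's: reduce to a maximal flan, apply \cref{flannew} if it has at least six elements, and otherwise split the $5$-element case according to whether $d\in\cl_M(F)$, dispatching to \cref{flan5structure} and \cref{5flan}. The paper phrases the split as ``$d$ fully blocks $F$'' versus not, but in this context the two dichotomies coincide.

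The one place where your argument diverges from the paper is the verification that $M\ba d\ba f_3$ has an $N$-minor, and this is also where your write-up is weakest. The paper handles this in one line via \cref{doublylabel}: since $(F-f_5,\{f_5\},E(M\ba d)-F)$ is a cyclic $3$-separation of $M\ba d$ and $F-f_5=\{f_1,f_2,f_3,f_4\}$ is a circuit, part~(i) of \cref{doublylabel} gives that every element of $F-f_5$ lying in $\cl_{M\ba d}(F-f_5)$ is $N$-deletable in $M\ba d$. As $f_3\in\cl(\{f_1,f_2,f_4\})$, this immediately yields the desired $N$-minor of $M\ba d\ba f_3$, with no reference to cocircuits of $M$ involving $d$ and no label-swap gymnastics. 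Your proposed route through series pairs in $M\ba f_5\ba d$ and $M\ba f_5\ba f_4$ can be made to work, but as written it is only a sketch, and it is not clear which parallel pair lets you move the deletion label from $f_5$ onto $f_3$.

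More importantly, the sub-case you single out, where $\{f_1,f_2,f_3\}$ is an \unfortunate\ triad of $M$, simply does not occur. Once \cref{doublylabel} gives that $M\ba d\ba f_1$ (and $M\ba d\ba f_3$) has an $N$-minor, the dual of \cref{basicunfortunate} forces $\{f_1,f_2,f_3\}$ not to be a triad of $M$; equivalently $d\in\cocl_M(F)$, which is exactly why ``$d$ does not fully block $F$'' collapses to ``$d\in\cl_M(F)$''. So there is no need for the separate cocircuit-elimination argument you allude to, and your proof becomes complete once you replace the label-swap paragraph with the one-line appeal to \cref{doublylabel}.
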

\begin{proof}
  If $t \ge 6$, then (i) holds by \cref{flannew}.
  So suppose $t=5$ and $F$ is a maximal flan.
  First, suppose that $d$ fully blocks $F$.  Towards an application of \cref{5flan}, we claim that $f_3$ is $N$-deletable in $M \ba d$.
  Observe that $(F-f_5, \{f_5\}, E(M \ba d) - F)$ is a cyclic $3$-separation of $M \ba d$.  Since $F-f_5$ is a circuit, \cref{doublylabel}(i) implies that 
  $M \ba d \ba f_3$ has an $N$-minor, as claimed.  Now, by \cref{5flan}, we may assume that $d$ does not fully block $F$.
  Then $d \in \cl(F)$, and so, by \cref{flan5structure}, the \lcnamecref{flancorollary} follows.
\end{proof}

\section{Unveiling the $3$-separating set $X$}
\label{secunveil}

In this section, we prove our main result, \cref{foundation}.
For the entirety of the section, we work under the following hypotheses.
  Let $M$ be a $3$-connected matroid and let $N$ be a $3$-connected minor of $M$ where
  $|E(N)| \ge 4$, and
  every triangle or triad of $M$ is \unfortunate.
  Suppose, for some $d \in E(M)$, that $M \ba d$ is $3$-connected and has a cyclic $3$-separation $(Y, \{d'\}, Z)$ with $|Y| \ge 4$, where $M \ba d \ba d'$ has an $N$-minor with $|Y \cap E(N)| \le 1$.

First, we handle the cases where $Y$ contains either a $4$-element cosegment, or a particular configuration of two triads.

\begin{lemma}
  \label{coseg}
  If $Y$ contains a $4$-element cosegment of $M \ba d$, then $M$ has an $N$-detachable pair.
\end{lemma}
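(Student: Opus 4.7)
Let $S^{*} = \{s_1, s_2, s_3, s_4\}$ denote the $4$-element cosegment of $M \ba d$ contained in $Y$. Since $r^{*}_{M \ba d}(S^{*}) = 2$ and $M \ba d$ is $3$-connected, the set $S^{*}$ must be independent in $M \ba d$ (otherwise $\lambda_{M \ba d}(S^{*}) \le 1$, contradicting $3$-connectivity), so $r_M(S^{*}) = 4$. Moreover, by the dual of \cref{rank2Remove}, $M \ba d / s$ is $3$-connected for each $s \in S^{*}$.

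The crux is to show that no $3$-element subset of $S^{*}$ is a triad of $M$. If $T^{*} = \{s_i, s_j, s_k\}$ were a triad of $M$, then $T^{*}$ would be \unfortunate; the dual of \cref{basicunfortunate} would forbid any element of $T^{*}$ from lying in the deletion set $D$ of an $N$-labelling of $M$, and the definition of \unfortunate\ triad would permit at most one element of $T^{*}$ to lie in the contraction set $C$. But $M \ba d \ba d'$ has an $N$-minor with $|Y \cap E(N)| \le 1$, so at least three of $s_1, s_2, s_3, s_4$ lie in $C \cup D$, yielding at least two of $T^{*}$ in $E(N)$---contradicting $|Y \cap E(N)| \le 1$. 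Hence each triad of $M \ba d$ inside $S^{*}$ extends to a $4$-element cocircuit $\{s_i, s_j, s_k, d\}$ of $M$. Cocircuit elimination applied to two such cocircuits (eliminating $d$) then shows that $S^{*}$ itself is a $4$-element cocircuit of $M$. Consequently $P := S^{*} \cup d$ satisfies $M^{*}|P \cong U_{3,5}$, that is, $P$ is a $5$-element coplane of $M$.

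Having established this structure, I would invoke a dualized version of \cref{basicplaneupgrade} applied to the $3$-connected matroid $M^{*}$ with the $5$-element plane $P$, distinguished element $d^{*} := d$, and an appropriate $p \in S^{*}$. Condition~(a) becomes ``$P$ or $P - p$ is a circuit of $M$'', which is handled by cases on whether $d \in \cl_M(S^{*})$, together with orthogonality against the established cocircuits and the \unfortunate-triangle hypothesis to classify the small circuits of $M$ inside $P$. Condition~(b) becomes ``$M \ba d \ba p'$ has an $N$-minor for each $p' \in S^{*} - p$''; this follows from the $N$-labelling of $M \ba d \ba d'$ once $N$-labels are transferred within $S^{*}$ using the dual of \cref{freeswitch} applied to series pairs arising in deletion minors $M \ba s_i$ from the newly-established cocircuit structure on $P$. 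The residual case $d \notin \cl_M(S^{*})$---where $P$ fails to be $3$-separating in $M$---is handled by invoking \cref{planeupgrade} instead. I expect the main obstacle to be the label-transfer step: namely, converting contraction labels within $S^{*}$ to deletion labels in a setting where the naive series-pair switch need not be directly available, which will likely require a short case analysis paralleling those in the proofs of \cref{basicplaneupgrade} and \cref{planeupgrade}. Dualizing the resulting $N^{*}$-detachable pair in $M^{*}$ then produces the required $N$-detachable pair in $M$.
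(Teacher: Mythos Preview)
Your overall strategy matches the paper's: establish that $P = S^{*} \cup d$ is a $5$-element coplane and then invoke the duals of \cref{basicplaneupgrade} and \cref{planeupgrade}. Your argument that no $3$-element subset of $S^{*}$ is a triad of $M$---via the $N$-grounded hypothesis combined with the bound $|Y \cap E(N)| \le 1$---is a clean alternative to the paper's route, which reaches the same conclusion by first showing (through \cref{doublylabel}) that three elements of $S^{*}$ are doubly $N$-labelled.

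However, there are two genuine gaps.

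First, you do not address condition~(c) of \cref{planeupgrade}. Dualised, this requires: for any $p' \in S^{*}$ and any distinct $u,v \in \cocl_M(S^{*}) - P$, either $M/p'/u$ or $M/p'/v$ has an $N$-minor. This is not a formality; it is the longest part of the paper's proof. The argument there passes to the $3$-connected matroid $M \ba d / p'$, checks that the cyclic $3$-separation $(Y,\{d'\},Z)$ descends appropriately, and then applies \cref{doublylabel}(ii) together with \cref{presingle} to rule out the exceptional configuration. Your label-transfer sketch does not touch this condition.

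Second, the paper splits on whether $S^{*} \subseteq \cocl_{M \ba d}(Z)$, and this case genuinely needs separate treatment. When $S^{*} \subseteq \cocl_{M \ba d}(Z)$, \cref{doublylabel}(i) gives no information about $N$-deletability of elements of $S^{*}$, so condition~(b) cannot be verified by that route; the paper instead observes that $S^{*} \cup d'$ is a cosegment of $M \ba d$, deduces $M^{*}|(S^{*} \cup \{d,d'\}) \cong U_{3,6}$, and finishes via the dual of \cref{6pointplane2}. Your proposal does not recognise this case, and your series-pair label-transfer does not obviously produce three $N$-deletable elements of $S^{*}$ here. Even in the generic case $|S^{*} \cap \cocl_{M \ba d}(Z)| \le 1$, the paper obtains condition~(b) directly from \cref{doublylabel} rather than by ad hoc switches; the ``short case analysis'' you anticipate is unnecessary once this split is made, and the real work lies in condition~(c), which you have not mentioned.
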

\begin{proof}
  Suppose $X$ is a $4$-element cosegment of $M \ba d$ contained in $Y$.
  If $X \subseteq \cocl(Z)$, then $X \cup d'$ is a cosegment in $M\ba d$. 
  Since $M \ba d \ba d'$ has an $N$-minor, neither $d$ nor $d'$ is in a triad of $M$, and any pair of elements in $\cocl_{M \ba d \ba d'}(X)$ is $N$-contractible.
  In particular, there are no triads of $M$ contained in $\cocl(X \cup \{d,d'\})$, and so
  $M^*|(X \cup \{d,d'\}) \cong U_{3,6}$.
  Now, by \cref{6pointplane2}, $M$ has an $N$-detachable pair.

        So we may assume that $|X \cap \cocl(Z)| \le 1$.
        Let $x \in X$, where $x \in \cocl(Z)$ if such an element exists.
        Since $x' \in \cocl(X -\{x,x'\})$ for each $x' \in X-x$, we have $x' \notin \cl(\cocl(Z))$.
        Thus, by \cref{doublylabel}, each $x' \in X-x$ is 
        doubly $N$-labelled in $M \ba d$. 
        As $d$ blocks every triad of $M \ba d$ contained in $X$, the set $X \cup d$ is a $5$-element coplane in $M$.
        If $d$ does not fully block $X$, then $d \in \cl(X)$, in which case $X \cup d$ is $3$-separating in $M$, and
        $M$ has an $N$-detachable pair by the dual of \cref{basicplaneupgrade}.
        So we may assume that $d$ fully blocks $X$.

        Let $p' \in X-x$.
        Towards an application of \cref{planeupgrade}, we claim that 
        for distinct elements $u,v \in \cl(X)-X$, either $M/p' /u$ or $M/p' /v$ has an $N$-minor.
        Recall that $M \ba d/p'$ has an $N$-minor.
        By the dual of \cref{rank2Remove}, $M \ba d /p'$ is $3$-connected.
        Now $(Y-p', \{d'\}, Z)$ is a path of $3$-separations in $M \ba d/p'$.
        Let $Z' = \cocl_{M\ba d}(Z)-d'$ and $Y' = Y-Z'$.
        Then $(Y'-p', \{d'\}, Z')$ is a path of $3$-separations of $M \ba d/p'$ where $Z' \cup d'$ is coclosed, and $X-x \subseteq Y'$.
        Note that $Y'-p'$ contains a circuit in $M\ba d/p'$, since $Y'$ contains a circuit in $M \ba d$.
        In order to show that $(Y'-p', \{d'\}, Z')$ is a cyclic $3$-separation of $M\ba d/p'$, it remains only to observe that $d' \in \cocl_{M \ba d}(Y'-p')$,
        which follows from the fact that $p' \in \cocl_{M \ba d}(X-\{x,p'\})$.

        We may assume there are distinct elements $u,v \in \cl(X)-X$, otherwise the claim holds trivially.
        Then
        $\{u,v\} \subseteq \cl(Y)$.
        If $\{u,v\} \subseteq Y-p'$, then either $M/p'/u$ or $M/p'/v$ is $3$-connected by \cref{doublylabel}(ii).
        Moreover, if $\{u,v\} \cap Z \neq \emptyset$, then, since $d' \in \cocl_{M\ba d}(Y) \cap Z$, \cref{presingle} implies that $\{u,v\} -Z \neq \emptyset$.
        So suppose, without loss of generality, that $v \in Z$ and $u \in Y$.  By \cref{doublylabel}(ii) again, the claim holds unless $u \in \cl_{M/p'}(Z')$.
        But then it follows that $Y - \{p',u\}$ is exactly $3$-separating in $M\ba d/p'$, with $\{u,v\} \subseteq \cl_{M/p'}(Y-\{p',u\}) \cap Z$ and $d' \in \cocl(Y-\{p',u\}) \cap Z$, contradicting \cref{presingle}.

        Now $M$ has an $N$-detachable pair by the dual of \cref{planeupgrade}.
\end{proof}

\begin{lemma}
  \label{prespecifictriads}
  Suppose that $Y$ contains a set $X = \{s_1, s_2, t_1, t_2, t_3\}$ such that the following hold:
  \begin{enumerate}[label=\rm(\alph*)]
    \item each $x \in X$ is not in a triangle or triad of $M$;
    \item $\{s_1, s_2, t_3\}$ and $\{t_1, t_2, t_3\}$ are triads of $M \ba d$;
    \item for each $i \in \seq{3}$ there are elements $v_i, w_i \in \cl(X \cup d) - (X \cup d)$ such that $\{s_1, t_i, d, v_i\}$ and $\{s_2, t_i, d, w_i\}$ are circuits; and
    \item $P = \{v_1,v_2,v_3,w_1,w_2,w_3\}$ is a $6$-element rank-$3$ set, and if $P$ contains a triangle~$T$, then $T$ is either $\{v_i,v_j,w_k\}$ or $\{v_i,w_j,w_k\}$ for some $\{i,j,k\}=\{1,2,3\}$.
  \end{enumerate}
  Then $M$ has an $N$-detachable pair.
\end{lemma}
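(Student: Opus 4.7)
I would begin by extracting the cocircuit structure of $M$ forced on us by the hypotheses. Since $\{s_1,s_2,t_3\}$ and $\{t_1,t_2,t_3\}$ are triads of $M\ba d$ but, by (a), not triads of $M$, the element $d$ must block them; so $C_1^*=\{s_1,s_2,t_3,d\}$ and $C_2^*=\{t_1,t_2,t_3,d\}$ are cocircuits of $M$. I would then apply circuit elimination to the pairs of circuits given in (c): eliminating $t_i$ from $\{s_1,t_i,d,v_i\}$ and $\{s_2,t_i,d,w_i\}$ yields a circuit contained in $\{s_1,s_2,d,v_i,w_i\}$, and eliminating $d$ yields a circuit contained in $\{s_1,s_2,t_i,v_i,w_i\}$. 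Using (a) to rule out triangles or triads on elements of $X$, together with orthogonality against $C_1^*$ and $C_2^*$, I would pin down these derived circuits, and so obtain a precise description of $M|(X\cup\{d\}\cup P)$.

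Next, I would track the $3$-separating behaviour of this set. Because each $v_i,w_i\in\cl(X\cup d)$, the set $Q=X\cup\{d\}\cup P$ has rank equal to $r(X\cup d)$, and combining this with the cocircuit structure will show that $Q$ is $3$-separating in $M$ with $r(Q)\le 4$; dually, $Q$ carries a rank-$3$ coplane structure inherited from $C_1^*$ and $C_2^*$ via uncrossing (\cref{onetrick}). Inside this structure, hypothesis~(d) --- that $P$ has rank $3$, spans $6$ points, and any triangle in $P$ has the restricted form $\{v_i,v_j,w_k\}$ or $\{v_i,w_j,w_k\}$ --- yields either $M|P\cong U_{3,6}$ or a $5$-element plane $P'\subseteq P$ meeting both $\{v_1,v_2,v_3\}$ and $\{w_1,w_2,w_3\}$.

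Now I would transfer the $N$-minor information. Since $(Y,\{d'\},Z)$ is a cyclic $3$-separation of $M\ba d$ with $|Y\cap E(N)|\le 1$ and $X\cup P\subseteq Y\cup\cl(Y\cup d)$, applications of \cref{doublylabel} to $M\ba d$ (and its dual inside $M$) give that every element of $P$ lies outside $E(N)$ and, with one possible exception, is doubly $N$-labelled in $M$. I would use the parallel pairs arising in $M/v_i$ (from the triangle $\{s_1,t_i,d\}$ created by contracting a $4$-element circuit) and \cref{freeswitch} to perform $N$-label switches, verifying that a suitable element $d^*\in P$ is $N$-contractible together with enough neighbours to meet the hypotheses of \cref{basicplaneupgrade} or \cref{planeupgrade} (dualised if necessary).

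Finally, I would feed the $U_{3,5}$-restriction together with this $N$-labelling into \cref{basicplaneupgrade} (when $Q$ itself is the $3$-separator containing a cocircuit in $\cl(X\cup d)$) or \cref{planeupgrade} (when the plane is not itself $3$-separating), to conclude that $M$ has an $N$-detachable pair. The main obstacle will be the case analysis dictated by~(d): one must separately handle the generic case where $P$ carries no triangle and the two triangle cases permitted by~(d), and in each case verify that the derived $4$-element circuits and cocircuits satisfy the cocircuit/circuit hypotheses of the plane-upgrade propositions, and that no spurious $2$-separation of $M\ba p$ or $M/p'$ appears during the $N$-label switching. Once that case analysis is carried out, the result follows directly from the plane-upgrade lemmas.
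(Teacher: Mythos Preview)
Your plan is substantially more complicated than the paper's proof, and contains at least one concrete error. You claim that $Q=X\cup\{d\}\cup P$ will satisfy $r(Q)\le 4$; but since $P\subseteq\cl(X\cup d)$ we only get $r(Q)=r(X\cup d)$, and in the intended application (the proof of \cref{specifictriads}) one has $r_{M\ba d}(X)=4$ with $d\notin\cl(X)$, so $r(X\cup d)=5$. This undermines the ``$3$-separating with small rank'' picture on which you build the rest of the argument. More seriously, your plan to feed a $U_{3,5}$-restriction into \cref{basicplaneupgrade} or \cref{planeupgrade} runs into trouble: those propositions require, respectively, that $P$ (or $P-p$) be a cocircuit, or that $M/d^*$ be $3$-connected together with a delicate condition on $\cocl(P-d^*)-P$. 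None of these is delivered by the hypotheses here, and you give no indication of how to verify them.

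The paper's proof bypasses all of this. After deriving the circuits $\{s_1,s_2,v_i,w_i\}$ (by eliminating $d$ and using $t_i\in\cocl_{M\ba d}(\{t_1,t_2,t_3\}-t_i)$), it establishes in a short sublemma that $M\ba d\ba v_i$ and $M\ba d\ba w_i$ have $N$-minors for all $i$, via \cref{doublylabel} applied after contracting $s_1$ and $s_2$. The endgame is then immediate: if $P$ has no triangle, then $(M\ba d)|P\cong U_{3,6}$ and \cref{6pointplane} gives $M\ba d\ba p$ $3$-connected for any $p\in P$; if $P$ has a triangle $T$ (necessarily of the restricted form in~(d)), one checks by orthogonality that a fixed vertex of $T$, say $w_3$, is not in a triad of $M\ba d$, and then Tutte's Triangle Lemma yields that $M\ba d\ba w_3$ or $M\ba d\ba v_i$ is $3$-connected. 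Either way $\{d,p\}$ is the detachable pair. Your outline never invokes Tutte's Triangle Lemma, which is the decisive tool in the triangle case, and the plane-upgrade machinery you propose is neither needed nor obviously applicable.
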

\begin{proof}
  Let $i \in [3]$.
  Since $\{s_1,t_i,d,v_i\}$ and $\{s_2,t_i,d,w_i\}$ are circuits, $\{s_1,s_2,t_i,v_i,w_i\}$ contains a circuit, by circuit elimination.
But $t_i \in \cocl_{M \ba d}(\{t_1,t_2,t_3\}-t_i)$, so \cref{swapSepSides} and (a) imply that $\{s_1,s_2,v_i,w_i\}$ is a circuit.
  \begin{sublemma}
    \label{goodminors}
    Either $M$ has an $N$-detachable pair, or $M \ba d \ba v_i$ and $M \ba d \ba w_i$ have $N$-minors for each $i \in \seq{3}$.
  \end{sublemma}
  \begin{slproof}
Note that if $d' \in \cocl_{M \ba d}(\{t_1,t_2,t_3\})$, then $\{d',t_1,t_2,t_3\}$ is a cosegment in $M\ba d$ whose triads are blocked by $d$, so $\{d,d',t_1,t_2,t_3\}$ is a $5$-element plane in $M^*$.  But then, by the dual of \cref{basicplaneupgrade}, $M$ has an $N$-detachable pair.
So we may assume that $d' \notin \cocl_{M \ba d}(\{t_1,t_2,t_3\})$.
Now $M \ba d/s_1$ is $3$-connected by the dual of \cref{r3cocirc}, and $M \ba d \ba d' / s_1$ has an $N$-minor by \cref{m2.7}.
Applying \cref{doublylabel}(ii), 
we deduce that $M \ba d / s_1 / s_2$ has an $N$-minor, since $d' \notin \cocl_{M \ba d/s_1}(X-\{s_1,s_2\})$.
As $\{v_i,w_i\}$ is a parallel pair in $M \ba d /s_1 /s_2$, for each $i \in \seq{3}$, the matroids $M \ba d \ba v_i$ and $M \ba d \ba w_i$ have $N$-minors.
\end{slproof}

If $\{v_i,v_j,w_k\}$ and $\{v_i,w_j,w_k\}$ are independent for all $\{i,j,k\} = \{1,2,3\}$, then $M|P \cong U_{3,6}$, and $M$ has an $N$-detachable pair by \cref{6pointplane,goodminors}.
So, without loss of generality, we may assume that $\{v_i,v_j,w_3\}$ or $\{v_i,w_j,w_3\}$ is a triangle $T$ for some $\{i,j\} = \{1,2\}$.
We claim that $w_3$ is not in a triad of $M \ba d$.
Towards a contradiction, suppose that $T^*$ is a triad of $M \ba d$ containing $w_3$.
By (d), $\{v_1,v_2,w_1,w_2\}$ is a circuit $C$.
By orthogonality between $T^*$ and $T$, and between $T^*$ and $C$, we deduce that $T^* -w_3\subseteq \{v_1,v_2,w_1,w_2\}$.
But then $T^*$ intersects the circuit $\{s_1,s_2,v_3,w_3\}$ in a single element; a contradiction.

Now, by Tutte's Triangle Lemma, either $M \ba d\ba w_3$ or $M \ba d\ba v_i$ is $3$-connected.
By \cref{goodminors}, it follows that $M$ has an $N$-detachable pair, thus completing the proof.
\end{proof}

\begin{lemma}
  \label{specifictriads}
  Suppose that $Y$ contains a set $X = \{s_1, s_2, t_1, t_2, u\}$ such that the following hold:
  \begin{enumerate}[label=\rm(\alph*)]
    \item $\{s_1, s_2, u\}$ and $\{t_1, t_2, u\}$ are triads of $M \ba d$,\label{st3}
    \item $X$ is closed in $M \ba d$,\label{st1}
    \item $X$ is $3$-separating in $M \ba d$,\label{st2}
    \item $X$ is not a cosegment in $M \ba d$, and\label{st5}
    \item there are no $4$-element circuits contained in $X$.\label{st4}
  \end{enumerate}
  Then $M$ has an $N$-detachable pair.
\end{lemma}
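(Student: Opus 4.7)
The plan is to exploit the fact that the two triads $\{s_1,s_2,u\}$ and $\{t_1,t_2,u\}$ of $M\ba d$ meet only in $u$, and to feed the set $X\cup d$ (or some slight modification of it) into one of the earlier propositions, most likely \cref{flancorollary} or \cref{planeupgrade}, after enriching the picture with cocircuit information from $M$.

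First I would deal with deletability. Since $(Y,\{d'\},Z)$ is a cyclic $3$-separation of $M\ba d$ with $M\ba d\ba d'$ having an $N$-minor and $|Y\cap E(N)|\le 1$, \cref{doublylabel} shows that every element of $Y-\cocl_{M\ba d}(Z)$ is $N$-deletable, and that at most one element of $\cocl_{M\ba d}(Y)-d'$ fails to be $N$-contractible. As $X$ is closed and $3$-separating in $M\ba d$ but not a cosegment, uncrossing $X$ with $\cocl_{M\ba d}(Z)$ (using \cref{presingle} and \cref{gutsstayguts}) forces $|X\cap\cocl_{M\ba d}(Z)|\le 1$, so at most one element of $X$ can escape being $N$-deletable in $M$. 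In particular, at least three of $s_1,s_2,t_1,t_2$ are $N$-deletable.

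Next I would classify the cocircuit structure of $M$ on $X\cup d$. Each of the triads $\{s_1,s_2,u\}$ and $\{t_1,t_2,u\}$ of $M\ba d$ is either a triad of $M$ (hence $N$-grounded, which severely restricts its pairs via \cref{basicunfortunate}) or is blocked by $d$ and so extends to a $4$-cocircuit $\{s_1,s_2,u,d\}$ or $\{t_1,t_2,u,d\}$ of $M$. In the latter case, cocircuit elimination produces a further cocircuit contained in $\{s_1,s_2,t_1,t_2,d\}$, and since hypothesis (d) rules out $X$ being a cosegment, this auxiliary cocircuit must contain~$d$. Combining this with hypothesis (e), which forbids $4$-element circuits inside $X$, and with the observation that a triangle in $X$ would have to lie inside a triad by orthogonality (impossible), I can pin down the rank and corank of $X$ in $M\ba d$ and in $M$ to both be three.

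With the connectivity bookkeeping in hand, I would split on whether $d$ fully blocks $X$. If $d\in\cl_M(X)$, then $X\cup d$ is a $6$-element $3$-separating set of $M$, and the triads and circuits in $X\cup d$ allow me to recognise $(s_1,s_2,u,t_1,t_2)$ together with a suitable partner as a flan of $M\ba d$ of length five, whereupon \cref{flancorollary} delivers the $N$-detachable pair (the other three outcomes of that corollary are excluded by the no-$4$-circuit hypothesis on $X$ and the non-cosegment condition). If instead $d$ fully blocks $X$, then $X\cup d$ becomes a $5$-element coplane of $M$, and the dual of \cref{planeupgrade} applies to $X\cup d$ provided I can supply the required $N$-contractible pairs; these I produce by switching $N$-labels inside the two parallel pairs $\{s_1,s_2\}$ and $\{t_1,t_2\}$ of $M\ba d/u$, using \cref{freeswitch} and the fact that $M\ba d/u$ is $3$-connected up to parallel pairs.

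The principal obstacle, as usual for these arguments, is the bookkeeping in the mixed case where exactly one of the two triads is blocked by~$d$: then the cocircuit $\{s_i,s_j,u,d\}$ (say) interacts asymmetrically with the remaining genuine triad $\{t_1,t_2,u\}$ of $M$, and I cannot invoke the preceding propositions cleanly. To handle this, I would aim to show directly, via Tutte's Triangle Lemma applied to a suitable triangle of $M/u$ (whose existence comes from the failure of $X$ being a cosegment combined with the structure of the two $4$-cocircuits), that $M\ba s_i$ or $M\ba t_k$ is $3$-connected for some $i\in\{1,2\}$, $k\in\{1,2\}$, and then pair this element with a second $N$-deletable element supplied by \cref{doublylabel}, using the absence of $4$-circuits in $X$ to rule out the degenerate configurations that would otherwise obstruct $3$-connectivity of the double deletion.
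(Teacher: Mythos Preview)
Your approach has genuine gaps, and the paper's proof follows a fundamentally different route.

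First, a computational error: since $X$ is exactly $3$-separating in $M\ba d$ with $|X|=5$ and $r^*_{M\ba d}(X)=3$ (two triads meeting in one point, not a cosegment), you get $r_{M\ba d}(X)=4$, not $3$. This matters for everything that follows. In particular, $(s_1,s_2,u,t_1,t_2)$ is \emph{not} a flan ordering: a flan requires $f_4\in\cl(\{f_1,f_2,f_3\})$, i.e.\ $t_1\in\cl(\{s_1,s_2,u\})$, but $\{s_1,s_2,u\}$ is an independent triad of rank~$3$ while $r(X)=4$, so neither $t_1$ nor $t_2$ lies in that closure. So \cref{flancorollary} does not apply. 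Similarly, the coplane route is off: $X\cup d$ has six elements, and in $M$ the set $X$ need not satisfy $M^*|X\cong U_{3,5}$ once $d$ is restored (the cocircuits are now $\{s_1,s_2,u,d\}$ and $\{t_1,t_2,u,d\}$, which meet in two elements, not the configuration required for \cref{planeupgrade}).

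The paper instead works with \emph{contraction} pairs. From the hypotheses it shows every $x\in X$ is doubly $N$-labelled in $M\ba d$, then proves that $M\ba d/s/t$ is $3$-connected with an $N$-minor for each $s\in\{s_1,s_2,u\}$ and $t\in\{t_1,t_2,u\}$ with $s\neq t$ (using that $X$ is a rank-$4$, corank-$3$ cocircuit and the dual of \cref{r3cocirc} twice). Since $M$ has no $N$-detachable pair, each such contraction must create a parallel pair containing $d$, forcing $4$-element circuits $\{s_1,t_i,d,v_i\}$ and $\{s_2,t_i,d,w_i\}$ for $t_i\in\{t_1,t_2,u\}$. A separate sublemma shows no such circuit can lie inside $X\cup d$, so $v_i,w_i\in\cl(X\cup d)-(X\cup d)$. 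The six elements $\{v_1,v_2,v_3,w_1,w_2,w_3\}$ are then shown to be distinct of rank~$3$ with only restricted triangles, and the preceding lemma (\cref{prespecifictriads}) finishes by extracting an $N$-detachable deletion pair from this near-$U_{3,6}$ structure.
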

\begin{proof}
  Since $X$ is the union of two triads that meet at $u$, but $X$ is not a cosegment, $r_{M \ba d}^*(X) = 3$.  As $X$ is a $5$-element $3$-separating set, $r_{M \ba d}(X) = 4$.
  It follows that $E(M \ba d)-X$ is coclosed, due to \ref{st4}.
  Since $x \in \cocl_{M \ba d}(X-x)$ for each $x \in X$, we also have that $E(M \ba d)-X$ is closed.
  Hence each element in $X$ is doubly $N$-labelled in $M \ba d$ by \cref{doublylabel}.
  It follows that each $x \in X$ is not contained in an \unfortunate\ triangle or triad.

  Assume that $M$ does not contain an $N$-detachable pair.
\begin{sublemma}
    \label{sll1}
    For distinct $s \in \{s_1 , s_2, u\}$ and $t \in \{t_1 , t_2, u\}$, the matroid $M \ba d/s/t$ is $3$-connected and has an $N$-minor.
\end{sublemma}
\begin{slproof}
Let $s \in \{s_1, s_2\}$ and $t \in \{t_1, t_2, u\}$.
Since $X$ is a corank-$3$ circuit, and $s$ is not contained in a triangle,
the dual of \cref{r3cocirc} implies that $M\ba d /s$ is $3$-connected.
Moreover, $X-s$ is a corank-$3$ circuit in $M \ba d / s$, so $M \ba d / s / t$ is $3$-connected unless $\{s,t\}$ is contained in a $4$-element circuit of $M \ba d$. But, by orthogonality, such a circuit contains another element of $X$, and so, as $X$ is closed in $M \ba d$, the circuit is contained in $X$; a contradiction.  It follows by symmetry that $M \ba d / s /t$ is $3$-connected.

It remains to show that $M \ba d/s/t$ has an $N$-minor.
By swapping the labels on $\{s_1,s_2\}$ and $\{t_1,t_2\}$, if necessary, we may assume that $s \neq u$.
Recall that $M\ba d/s$ has an $N$-minor.
Now $M \ba d/s$ is $3$-connected by the dual of \cref{r3cocirc}, and $M \ba d \ba d' / s$ has an $N$-minor by \cref{m2.7}.
Applying \cref{doublylabel}(ii), we deduce that $M \ba d / s / t$ has an $N$-minor, since $t \in \cocl_{M\ba d}(\{t_1,t_2,u\}-t)$, so $t \notin \cl(E(M) - \{t_1,t_2,u\})$.
\end{slproof}

Now, as $M$ has no $N$-detachable pairs, \cref{sll1} implies that for each distinct pair $s,t$ with $s \in  \{s_1 , s_2, u\}$ and $t \in \{t_1 , t_2, u\}$, there is a circuit of $M$ containing $\{d,s,t\}$.
\begin{sublemma}
    \label{sll2}
    There are no $4$-element circuits of $M$ contained in $X \cup d$.
\end{sublemma}
\begin{slproof}
    Suppose $X \cup d$ contains a $4$-element circuit $C$.
    Then $d \in C$, by \ref{st4}.
    Let $S = \{s,s'\} \in \{\{s_1,s_2\}, \{t_1,t_2\}\}$, and $T = \{t,t',t''\} = X-S$.
    We may assume, without loss of generality, that $C=\{d,s,t,x\}$, where $x \neq t'$.
    Now $\{d,s,t'\}$ is also contained in a $4$-element circuit, $\{d,s,t',y\}$ say.  By circuit elimination, $\{s,t,t',x,y\}$ contains a circuit.  By \ref{st4}, $\{s,t,t',x\}$ is independent, so \ref{st1} implies that $y \in X$,
    and $y \notin \{s,t,t',x\}$,
    and thus $\{x,y\} = \{s',t''\}$.

    If $x = t''$, then $\{d,s,t,t''\}$ and $\{d,s,t',s'\}$ are circuits, but there is also a $4$-element circuit containing $\{d,s',t\}$.
    So let $\{d,s',t,z\}$ be a circuit, for some $z$.  Now $\{s,t,t'',s',z\}$ contains a circuit, by circuit elimination, 
    and it follows, by \ref{st1} and \ref{st4}, that $z \in X-\{s,s',t,t''\}$, so $z=t'$.
    But then circuit elimination on the circuits $\{d,s,t',s'\}$ and $\{d,s',t,t'\}$ implies that $\{s,s',t,t'\}$ contains a circuit; a contradiction.
    The argument is similar when $x=s'$.
\end{slproof}

Now, letting $t_3 = u$, 
for each $i \in \seq{3}$ there are elements $v_i, w_i \in \cl(X \cup d) - (X \cup d)$ such that $\{s_1, t_i, d, v_i\}$ and $\{s_2, t_i, d, w_i\}$ are circuits. 
Observe also that $d \notin \cl(X)$, since $v_i,w_i \notin \cl(X)$.

\begin{sublemma}
    \label{almostU36}
    Let $P = \{v_1,v_2,v_3,w_1,w_2,w_3\}$.
    Then $|P|=6$ and $r(P)=3$.
    Moreover, if $P$ contains a triangle~$T$, then $T$ is either $\{v_i,v_j,w_k\}$ or $\{v_i,w_j,w_k\}$ for some $\{i,j,k\}=\{1,2,3\}$.
\end{sublemma}
\begin{slproof}
If $v_i=v_{i'}$ for distinct $i,i' \in \seq{3}$, then $\{s_1, t_i,t_{i'},d\}$ contains a circuit, by the circuit elimination axiom, contradicting \ref{sll2}.
Similarly, the $w_i$ are pairwise distinct for $i \in \seq{3}$.
Say $v_i = w_j$ for some $i, j \in \seq{3}$. Then, again by circuit elimination, there is a circuit $\{s_1, s_2, t_i, t_j, v_i\}$.
But $X$ is closed in $M \ba d$, so $v_i \notin \cl(\{s_1, s_2, t_i, t_j\})$. Hence $\{s_1, s_2, t_i, t_j\}$ is a circuit of $M$, contradicting 
\ref{st4}.
Hence the elements $v_1, v_2, v_3, w_1, w_2, w_3$ are distinct.
By \ref{st2}, 
$\cl(X \cup d) - (X \cup d)$ has rank at most 3.
If $r(\{v_1,v_2,v_3\}) \le 2$, then $\{s_1,d,v_1,v_2,v_3\}$ has rank at most four, but spans the rank-$5$ set $X \cup d$; a contradiction.
A similar argument applies if $r(\{w_1,w_2,w_3\}) \le 2$, or, for some distinct $i,j \in [3]$ either $r(\{v_i,v_j,w_i\}) \le 2$ or $r(\{v_i,w_i,w_j\}) \le 2$.
\end{slproof}

The \lcnamecref{specifictriads} now follows from \cref{almostU36,prespecifictriads}.
\end{proof}

Finally, we come to the main result of this paper.
Recall that $d \in E(M)$,
the matroid $M \ba d$ is $3$-connected and has a cyclic $3$-separation $(Y, \{d'\}, Z)$ with $|Y| \ge 4$, and $M \ba d \ba d'$ has an $N$-minor with $|Y \cap E(N)| \le 1$.

\begin{theorem}
  \label{foundation}
  Suppose $M$ has no $N$-detachable pairs.
  Then there is a subset $X$ of $Y$ such that
  \begin{enumerate}
    \item $|X| \ge 4$ and 
      $X$ is $3$-separating in $M \ba d$, and\label{setup}
    \item either
      \begin{enumerate}[label=\rm(\alph*)]
        \item $X \cup \{c,d\}$ is an \pspider\ of $M$, a \twisted\ of $M$, or a \tvamoslike\ of $M^*$, for some $c \in \cocl_{M \ba d}(X)-X$;
          or\label{problemseps}
        \item for every $x \in X$, 
          \begin{enumerate}[label=\rm(\Roman*)]
            \item $\co(M \ba d \ba x)$ is $3$-connected,\label{ond}
            \item $M \ba d / x$ is $3$-connected, and\label{com}
            \item $x$ is doubly $N$-labelled in $M \ba d$.\label{onddl}
          \end{enumerate}\label{nonproblemseps}
      \end{enumerate}
  \end{enumerate}
\end{theorem}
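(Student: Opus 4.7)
My plan is to start with $X_0 := Y$, which satisfies the size and $3$-separating conditions since $|Y|\ge 4$ and $(Y,Z\cup d')$ is a $3$-separation of $M\ba d$, and to shrink $X$ one element at a time, preserving that $X \subseteq Y$ is $3$-separating in $M\ba d$ with $|X|\ge 4$. The iteration should terminate either when every remaining $x \in X$ satisfies conditions \textup{(I)}, \textup{(II)}, and \textup{(III)} of outcome \textup{(b)}, or when the shrinking uncovers a $5$-element flan of $M\ba d$ inside $Y$ that is blocked by $d$, at which point \cref{flancorollary} delivers outcome \textup{(a)}.

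First I would apply \cref{doublylabel} to the cyclic $3$-separation $(Y,\{d'\},Z)$ of $M\ba d$. Using that $M\ba d\ba d'$ has an $N$-minor with $|Y\cap E(N)|\le 1$, this yields that every element of $Y - \cocl_{M\ba d}(Z)$ is $N$-deletable in $M\ba d$, and that at most one element of $\cocl_{M\ba d}(Y)-d'$ fails to be $N$-contractible. Combining this with the dual of \cref{doublylabel} applied through a vertical $3$-separation of $M\ba d$ extracted via \cref{openVertSep} whenever $\si(M\ba d/y)$ is not $3$-connected gives condition \textup{(III)} for all but a bounded number of elements of $Y$; these exceptional elements become the first candidates for removal from $X$.

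Next, for each $x \in X$ violating \textup{(I)} or \textup{(II)}, the element $x$ is either in a triangle of $M\ba d$ (forcing a parallel pair in $M\ba d/x$), in a triad of $M\ba d$ (forcing a series pair in $M\ba d\ba x$), or exposes a vertical or cyclic $3$-separation of $M\ba d$. In the last case I would uncross with $(Y,Z\cup d')$ via \cref{onetrick} to replace $X$ by a smaller $3$-separating subset of $X - x$; the $N$-labels can be transferred via \cref{freeswitch} and \cref{m2.7}, and the hypothesis $|E(N)|\ge 4$ ensures the $N$-minor survives. When \textup{(I)} or \textup{(II)} fails through a triangle or triad, the \unfortunate\ hypothesis and \cref{basicunfortunate} place each such triangle and triad entirely inside $Y$. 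The iteration terminates either with an $X$ satisfying all of \textup{(I)}--\textup{(III)} (giving outcome \textup{(b)}) or with a residual $X$ carrying an unavoidable web of triangles and triads of $M\ba d$.

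In the second alternative, orthogonality forces the residual structure into one of three configurations: a $4$-element cosegment inside $Y$, ruled out by \cref{coseg}; a pair of triads meeting at a common element together with the accompanying $4$-element circuits as in \cref{prespecifictriads}, ruled out by \cref{specifictriads}; or a flan $F$ of $M\ba d$ contained in $Y$ of length at least five whose triads are blocked by $d$. In that last case we inherit from the initial $N$-minor of $M\ba d\ba d'$ that $M\ba d\ba f_5$ has an $N$-minor with $|\{f_1,\dotsc,f_4\}\cap E(N)|\le 1$, so \cref{flancorollary} applies and yields that $F\cup d$ is a \twisted, an \pspider, or a \tvamoslike\ of $M^*$; taking $X$ to be any four elements of $F$ and $c$ the fifth yields outcome \textup{(a)}. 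The main obstacle will be the bookkeeping during shrinking: tracking which element of $\cocl_{M\ba d}(Z)$ plays the role of $d'$ as $X$ changes, ensuring the $N$-minor and the $|X|\ge 4$ threshold are simultaneously preserved, and verifying that the triangles and triads in the residual case actually align as a single flan ordering rather than as an ad hoc overlap requiring further subcases.
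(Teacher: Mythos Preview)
Your overall strategy matches the paper's: pick a minimal $3$-separating $X\subseteq Y$ with $|X|\ge 4$ (equivalent to your iterative shrinking), and show that any obstruction to \textup{(b)} forces either the two-triads configuration of \cref{specifictriads} or a flan handled by \cref{flancorollary}. But there are genuine gaps in the execution.

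First, the uncrossing step is mis-stated. When $\co(M\ba d\ba x)$ or $M\ba d/x$ fails to be $3$-connected, you obtain a new cyclic or vertical $3$-separation $(P,\{x\},Q)$ of $M\ba d$. The uncrossing that drives the argument is between this new separation and $X$ (equivalently its complement $W$), not between it and $(Y,Z\cup d')$. This is what yields that $X\cap P$, $(X\cap P)\cup x$, $X\cap Q$, $(X\cap Q)\cup x$ are all $3$-separating, and minimality of $X$ then pins down their sizes exactly.

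Second, the residual analysis is where the real work lies, and ``orthogonality forces one of three configurations'' is too coarse. When \textup{(I)} fails, the paper's sublemmas show via the uncrossing above and minimality that $|X\cap P|=|X\cap Q|=2$, that $X$ is closed in $M\ba d$, and that $X$ contains no $4$-circuits; only then do the hypotheses of \cref{specifictriads} hold. When \textup{(II)} fails, minimality forces $|X|=4$ with $X-x$ a triad and $X$ a circuit, and the triad through $x$ (which exists because \textup{(I)} has already been secured) supplies the fifth flan element. Neither of these reductions follows from orthogonality alone.

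Third, your flan step assumes $M\ba d\ba f_5$ has an $N$-minor with $|\{f_1,\dotsc,f_4\}\cap E(N)|\le 1$, but this is only guaranteed when $f_5\in Y\cup d'$. If $f_5\in Z$, the paper argues separately: it shows the flan is maximal (via \cref{presingle}), establishes that $M/f_i/f_{i'}$ has an $N$-minor for all distinct $i,i'\in\seq{3}$, and then invokes hypothesis~(b) of \cref{flan5structure} rather than \cref{flancorollary}. Also, ``any four elements of $F$'' is wrong: $X$ must be $\{f_1,f_2,f_3,f_4\}$ specifically, with $c=f_5$.
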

\begin{proof}
    Choose $X \subseteq Y$ that is minimal with respect to 
    \ref{setup}.
    Let $W = E(M \ba d) -X$.
    Suppose that \cref{problemseps} does not hold; then, it remains to show that \cref{nonproblemseps} holds.
    By \cref{coseg}, we may 
    assume that $X$ is not a cosegment of $M \ba d$.

    \begin{sublemma}
        \label{dli}
        Every element in $Y \cup d'$ is $N$-deletable in $M \ba d$, and
        every element in $X$ is doubly $N$-labelled in $M \ba d$.
    \end{sublemma}
    \begin{slproof}
      If there is some element $x \in X \cap \cocl_{M \ba d}(Z)$, then $X-x$ is $3$-separating, by \cref{extendSep}.
      If $|X| > 4$, this contradicts the minimality of $X$.  On the other hand, if $|X| = 4$, then $X-x$ is a triad, since $X-x$ cannot be an \unfortunate\ triangle by \cref{doublylabel}(ii).  But then $X$ is a $4$-element cosegment, contradicting \cref{coseg}.

      Now we may assume that $Z \cup d'$ is coclosed in $M \ba d$.
      By \cref{doublylabel}(i), every element in $Y$ is $N$-deletable, while $d'$ is $N$-deletable by hypothesis.
      If there is some element $x \in X$ that is not $N$-contractible, then $x \in \cl(Z)$ 
      by \cref{doublylabel}(ii).  Then,
      the minimality of $X$ implies that $|X|=4$.

    Since $X-x$ is not an \unfortunate\ triangle, $X-x$ is a triad, and $X$ is a circuit.
    Moreover, $(X-x, \{x\},W)$ is a vertical $3$-separation, so
    $\co(M \ba d \ba x)$ is $3$-connected by Bixby's Lemma.
    Since $M$ has no $N$-detachable pairs, $x$ is in a triad of $M \ba d$ that meets $X-x$ and $W$.
    Let this triad be $\{x',x,w\}$ where $x' \in X-x$.
    Since $M \ba d \ba x'$ has an $N$-minor, and $\{x,w\}$ is a series pair in this matroid, up to an $N$-label switch the matroid $M \ba d / x$ has an $N$-minor after all, 
    thus completing the proof of \cref{dli}.
    \end{slproof}

    Note, in particular, that no triangle meets $X$. 
    
    \begin{sublemma}
      \label{dlflan}
      If $|X|=4$ and $X \cup f_5$ is a flan for some $f_5 \in W$, with flan ordering $(f_1,f_2,f_3,f_4,f_5)$ for some labelling $\{f_1,f_2,f_3,f_4\}$ of $X$,  
      then either $M$ has an $N$-detachable pair, or \cref{problemseps} holds.
    \end{sublemma}
    \begin{slproof}
      Suppose $f_5$ is $N$-deletable in $M \ba d$.
      Then, by \cref{flancorollary}, either $M$ has an $N$-detachable pair, or \cref{problemseps} holds.
      So we may assume that $f_5$ is not $N$-deletable in $M \ba d$.
      By \cref{dli}, $f_5 \in Z$.
      Now $(Y \cup f_5, \{d'\}, Z-f_5)$ is a 
      path of $3$-separations in $M \ba d$, by \cref{extendSep}.
      By \cref{swapSepSides,exactSeps}, $d' \in \cocl_{M \ba d}(Z-f_5)$.
      Moreover, $Z-f_5$ contains a circuit, since $Z$ contains a circuit and $f_5 \notin \cl(Z-f_5)$, so this path of $3$-separations is a cyclic $3$-separation, and $|(Y \cup f_5) \cap E(N)| \le 1$, by \cref{m2.7} and since $|Y \cap E(N)| \le 1$ and $|E(N)| \ge 4$.

      Suppose there is some $f_6 \in \cl(X \cup f_5) \cap (W-f_5)$ so that $X \cup \{f_5,f_6\}$ is a flan.
      Now $(Y \cup \{f_5,f_6\}, \{d'\}, Z-\{f_5,f_6\})$ is a path of $3$-separations 
      where $d'$ is a coguts element,
      using a similar argument as in the previous paragraph.
      To show this is a cyclic $3$-separation, we now require only that $r^*_{M \ba d}(Z-\{f_5,f_6\}) \ge 3$.
      Suppose not.
      Since $M \ba d \ba d'$ has an $N$-minor with $|(Y \cup f_5) \cap E(N)| \le 1$, \
      \cref{m2.7} implies that $|(Y \cup \{f_5,f_6\}) \cap E(N)| \le 1$.
      But now $r^*_{M \ba d \ba d'}(Z-\{f_5,f_6\}) \le 1$; a contradiction.
      By \cref{doublylabel}(ii), 
      since $f_5$ is not $N$-deletable we have $f_5 \in \cocl(Z - \{f_5,f_6\})$.
      But $f_6 \in \cl(Z - \{f_5,f_6\})$ and $d' \in \cocl(Z - \{f_5,f_6\})$, contradicting \cref{presingle}.
      So $X \cup f_5$ is a maximal flan.

      Note that $M \ba d \ba f_3$ has an $N$-minor, by \cref{dli}. 
      If $d$ fully blocks $X \cup f_5$, 
      then,
      by \cref{5flan}, $M$ has an $N$-detachable pair.
      Towards an application of \cref{flan5structure}, we show that $M /f_i/f_{i'}$ has an $N$-minor for all distinct $i,i' \in \seq{3}$.
      Let $i \in \{1,2\}$.
      By \cref{flanend,dli}, $M\ba d/f_i$ is $3$-connected and has an $N$-minor.
      Now $((Y-f_i) \cup f_5, \{d'\},Z-f_5)$ is a cyclic $3$-separation in $M \ba d/f_i$.  Since $\{f_3,f_4,f_5\}$ is a triad in $M \ba d$, we have $f_3 \notin \cl(Z-f_5)$, so $M \ba d/f_i/f_3$ has an $N$-minor by \cref{doublylabel}(ii).
      Now, $\{f_5,d'\} \subseteq \cocl_{M \ba d/f_i}(Z-f_5)$, so no element in $(Y-f_i) \cup f_5$ is also in $\cl_{M \ba d/f_i}(Z-f_5)$ by \cref{presingle}.
      Hence $M \ba d/f_1/f_2$ also has an $N$-minor by \cref{doublylabel}(ii).
      Now, by \cref{flan5structure}, either $M$ has an $N$-detachable pair or \cref{problemseps} holds, thus completing the proof. 
    \end{slproof}

    Next we prove that \ref{ond} holds for each $x \in X$.
    Towards a contradiction,
    let $x$ be an element of $X$ such that $\co(M \ba d \ba x)$ is not $3$-connected, and let $(P, \{x\}, Q)$ be a cyclic $3$-separation of $M \ba d$.
    \begin{sublemma}
        \label{ondi}
        $W \cap P \neq \emptyset$ and $W \cap Q \neq \emptyset$.
    \end{sublemma}
    \begin{slproof}
    Suppose that $W \cap Q = \emptyset$.
    Then $Q \cup x \subseteq X$ and $|Q| \ge 3$.
    But $Q$ and $Q \cup x$ are $3$-separating, so the minimality of $X$ implies that $X =Q \cup x$ and $|Q|=3$.  Since $Q$ contains a circuit, $Q$ is a triangle of $M\ba d$, and hence of $M$.  But, 
    by \cref{dli},
    $Q$ is not \unfortunate; a contradiction.  So $W \cap Q$ and, by symmetry, $W \cap P$ are non-empty.
    \end{slproof}

    \begin{sublemma}
        \label{ondii}
        Up to swapping $P$ and $Q$, 
        $|X \cap Q| = 2$ and
        $|W \cap P| \ge 2$.
    \end{sublemma}
    \begin{slproof}
      Since $|W| \ge 3$, we may assume that $|W \cap P| \ge 2$.
    By uncrossing, $X \cap Q$ and $(X \cap Q) \cup x$ are $3$-separating in $M \ba d$.
    If $|X \cap Q| \le 1$, then $|W \cap Q| \ge 2$, in which case $X \cap P$ and $(X \cap P) \cup x$ are also $3$-separating in $M \ba d$, by uncrossing.
    By the minimality of $X$, it follows that $|X|=4$, so
    either $X \cap Q = \emptyset$ and $|X\cap P| = 3$, or $|X \cap Q| = 1$ and $|X \cap P| = 2$.  In the first case, $X-x$ is a triad, since it cannot be an \unfortunate\ triangle, so $X$ is a $4$-element cosegment, contradicting \cref{coseg}.  In the latter case, \cref{ondii} holds after swapping $P$ and $Q$. 
    On the other hand, if $|X \cap Q| > 2$, then the minimality of $X$ implies that $X \cap P = \emptyset$.
    But then $X-x$ is a triad, so $X$ is a $4$-element cosegment, contradicting \cref{coseg}.
    \end{slproof}

    Now, note that if $|W \cap Q| = 1$, then $Q$ is a triangle in $M \ba d$, 
    but $Q$ is not an \unfortunate\ triangle since, by \cref{dli}, it contains an $N$-contractible element; a contradiction.
    So $|W \cap Q| \ge 2$.

    \begin{sublemma}
        \label{ondiii}
        $|X \cap P| = 2$.
    \end{sublemma}
    \begin{slproof}
    By uncrossing, $X \cap P$ and $(X \cap P) \cup x$ are $3$-separating.
    If $|X \cap P| > 2$, then this contradicts the minimality of $X$.
    So assume that $X \cap P = \{t\}$, say.
    Now $X-t$ is a triad, and
    $t \in \clstar(X-t)$.
    If $t \in \cocl(X-t)$, then $X$ is a $4$-element cosegment, contradicting \cref{coseg}.
    So $t \in \cl(X-t)$.
    By the dual of \cref{r3cocircsi}, $\co(M \ba d \ba t)$ is $3$-connected, so, as $M$ has no $N$-detachable pairs, $t$ is in a triad that, by orthogonality, meets $X-t$. If this triad does not contain $x$, then, by the dual of \cref{r3cocircsi} again, $\co(M \ba d \ba x)$ is $3$-connected; a contradiction.
    Let $f_5$ be the element of the triad in $W$, and let $X \cap Q = \{f_1,f_2\}$. 
    Now $X$ is contained in a $5$-element flan with ordering $(f_1,f_2,x,t,f_5)$.
    Thus, by \cref{dlflan}, either $M$ has an $N$-detachable pair or \cref{problemseps} holds; a contradiction.
    \end{slproof}

    \begin{sublemma}
        \label{ondiv}
        $X$ is closed in $M\ba d$.
    \end{sublemma}
    \begin{slproof}
        Suppose $c \in \cl(X) - X$.  We may assume that $c \in P$. 
        Since $|W \cap Q| \ge 2$, 
        both $X \cap P$ and
        $(X \cap P) \cup c$ are $3$-separating, by uncrossing. So
        $c \in \cl(X \cap P)$, and $(X \cap P) \cup c$ is a triangle.  Since this triangle contains an $N$-contractible element, it is not \unfortunate, which is contradictory.
    \end{slproof}

    \begin{sublemma}
        \label{ondv}
        $X$ contains no $4$-element circuits.
    \end{sublemma}
    \begin{slproof}
        Let $X \cap P = \{p_1,p_2\}$ and $X \cap Q = \{q_1,q_2\}$.
        Suppose $X$ has a $4$-element circuit.  Either this circuit contains $x$ or it does not.  Suppose that it does: without loss of generality, let $\{p_1,p_2,x,q_1\}$, be this circuit.  Since $\{p_1,p_2,x\}$ is a triad, $\{p_1,p_2,x,q_1\}$ is $3$-separating, contradicting the minimality of $X$.
        Now we may assume there is no $4$-element circuit in $X$ containing $x$.
        Thus $r(\{p_1,p_2,x,q_1,q_2\})=4$, and it follows, 
        by \cref{swapSepSides}, that $x \in \cocl(W)$, so $X-x$ is $3$-separating by \cref{extendSep},
        again contradicting the minimality of $X$.
    \end{slproof}

    Now, since \cref{ondii,ondiii,ondiv,ondv} hold, we can apply \cref{specifictriads} to deduce that $M$ has an $N$-detachable pair; a contradiction.  This proves that each $x \in X$ satisfies \ref{ond}.
    Recall that each $x \in X$ satisfies \cref{onddl} by \cref{dli}.

    It remains to consider \ref{com}.
    Suppose $M \ba d / x$ is not $3$-connected for some $x \in X$.
    Since $x$ is not in a triangle, $\si(M \ba d/x)$ is not $3$-connected, so $M \ba d$ has a vertical $3$-separation $(P, \{x\}, Q)$.
    We may assume, without loss of generality, that $|W \cap P| \ge 2$.
    Thus, by uncrossing, both $X \cap Q$ and $(X \cap Q) \cup x$ are $3$-separating.
    By the minimality of $X$, we have $|X \cap Q| \le 3$, and if $|X \cap Q|=3$, then $X \cap P = \emptyset$.
    If $|X \cap Q| = 2$, then $(X \cap Q) \cup x$ is a triangle or a triad, but as $x \in \cl(Q)$ and $X$ contains no triangles, 
    this leads to a contradiction.

    Suppose $|X \cap Q| \le 1$. Then $|W \cap Q| \ge 2$, in which case $X \cap P$ and $(X \cap P) \cup x$ are $3$-separating, by uncrossing.
    Now $|X\cap P| \ge 2$, but, by the minimality of $X$, $|X\cap P| \le 3$ and if $|X\cap P| = 3$ then $X \cap Q=\emptyset$.
    Moreover, 
    $|X \cap P| \neq 2$ since $x \in \cl(X \cap P)$ and $X$ does not contain any triangles.
    It follows that 
    $X \cap Q = \emptyset$ and $|X \cap P| = 3$.
    
    Now $\{|X \cap P|, |X \cap Q|\} = \{0,3\}$, $X-x$ is a triad, and $X$ is a circuit.
    Since $\co(M \ba d \ba x)$ is $3$-connected, but $M \ba d \ba x$ is not, $x$ is in a triad $T^*$ of $M \ba d$ that meets both $X-x$ and $W$, by orthogonality.
    Let $T^* \cap W=\{f_5\}$, and observe that $X \cup f_5$ is a $5$-element flan of $M \ba d$.
    By \cref{dlflan}, either $M$ has an $N$-detachable pair or \cref{problemseps} holds; a contradiction.
    So each $x \in X$ also satisfies \ref{com}, as required.  
    This completes the proof of \cref{foundation}.
\end{proof}

\section*{Acknowledgements}
    We thank the referees for their careful reading of the paper.

\bibliographystyle{abbrv}
\bibliography{lib}

\end{document}